\definecolor{mygreen}{RGB}{28,172,0} %
\definecolor{mylilas}{RGB}{170,55,241}
\date{September 15, 2025}
\titleformat*{\section}{\large\bfseries}
\titleformat*{\subsection}{\bfseries}
\title{Constructive proofs for some semilinear PDEs on $H^2(e^{|x|^2/4},\mathbb{R}^d)$}
\author{Maxime Breden\thanks{CMAP, CNRS, \'Ecole polytechnique, Institut Polytechnique de
Paris, 91120 Palaiseau, France. \href{mailto:maxime.breden@polytechnique.edu}{maxime.breden@polytechnique.edu}, \href{mailto:hugo.chu@polytechnique.edu}{hugo.chu@polytechnique.edu}} $\quad$ Hugo Chu\footnotemark[1]$\;$\thanks{Department of Mathematics, Imperial College London, London SW7 2AZ, United Kingdom.} }
\theoremstyle{definition}
\newtheorem{thm}{Theorem}
\newtheorem{defn}[thm]{Definition}
\newtheorem{notation}[thm]{Notation}
\newtheorem{cor}[thm]{Corollary}
\newtheorem{rmk}[thm]{Remark}
\newtheorem{lem}[thm]{Lemma}
\newtheorem{ex}[thm]{Example}
\renewcommand{\d}{\mathrm{d}}
\newcommand{\diff}[2]{\frac{\mathrm{d}#1}{\mathrm{d}#2}}
\newcommand{\pdiff}[2]{\frac{\partial#1}{\partial#2}}
\newcommand{\R}{\mathbb{R}}
\newcommand{\OP}{P_{\scriptscriptstyle\infty}}
\newcommand{\ih}{h_{\scriptscriptstyle\infty}}
\newcommand{\iH}{\mathcal{H}_{\scriptscriptstyle\infty}}
\newcommand{\Sp}{\mathbb{S}}
\newcommand{\cL}{\mathcal{L}}
\renewcommand{\d}{\mathrm{d}}
\newcommand{\N}{\mathbb{N}}
\newcommand{\bu}{\bar{u}}
\begin{document}

\maketitle

\begin{abstract}
We develop computer-assisted tools to study semilinear equations of the form
\begin{equation*}
-\Delta u -\frac{x}{2}\cdot \nabla{u}= f(x,u,\nabla u) ,\quad x\in\mathbb{R}^d.
\end{equation*}
Such equations appear naturally in several contexts, and in particular when looking for self-similar solutions of parabolic PDEs. We develop a general methodology, allowing us not only to prove the existence of solutions, but also to describe them very precisely. We introduce a spectral approach based on an eigenbasis of $\mathcal{L}:= -\Delta -\frac{x}{2}\cdot \nabla$ in spherical coordinates, together with a quadrature rule allowing to deal with nonlinearities, in order to get accurate approximate solutions. We then use a Newton--Kantorovich argument, in an appropriate weighted Sobolev space, to prove the existence of a nearby exact solution.
We apply our approach to nonlinear heat equations, to nonlinear Schr\"odinger equations and to a generalised viscous Burgers equation, and obtain both radial and non-radial self-similar profiles.
\end{abstract}

\begin{center}
{\bf \small Keywords} \\ \vspace{.05cm}
{ \small Self-similar solutions $\cdot$ Elliptic PDEs $\cdot$ Computer-assisted proofs $\cdot$ Weighted Sobolev spaces $\cdot$ Unbounded domains}
\end{center}

\begin{center}
{\bf \small Mathematics Subject Classification (2020)}  \\ \vspace{.05cm}
{\small  35C06 $\cdot$ 35J61 $\cdot$  65G20 $\cdot$ 65N15 } 
\end{center}

\section{Introduction}

Partial differential equations (PDEs) are fundamental in the modelling of many physical and biological systems, and the computation of their solutions is often well understood on bounded domains.
However, certain PDEs can lose some key features such as particular solutions or conservation laws when restricted to a bounded domain. Furthermore, many problems, such as the ones studied in this work, are more natural to consider on an unbounded domain, either because their structure leads to simpler proofs or because they automatically give rise to localised solutions whose decay is intrinsically prescribed by the equation.

Among those, diffusion problems (both linear and nonlinear), many of which are natural to consider on $\R^d$, have a simple treatment on bounded domains based on standard Sobolev spaces but which fails to extend directly on unbounded domains. Many alternative techniques have been developed or applied to the study of diffusions on $\R^d$ such as variational formulations~\cite{Escobedo1987VariationalEquation, Kavian1994Self-similarEquation, Weissler1986RapidlyEquations}, Lyapunov (including energy/entropy) techniques and shooting methods for radial solutions~\cite{Brezis1986AAbsorption, Peletier1986On0}. In particular, one popular approach is to recover some estimates which hold in the bounded case such as the Poincaré inequality by working on weighted Sobolev spaces instead.

Slightly more recently, a new family of tools has emerged, namely computer-assisted proofs (CAPs), which are complementary to the classical techniques. One of the benefits of CAPs is that the existence results they provide always come hand in hand with a quantitative description of the obtained solutions. Another advantage is that CAPs typically generalise in a straightforward way to systems, which is not necessarily the case for some of the aforementioned techniques.
Given a PDE, abstractly written as $F(u)=0$, and a numerically obtained approximate solution $\bu$, a common feature of many CAPs is to consider an associated Newton-like operator of the form
\begin{align}
\label{eqn:fixed_point}
T(u) = u - AF(u),
\end{align}
where $A$ is a well-chosen approximation of $DF(\bu)^{-1}$, and then to prove that $T$ is a contraction on a small neighbourhood of $\bu$, in a suitable norm~\cite{vandenBerg2015RigorousDynamics}.

Following the seminal papers of Nakao~\cite{Nakao1988AProblems} and Plum~\cite{Plum1992ExplicitProblems}, many authors contributed to the further development and popularisation of computer-assisted proofs for elliptic PDEs (see e.g.~\cite{Arioli2010Computer-AssistedEquation,Day2007ValidatedPDEs,Gomez-Serrano2019Computer-assistedSurvey,Oishi1995NumericalEquations,Takayasu2013VerifiedDomains,vandenBerg2017ValidationProblem,Wanner2017Computer-assistedModel,Yamamoto1998ATheorem,Zgliczynski2001RigorousEquation}), but the vast majority of these works can only be applied to problems on bounded domains. Until recently, the main exception was the technique developed by Plum, which takes $A$ equal to $DF(\bu)^{-1}$ and relies on eigenvalues bounds obtained via the so-called homotopy method in order to control $\Vert DF(\bu)^{-1}\Vert$. This approach can handle a variety of problems on unbounded domains, see~\cite{Wunderlich2022Computer-assistedObstacle} for a recent example and~\cite[Part II]{Nakao2019NumericalEquations} for a more general presentation. Regarding other CAP techniques on unbounded domains, but for more specific problems, we also mention~\cite{Nagatou2002VerifiedOscillators} which studies the spectrum of coupled harmonic oscillators, and~\cite{Nagatou2012EigenvalueOperators} which deals with eigenvalue exclusion for localised perturbations of a one-dimensional periodic Schr\"odinger operator. Lately, other general CAP methodologies were developed for (systems of) PDEs on unbounded domains, first specifically for radial solutions~\cite{vandenBerg2015RigorousProblem,vandenBerg2023ConstructiveRd}, and then for more general solutions in~\cite{Cadiot2024RigorousMethods,Cadiot2025StationaryExistence}. In these recent approaches, one goes back to a problem on a bounded domain, either by compactifying the domain, or by dealing separately with the solutions at infinity, or by introducing a periodised problem whose solutions approximate those of the original problem. Such techniques are well-suited to study localised solutions of equations having only bounded coefficients. 

In this work, we study elliptic PDEs of the form
\begin{align}
\label{eqn:general_elliptic_prblm}
    \cL u :=-\Delta u -\frac{x}{2}\cdot \nabla{u}= f(x,u,\nabla u) ,\quad x\in\R^d,
\end{align}
and develop new computer-assisted tools for such equations. Equations of the form~\eqref{eqn:general_elliptic_prblm} have already been studied extensively, as they naturally appear in different contexts such as the ones we present below, or when studying self-similarity in various problems (see \cite{Aguirre1990Self-SimilarProblems, Benachour2004AsymptoticEquations, Corrias2014ExistencePlane} or~\cite{Jia2014Local-in-spaceSolutions} in the case of the Navier--Stokes equations).

As a first example, consider the nonlinear heat equation
\begin{align}
\label{eqn:semiheat}
    \partial_t v = \Delta v -\varepsilon |v|^{p-1}v,  \qquad \varepsilon =\pm 1.
\end{align}
If $u$ solves~\eqref{eqn:general_elliptic_prblm} with $f(x,u,\nabla u) = f(u) = u/(p-1) -\varepsilon |u|^{p-1}u$, i.e.
\begin{equation}\label{eqn:intro-heat}
    \mathcal{L}u = \frac{u}{p-1} -\varepsilon |u|^{p-1}u,
\end{equation}
then $v(t, x)= t^{-1/(p-1)}u(x/\sqrt{t})$ is a self-similar solution of~\eqref{eqn:semiheat}. 
One typically seeks ``physical'' solutions, having Gaussian decay or as critical points of some energy functional. In the radial case, a thorough description of solutions can then be achieved via shooting or ODE methods (see~\cite{Brezis1986AAbsorption, Peletier1986On0, Yanagida1996UniquenessEquation} and references therein). Alternatively, a variational approach~\cite{Escobedo1987VariationalEquation, Weissler1986RapidlyEquations} identifies such solutions by working directly on weighted Sobolev spaces. 

As a second example, notice that, if $u(x) = e^{-|x|^2/8}\varphi(x)$ for an arbitrary function $\varphi$, then
\begin{align}\label{eqn:unknown-change}
    \cL u = e^{-|x|^2/8} \left(-\Delta \varphi + \left(\frac{|x|^2}{16}+\frac{d}{4}\right)\varphi \right),
\end{align}
therefore equation~\eqref{eqn:general_elliptic_prblm} is also connected to Schr\"odinger equations. Indeed, consider for instance the critical nonlinear Schr\"{o}dinger equation
\begin{equation}
    i\partial_t v + \Delta v +\varepsilon|v|^{4/d}v = 0, \label{eqn:nonlin-schrodinger}
\end{equation}
for $\varepsilon = \pm 1$. It is established (also via variational methods) in~\cite{Kavian1994Self-similarEquation} that~\eqref{eqn:nonlin-schrodinger} has self-similar solutions of the form
\begin{equation*}
    v(t, x) = (1+t^2)^{-d/4}\exp\left(\frac{it|x|^2}{4(1+t^2)}\right)e^{i\omega\arctan{t}}\varphi\left(\frac{\sqrt{2}x}{(1+t^2)^{1/2}}\right),
\end{equation*}
where $\omega \in \R$ and $\varphi : \R^d \longrightarrow \mathbb{C}$ solves
\begin{equation}\label{eqn:phi-schrodinger-intro}
-\Delta \varphi +\frac{\vert x \vert^2}{16}\varphi +\frac{\omega}{2} \varphi = \frac{\varepsilon}{2}|\varphi|^{4/d}\varphi.
\end{equation}
Using the aforementioned change of unknown $\varphi(x) = e^{|x|^2/8}u(x)$, we get back to equation~\eqref{eqn:general_elliptic_prblm} for $u$, with 
\begin{align*}
    f(x,u,\nabla u) = f(x,u) = \left(\frac{d}{4}-\frac{\omega}{2}\right) u +\frac{\varepsilon}{2}e^{|x|^2/(2d)}|u|^{4/d}u.
\end{align*}

In this work, we develop a computer-assisted approach based on~\eqref{eqn:fixed_point} to study both radial and non-radial solutions of~\eqref{eqn:general_elliptic_prblm}, which provides a precise and quantitative description of self-similar solutions of equations~\eqref{eqn:semiheat} and~\eqref{eqn:nonlin-schrodinger}. Defining
$$H^n(\mu) = \left\{f \in L^2(\mu) \mid \partial_{\beta} f \in L^2(\mu) \text{ for all } \beta \in \N^d, |\beta|\leq n\right\}, \qquad \mu(x) = \frac{\Gamma(d/2)}{(2\sqrt{\pi})^d} e^{|x|^2/4},$$
we will rigorously enclose solutions in $H^2(\mu)$ with respect to the norm $\|\cdot\|_{H^2(\mu)}:= \|\cL \cdot \|_{L^2(\mu)}$, where $L^2(\mu)$ is equipped with the usual inner product
$$\langle u,v \rangle = \int_{\R^d} u(x) v(x) \mu(x) \d x,$$
and where $\mathcal{L}$ will be shown to be a positive $L^2(\mu)$ self-adjoint operator. Here are examples of results that can be obtained using this approach. 

\begin{thm}
\label{thm:intro_frac}
Let $d=2$, $p=5/3$, $\varepsilon=1$ and $\bar{u}:\R^2\to\R$ be the radial function whose radial profile restricted to the interval $[0,4]$ is represented in Figure~\ref{fig:frac-sol}, and whose precise description is available at~\cite{Chu2024CodeOn}. There exists a positive radial solution $u^{\star} \in H^2(\mu)$ to Eq.~\eqref{eqn:intro-heat} such that
    $$\|u^{\star} - \bar{u}\|_{H^2(\mu)}\leq 1.6\times 10^{-23}.$$
\end{thm}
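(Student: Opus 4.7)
The plan is to set up a Newton–Kantorovich argument around a precomputed numerical approximation $\bar{u}$, using the abstract fixed-point operator $T(u)=u-AF(u)$ from~\eqref{eqn:fixed_point} with
$$F(u) \deq \cL u - \frac{u}{p-1} + |u|^{p-1}u,$$
viewed as a map $H^2(\mu)\to L^2(\mu)$. Since the $H^2(\mu)$-norm is defined as $\|\cL\cdot\|_{L^2(\mu)}$, the operator $\cL\colon H^2(\mu)\to L^2(\mu)$ is an isometry, which suggests choosing the approximate inverse $A$ as a perturbation of $\cL^{-1}$. Concretely, I would decompose $\bar u$ in the spectral eigenbasis of $\cL$ (in the radial case, this reduces to the generalised Laguerre polynomials in the variable $|x|^2/4$): on the finite block of modes used to represent $\bar u$, $A$ is the numerical inverse of a truncated matrix representation of $DF(\bar u)$, while on the tail it coincides with $\cL^{-1}$, which is diagonal and exactly computable. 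The approximation $\bar u$ itself is obtained by a finite-dimensional Newton iteration, evaluating the non-polynomial nonlinearity $|u|^{p-1}u$ (here $p=5/3$) via a Gauss-type quadrature rule adapted to the weight $\mu$.

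The next step is to certify the contraction of $T$ on a closed ball $B(\bar u,r)\subset H^2(\mu)$ via the standard three bounds: a residual bound $Y\geq \|AF(\bar u)\|_{H^2(\mu)}$, an operator-norm bound $Z_1\geq \|I-A\,DF(\bar u)\|_{H^2(\mu)\to H^2(\mu)}$, and a local Lipschitz bound $Z_2(r)$ controlling $\|A(DF(u)-DF(\bar u))\|$ for $u\in B(\bar u,r)$. If the radii polynomial $Y+(Z_1-1)r+Z_2(r)r$ is negative at some $r>0$, the Banach fixed-point theorem yields a unique $u^\star\in B(\bar u,r)$ with $F(u^\star)=0$, and the target bound $1.6\times 10^{-23}$ would be the smallest admissible $r$, obtained under interval arithmetic. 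Positivity of $u^\star$ would follow from positivity of $\bar u$ on the explicit grid used to plot Figure~\ref{fig:frac-sol} and smallness of $r$, combined with a Sobolev-type embedding that turns the $H^2(\mu)$-closeness into a pointwise (weighted) $L^\infty$ estimate in dimension $d=2$.

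Two steps look genuinely delicate. The first is evaluating the nonlinear residual $F(\bar u)$ and the action of $DF(\bar u)$ on high modes: because $|u|^{p-1}u$ with $p=5/3$ is not polynomial, the coefficients of $|\bar u|^{2/3}\bar u$ in the Laguerre basis cannot be computed symbolically, so the quadrature error has to be enclosed rigorously in $L^2(\mu)$ using a priori decay estimates for the tails of $\bar u$ and its derivatives with respect to the weighted norm. The second is the $Z_1$-bound: controlling $I-A\,DF(\bar u)$ in the graph-norm $\|\cL\cdot\|_{L^2(\mu)}$ reduces to estimating the multiplication operator by $\frac{1}{p-1}-p|\bar u|^{p-1}$ as a perturbation of $\cL$, and one must show that its tail norm (outside the truncation block handled by the numerical inverse) is small. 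This in turn relies on commutator estimates between multiplication and $\cL$ on $H^2(\mu)$, which I expect to be the main analytic obstacle; once these estimates are in place, the remaining computations are finite-dimensional and handled by interval arithmetic.
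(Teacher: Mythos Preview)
Your overall Newton--Kantorovich skeleton matches the paper's, but there are two genuine gaps where the paper does something you have not anticipated.

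\textbf{Handling the fractional nonlinearity.} You correctly flag the rigorous quadrature of $|\bar u|^{2/3}\bar u$ as delicate, and propose to enclose the quadrature error using tail decay. The paper sidesteps this entirely: it does \emph{not} take $\bar u$ to be a generic truncated Laguerre expansion, but instead constructs $\bar u = \bar U^3$ with $\bar U(r) = q(r^2/4)e^{-r^2/12}$ for a polynomial $q$. Then $\bar u^{5/3} = \bar U^5$ and $\bar u^{2/3} = \bar U^2$ are again polynomials times Gaussians, so every integral appearing in $Y$, $Z^{ij}$, etc.\ is computed \emph{exactly} by the modified Gauss--Laguerre rule of Lemma~\ref{lem:quad}. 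Without this trick, rigorously enclosing $\|\bar u^{5/3}\|_{L^2(\mu)}$ or $\langle \bar u^{5/3},\hat\psi_m\rangle$ to $10^{-23}$ precision would be far from routine, since $t\mapsto t^{5/3}$ is not analytic at $0$ and the usual quadrature error bounds blow up. This is the main missing idea in your plan. Relatedly, because $u\mapsto u^{2/3}$ is only $2/3$-H\"older and not Lipschitz, the paper replaces your $Z_2(r)$ by a nonpolynomial bound $\mathcal Z(\delta)=Z_1+Z_{5/3}\delta^{2/3}$ and invokes the general Theorem~\ref{thm:fixed-point} rather than the polynomial Corollary~\ref{cor:fixed-point}.

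\textbf{Positivity.} Your proposed argument (positivity of $\bar u$ on a grid, plus $H^2(\mu)\hookrightarrow L^\infty$) does not close. The embedding only gives $\|e^{|x|^2/8}(u^\star-\bar u)\|_\infty \leq C\underline\delta$, so you control $u^\star$ up to an error of size $e^{-|x|^2/8}$; but $\bar u$ itself decays like $e^{-|x|^2/4}$, hence for large $|x|$ the error swamps $\bar u$ and the ball $B(\bar u,\underline\delta)$ contains functions that are eventually negative. The paper makes exactly this observation in Section~\ref{sec:positivity} and instead passes to $\varphi = e^{|x|^2/8}u^\star$, which satisfies $-\Delta\varphi + c\varphi = 0$ with $c(x)=\tfrac{d}{4}+\tfrac{|x|^2}{16}-\tfrac{1}{p-1}+\varepsilon u^\star(|x|)^{p-1}$. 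One checks $c>0$ for $|x|\geq r_0$ (computable from the $L^\infty$ bound on $u^\star$), verifies $\varphi>0$ on $|x|\leq r_0$ by your direct comparison, and then concludes global positivity by the strong maximum principle (Lemma~\ref{lem:max-princ}, Corollary~\ref{cor:positivity}).
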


\begin{figure}[H]
    \centering
    \includegraphics[width=0.6\textwidth]{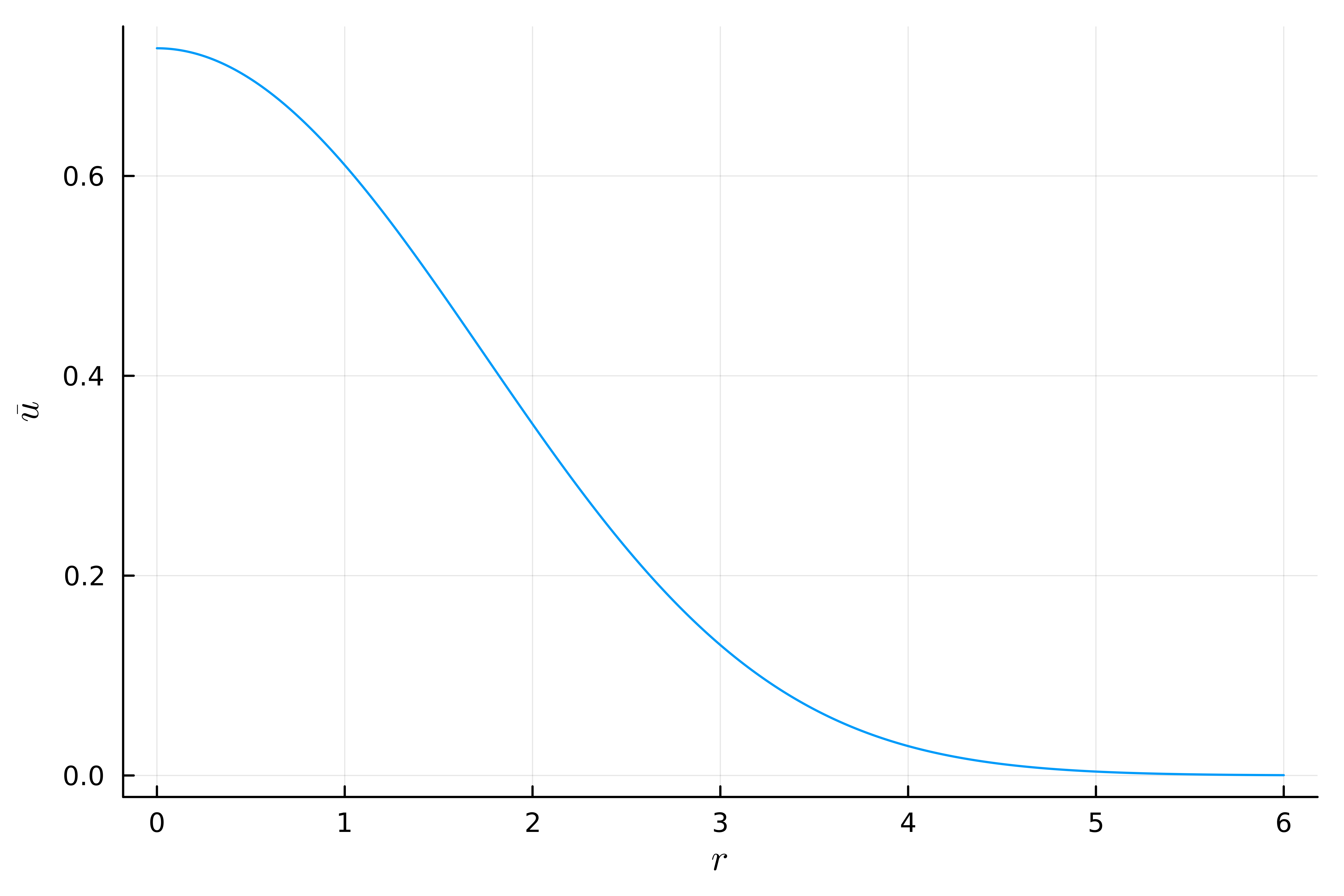}
    \caption{An approximate self-similar radial profile $\bar{u}$ to Eq.~\eqref{eqn:semiheat} with $\varepsilon = +1$, $d=2$ and $p=5/3$.\label{fig:frac-sol}}
\end{figure}

\begin{thm}
\label{thm:intro_asym}
    Let $d=2$, $\varepsilon = -1$, $\omega = -5/2$, and $\bar{\varphi}:\R^2\to\R$ be the function whose restriction to the disk $\{(x, y) \mid x^2+y^2\leq 7^2\}$ is represented in Figure~\ref{fig:asymmetric-schrodinger}, and whose precise description is available at~\cite{Chu2024CodeOn}. There exists a solution $\varphi^{\star}\in H^2(\R^2)$ to Eq.~\eqref{eqn:phi-schrodinger-intro} such that
    $$\|\varphi^{\star} - \bar{\varphi}\|_{H^2(\R^2)}\leq 3.8\times 10^{-8}.$$
\end{thm}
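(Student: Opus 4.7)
The plan is to use the change of unknown $\varphi(x) = e^{|x|^2/8}u(x)$ from~\eqref{eqn:unknown-change} to rewrite~\eqref{eqn:phi-schrodinger-intro}, for $d=2$, $\omega=-5/2$, $\varepsilon=-1$, as an elliptic problem of the form $\cL u = f(x,u)$ on the weighted space $H^2(\mu)$, then to run the Newton--Kantorovich scheme~\eqref{eqn:fixed_point} around a numerical approximation $\bar u \deq e^{-|x|^2/8}\bar\varphi$, and finally to transfer the enclosure back to $\varphi$. With these parameters the equation to solve becomes
\begin{equation*}
F(u) \deq \cL u - \frac{7}{4}u + \frac{1}{2}e^{|x|^2/4}|u|^2 u = 0.
\end{equation*}
The identity~\eqref{eqn:unknown-change} combined with $\mu(x) = \frac{1}{4\pi}e^{|x|^2/4}$ yields
\begin{equation*}
\|\cL u\|_{L^2(\mu)}^2 \;=\; \frac{1}{4\pi}\int_{\R^2}\Bigl|{-\Delta\varphi} + \Bigl(\tfrac{|x|^2}{16}+\tfrac{1}{2}\Bigr)\varphi\Bigr|^2\,\d x,
\end{equation*}
and the right-hand side is equivalent, via standard harmonic-oscillator regularity, to $\|\varphi\|_{H^2(\R^2)}$ on the relevant domain. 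Hence any contraction radius on the $u$-side translates into a comparable bound on $\|\varphi^\star - \bar\varphi\|_{H^2(\R^2)}$.

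First, I would compute a good numerical $\bar u$ by expanding in the Hermite--Laguerre eigenbasis of $\cL$ in polar coordinates (Laguerre-type radial factors tensored with trigonometric angular modes, following the spectral methodology announced in the abstract). Since $\bar\varphi$ is genuinely non-radial, several angular modes are excited, and the numerical coefficients for the chosen truncation are stored in the companion repository~\cite{Chu2024Huggzz/Hermite-Laguerre_proofs}. The operator $A$ is then built by rigorously inverting a finite block of $DF(\bar u)$ on the span of retained modes and extending it by $\cL^{-1}$ on the orthogonal complement, where $\cL^{-1}$ acts diagonally in the chosen basis.

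Next, I would derive the three Newton--Kantorovich bounds: a defect bound $Y_0 \geq \|AF(\bar u)\|_{H^2(\mu)}$, computed rigorously with interval arithmetic using the paper's quadrature rule applied to the cubic nonlinearity $e^{|x|^2/4}|\bar u|^2\bar u$; a linearisation bound $Z_1 \geq \|I - ADF(\bar u)\|$, covering both the finite-dimensional inversion error and the action of the nonlinear multiplier outside the truncation on high Hermite--Laguerre modes; and a Lipschitz bound $Z_2(r)$ satisfying $\|A(DF(u)-DF(\bar u))\| \leq Z_2(r)r$ for $u \in \bar B_{r}(\bar u)$, obtained from a Sobolev-type embedding of $H^2(\mu)$ into a space that controls $|u|^2$ pointwise against the weight. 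Solving $Y_0 + Z_1 r + Z_2(r)r \leq r$ then yields a unique fixed point $u^\star$ of $T$ in $\bar B_{r^\star}(\bar u)$ with $r^\star \leq 3.8\times 10^{-8}$, and $\varphi^\star(x) \deq e^{|x|^2/8}u^\star(x)$ is the desired solution.

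The main obstacle lies in the interplay between the weight $\mu$ and the nonlinearity: the term $\tfrac{1}{2}e^{|x|^2/4}|u|^2u$ grows exponentially in $|x|$ but is meant to live naturally in $L^2(\mu)$ since $\mu \propto e^{|x|^2/4}$, and capturing this cancellation while bounding the coupling with the Hermite--Laguerre basis requires a certified quadrature of mixed Gaussian--polynomial integrands. Moreover, the non-radial setting forces a genuinely two-dimensional truncation indexed by both radial and angular quantum numbers, so the truncated $DF(\bar u)$ is a densely coupled block matrix whose rigorous approximate inversion, together with a sharp control of the infinite-dimensional tails in $Z_1$ and $Z_2$, constitutes the most delicate part of the computation.
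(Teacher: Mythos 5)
Your setup is broadly right: the change of unknown $\varphi = e^{|x|^2/8}u$ does reduce Eq.~\eqref{eqn:phi-schrodinger-intro} with $d=2$, $\omega=-5/2$, $\varepsilon=-1$ to $\cL u - \tfrac{7}{4}u + \tfrac{1}{2}e^{|x|^2/4}|u|^2u = 0$, the Hermite--Laguerre eigenbasis of $\cL$ in polar coordinates is the right one, and the Newton--Kantorovich scheme with certified quadrature is what the paper does. The transfer from $\|u^\star - \bar u\|_{H^2(\mu)}$ to $\|\varphi^\star - \bar\varphi\|_{H^2(\R^2)}$ is also the right final move; the paper makes your phrase ``equivalent via standard harmonic-oscillator regularity'' quantitative through Lemma~\ref{lem:phiH2} and inequality~\eqref{eqn:full-H2-ineq}, leading to $\|\varphi\|_{H^2(\R^2)}^2 \leq \tfrac{7Z}{4}\|u\|_{H^2(\mu)}^2$ for $d=2$ and hence $\eta = (\sqrt{7Z}/2)\,\underline{\delta} = \sqrt{7\pi}\,\underline{\delta} \approx 3.8\times 10^{-8}$.

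There is, however, a genuine gap that would make your scheme fail as written: you do not address the rotational invariance of the problem. Eq.~\eqref{eqn:phi-schrodinger-intro} is radially symmetric, so if $\varphi^\star$ is a non-radial solution then every angular shift $\varphi^\star(\,\cdot\,, \cdot + \vartheta_0)$ is also a solution. Consequently the linearisation $DF(\bar u)$ on all of $H^2(\mu)$ has an (approximate) kernel direction proportional to $\partial_\vartheta\bar u$, the Newton iteration has no locally unique fixed point, and there is no choice of approximate inverse $A$ for which $\|I - ADF(\bar u)\|_{H^2(\mu)\to H^2(\mu)}$ can be made small --- your $Z_1$ bound would not be verifiable. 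The paper explicitly flags this in Section~\ref{sec:heat-asym} and resolves it by carrying out the whole argument inside a symmetry subspace where the angular degeneracy is quotiented out: first the $\cos$-modes subspace $\mathcal{H} = \overline{\mathrm{Span}\{\psi_{k,m} : k,m\geq 0\}}^{H^2}$ (functions even in $\vartheta$), and in the concrete computation the further restriction $\tilde{\mathcal{H}}$ spanned by the $\psi_{2k+1,m}$. Restricting to such an invariant subspace also slightly improves the tail constants since the first eigenvalue of $\cL$ there is strictly larger than $d/2$. Without this step the ``densely coupled block matrix'' you mention is not merely delicate to invert rigorously, it is essentially singular, so adding the symmetry reduction is not optional.
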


\begin{figure}[H]
    \centering
    \includegraphics[width=0.6\textwidth]{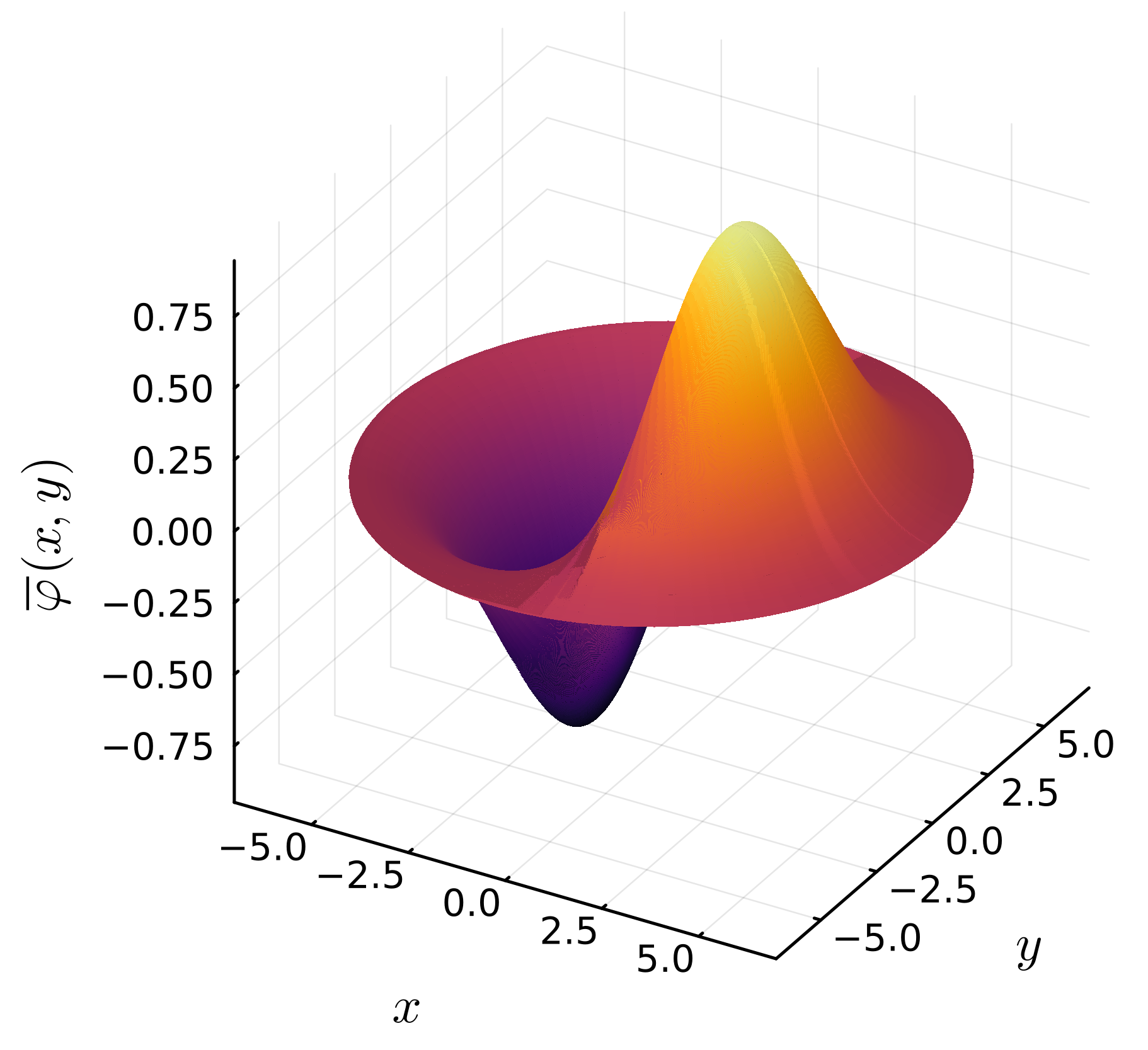}
    \caption{A non-radial numerical solution $\bar{\varphi} = e^{r^2/8} \bar{u}$ to Eq.~\eqref{eqn:phi-schrodinger-intro} with $d=2$, $\varepsilon = -1$ and $\omega = - 5/2$. \label{fig:asymmetric-schrodinger}}
\end{figure} 

More results with different self-similar profiles associated to the nonlinear heat and Schr\"odinger equations can be found in Section~\ref{sec:heat} and Section~\ref{sec:Schrodinger}.
\begin{rmk}
While the radial solutions we obtain were already known to exist thanks to previous works~\cite{Brezis1986AAbsorption, Peletier1986On0, Weissler1986RapidlyEquations}, our results also provide a precise description of these solutions. Furthermore, our methods can also deal with non-radial solutions, without relying on a perturbation argument. The only other work we are aware of where non-radial solutions are found is~\cite[Remark 3.11]{Escobedo1987VariationalEquation}, which relies on local bifurcation theory and therefore only holds for $p$ close to specific values.
\end{rmk}
\begin{rmk}
Beyond the results themselves, we believe part of the interest of this work lies in the computer-assisted tools that are developed in order to handle equations of the form~\eqref{eqn:general_elliptic_prblm}, and in the underlying ideas which may prove useful for CAPs, and more largely for numerical analysis, in a broader context. 
\begin{itemize}
    \item Our CAPs are based on a spectral expansion of the solution, which is a common approach on bounded domains, but has proven challenging on unbounded domains. This is made possible by departing from the usual paradigm which consists in choosing the basis only in terms of the leading order differential operator (i.e.~$\Delta$ in~\eqref{eqn:general_elliptic_prblm}), so that this operator becomes easy to invert by hand. Indeed, the basis we use makes $\cL:= -\left(\Delta + \frac{x}{2}\cdot \nabla\right)$ diagonal (and therefore trivial to invert), but the operator $\Delta$ alone does not have a well-behaved inverse with respect to this basis.
    \item Extracting some compactness is usually key for CAPs based on~\eqref{eqn:fixed_point}, and can be especially challenging on unbounded domains. Here, compactness is retrieved by working with appropriately weighted Sobolev spaces (see Section~\ref{sec:spacesandnorms}).
    \item We use spherical coordinate basis functions of the Hilbert space $H^2(\mu)$ which are the counterparts of the spherical harmonics in the Hermite--Laguerre family and allow us to handle our problems globally and with a unified treatment. To the best of our knowledge, this is the first occurrence of the use of these bases to solve partial differential equations, even from a purely numerical perspective. Indeed, this differs from the previous literature (both theoretical~\cite{Escobedo1987VariationalEquation} and computational~\cite{Funaro1991ApproximationFunctions})
    on this family of problems which have exhibited (multivariate) Hermite functions as an eigenbasis for $\mathcal{L}$. While at first glance, these bases might appear tricky to work with (especially in nonlinear problems), we demonstrate that they are well-suited to our problems. We show that they can be handled efficiently to both provide a spectral method to find numerical solutions to problems of the form~\eqref{eqn:general_elliptic_prblm} and then validate these solutions rigorously. On the one hand, these bases allow us to handle non-radial solutions to radially symmetric equations. On the other hand, in the case of radial solutions, these bases allow an approach based on the analysis of the operator $\mathcal{L}$ while exploiting the dimensionality reduction given by radial equations.
    \item As already mentioned, spectral expansions are common in CAPs for nonlinear equations, but only when they are associated to a Banach algebra structure which facilitates the control of nonlinearities. The basis used in this work does not enjoy this property, but we still manage to handle some nonlinear terms, thanks to a combination of rigorous quadrature and Sobolev embeddings, in a fashion which is more reminiscent of CAPs based on finite elements~\cite[Part I]{Nakao2019NumericalEquations}.
    \item Still regarding nonlinear terms, let us mention that we also handle non-polynomial nonlinearities, namely a fractional power $p$ in~\eqref{eqn:semiheat}. The existing CAP techniques based on spectral methods are more convenient to use when only polynomial nonlinearities occur, and therefore a common approach in the presence of non-polynomial terms is to first reformulate the equation in a polynomial one, typically adding several new unknowns and associated equations in the process~\cite{Henot2021OnEquations,Kepley2019ChaoticTheorem,Lessard2016AutomaticApproach}. For a different approach allowing to directly treat some non-polynomial terms without adding extra equations and unknowns, but relying heavily on a Banach algebra structure, we refer to~\cite{Breden2024Computer-assistedSensing}. One can also approximate smooth non-polynomial terms using interpolation and computable  error bounds~\cite{Arioli2006AProblem,Figueras2017RigorousApproach,vanderAalst2025PeriodicEquation}, but once again, the Banach algebra structure is then typically used to control perturbations around a given approximate solution. In the current work, we directly deal with the fractional power in a somewhat ad-hoc manner that also does not necessitate the introduction of extra equations and unknowns, and works without a Banach algebra.
\end{itemize} 
\end{rmk}

Our approach can also handle some equations featuring first-order derivatives, when $f$ in~\eqref{eqn:general_elliptic_prblm} really does depend on $\nabla u$. As an example, we also study self-similar solutions of a generalised viscous Burgers equation on $\R_+$ (see, e.g.~\cite{Aguirre1990Self-SimilarProblems, SrinivasaRao2003Self-similarDamping})
\begin{equation}\label{eqn:burger}
    \partial_t v + v^2\partial_x v = \partial_{xx} v,
\end{equation}
of the form
$$v(t, x) = t^{-1/4}u\left(\frac{x}{\sqrt{t}}\right).$$
Then, one has to find a profile $u$ which solves an equation akin to~\eqref{eqn:general_elliptic_prblm} with $f(x,u,\partial_{x} u)= u/4 - u^2 \partial_{x} u$:
\begin{align}
\label{eqn:elliptic_Burger}
    \partial_{xx} u + \frac{x}{2} \partial_{x} u + \frac{u}{4}-u^2 \partial_{x} u = 0,\quad x\in\R_+.
\end{align}

Finally, let us conclude the introduction by emphasising that there have been previous occurrences of CAPs of somewhat different nature for self-similar solutions, and that this topic has been attracting a lot of attention recently for fluid flows~\cite{Buckmaster2025SmoothFluids,Chen2025StableNumerics}. 

The remainder of the paper is organised as follows. We start by presenting the main important properties of the operator $\mathcal{L}$, together with appropriate weighted Sobolev spaces and bases of eigenfunctions in Section~\ref{sec:L}. We show how this setting allows us to recover a crucial Poincaré inequality and projection error estimates in Section~\ref{sec:spacesandnorms}, and provide some explicit constants for the Sobolev embeddings between these weighted spaces in Section~\ref{sec:embeddings}. The CAP methodology and the associated fixed-point problem where all these ingredients will be brought together is then introduced in Section~\ref{sec:fixedpoint}. Proving the existence of a fixed point requires us to explicitly evaluate (or at least upper-bound) several quantities associated to an approximate solution, and we present in Section~\ref{sec:quadrature} a quadrature scheme allowing us to compute these quantities rigorously. In Section~\ref{sec:heat}, we then focus on self-similar profiles for the nonlinear heat equation. Explicit bounds allowing to use the CAP in the case of radial solutions and with an integer exponent $p$ are derived in Section~\ref{sec:bounds}, and used in Section~\ref{sec:results_heat} to obtain our first set of results. In Section~\ref{sec:positivity} we provide a sufficient condition which we use to prove the positivity of the obtained solutions. We then show how to deal with some fractional exponents $p$ in Section~\ref{sec:heat-frac}, and with non-radial solutions in Section~\ref{sec:heat-asym}. Self-similar solutions of the nonlinear Schr\"odinger equations are treated in an analogous manner in Section~\ref{sec:Schrodinger}, and we end in Section~\ref{sec:Burger} with a generalised viscous Burgers equation. All the computations presented in this paper have been implemented in \texttt{Julia}, using the \texttt{IntervalArithmetic.jl} library 
\cite{david_p_sanders_2024_10459547} for interval arithmetic. The computer-assisted parts of the proofs can be reproduced using the code available at~\cite{Chu2024CodeOn}.

\section{Functional analytic preliminaries}
\label{sec:prelim}

\subsection{Properties of the operator $\mathcal{L}$}
\label{sec:L}

One of the main ideas of this work is to make use of the operator
$$\mathcal{L} = - \frac{x}{2}\cdot\nabla -\Delta,$$
to play the role analogous to that of the Laplacian $-\Delta$ as it is usually employed for computer-assisted proofs on a bounded domain. Indeed, consider the weighted space $L^2(\mu)$, with scalar product
\begin{align*}
    \langle u,v \rangle = \int_{\R^d} u(x) v(x) \mu(x) \d x,
\end{align*}
where $\mu(x) = e^{|x|^2/4}/Z$, $Z = 2^{d-1}\omega_{d-1}$ where $\omega_{d-1} = 2\pi^{d/2}/\Gamma(d/2)$ denotes the surface area of the $(d-1)$-dimensional sphere $\Sp^{d-1}$ (this choice of normalisation is made in order to match the usual convention in numerical analysis).
By integration by parts, we obtain the Dirichlet form
\begin{align}
\label{eq:IPP_mu}
    \langle \nabla u,\nabla v \rangle = \langle u,\mathcal{L} v \rangle, \qquad \mbox{for all $u\in H^1(\mu), v\in \mathcal{D}(\mathcal{L}) = H^2(\mu)$,}
\end{align}
which yields that $\mathcal{L}$ is a positive self-adjoint operator on $L^2(\mu)$, and can be analysed in the same ``Hilbertian'' fashion as $-\Delta$ is in a first course on elliptic PDEs, as will be seen below. We refer to~\cite{Kavian1987RemarksEquation} for more properties of $\mathcal{L}$, and note that the above Dirichlet form structure is also often exploited in the analysis of the generators of reversible diffusions (see for instance~\cite{Pavliotis2014StochasticApplications}, we explore the numerical analysis of more general equations of the form $V'(x)\partial_xu -\partial_{xx}u = f(x,u)$ on the weighted Sobolev space $H^1(e^{-V})$ in~\cite{Breden2025SolutionsSpaces}).

In the previous literature on the operator $\mathcal{L}$~\cite{Escobedo1987VariationalEquation, Funaro1991ApproximationFunctions}, its eigenbasis is usually given in cartesian coordinates in terms of Hermite functions
\begin{equation}
    \mathcal{L}\Psi_{\beta} = -\left(\frac{x}{2}\cdot\nabla + \Delta \right)\Psi_{\beta} = \frac{d+|\beta|}{2}\Psi_{\beta}, \qquad \mbox{where } \Psi_{\beta}(x) = e^{-|x|^2/4}\prod_{i=1}^{d-1}H_{\beta_i}(x_i/2), \qquad \beta \in \N^d, \label{eqn:hermite-basis}
\end{equation}
where $H_n$ denotes the Hermite polynomial of degree $n$. This basis might be suitable for some problems, but since the equations treated in this work are radially symmetric, we will instead use bases in spherical coordinates (except for $d=1$).

\begin{defn}[Spherical coordinate basis]\label{def:spherical-basis}
    For $d\in\{2,3\}$, define the following orthogonal basis of $L^2(\mu)$.
    \begin{itemize}
        \item For $d=2$, $m \in \N$, $l\in \mathbb{Z}$, $x_1 = r\cos\vartheta$, $x_2 = r\sin \vartheta$,
        $$\psi_{l,m}(r, \vartheta) = e^{-r^2/4}(r/2)^{|l|} L_m^{(|l|)}(r^2/4)\begin{cases}
            \cos l\vartheta \qquad &\mbox{if $l \geq 0$}\\
            -\sin l \vartheta \qquad &\mbox{if $l<0$,}
        \end{cases}$$
        where $L^{(\alpha)}_m$ denotes the generalised Laguerre polynomial of degree $m$ and parameter $\alpha$ (see~\cite[p. 775]{Abramowitz1970HandbookSeries}).
        \item For $d=3$, $m \in \N$, $k\in \mathbb{Z}$, $l\in \N_{\geq |k|}$, $x_1 = r\sin\vartheta\cos\phi$, $x_2 = r\sin \vartheta\sin\phi$, $x_3 = r\cos\vartheta$
        $$\psi_{k,l,m}(r, \vartheta,\phi) = e^{-r^2/4}(r/2)^l L_m^{(l+1/2)}(r^2/4)P^k_l(\cos\vartheta)\begin{cases}
            \cos k\phi \qquad &\mbox{if $k \geq 0$}\\
            -\sin k \phi \qquad &\mbox{if $k<0$,}
        \end{cases}$$
        where $P^k_l$ denotes the associated Legendre polynomial of order $k$ and degree $l$ (see~\cite[Chapter 8]{Abramowitz1970HandbookSeries}).
    \end{itemize}
\end{defn}
\begin{rmk}
    For $d=2$, the $\psi_{l,m}$'s are weighted and reformulated versions of the so-called Hermite polynomials of complex variables~\cite{Ito1952ComplexIntegral}. See~\cite{Chen2014OnOperators} for the derivations of some of these relations.
\end{rmk}
\begin{lem}
    The families $\{\psi_{l,m}\}$ and $\{\psi_{k,l,m}\}$ from Definition~\ref{def:spherical-basis} are eigenfunctions of $\mathcal{L}$ for $d=2$ and $d=3$ respectively.
    \begin{itemize}
        \item For $d=2$, $m \in \N$, $l\in \mathbb{Z}$,
        $$\mathcal{L}\psi_{l,m} = -\left(\frac{r}{2}\pdiff{\psi_{l,m}}{r}+\frac{1}{r}\pdiff{}{r}\left(r\pdiff{\psi_{l,m}}{r}\right)+\frac{1}{r^2}\pdiff{^2\psi_{l,m}}{\vartheta^2}\right) = \left(\frac{2+|l|}{2}+m\right)\psi_{l,m}.$$
        \item For $d=3$, $m \in \N$, $k\in \mathbb{Z}$, $l\in \N_{\geq |k|}$,
        \begin{align*}
        \mathcal{L}\psi_{k,l,m} &= -\left(\frac{r}{2}\pdiff{\psi_{k,l,m}}{r}+\frac{1}{r^2}\pdiff{}{r}\left(r^2\pdiff{\psi_{k,l,m}}{r}\right)+\frac{1}{r^2\sin{\vartheta}}\pdiff{}{\vartheta}\sin{\vartheta}\left(\pdiff{\psi_{k,l,m}}{\vartheta}\right)+\frac{1}{r^2\sin^2{\vartheta}}\pdiff{^2\psi_{k,l,m}}{\phi^2}\right)\\
        &= \left(\frac{3+l}{2}+m\right)\psi_{k,l,m}.
        \end{align*}
    \end{itemize}
\end{lem}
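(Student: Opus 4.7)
The plan is to proceed by separation of variables, exploiting the fact that the expressions for $\mathcal{L}$ in polar and spherical coordinates displayed in the statement already exhibit a tensorised structure: for a product $\psi = R(r)\Theta$, applying $\mathcal{L}$ decouples into a purely radial operator acting on $R$ and a $1/r^2$-multiple of an angular operator acting on $\Theta$. The proof then reduces to checking one angular eigenvalue problem and one radial ODE in each dimension.

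For the angular factors, the identifications are straightforward. In $d=2$, the functions $\cos(l\vartheta)$ and $-\sin(l\vartheta)$ are eigenfunctions of $-\partial_\vartheta^2$ with eigenvalue $l^2$. In $d=3$, the combinations $P_l^k(\cos\vartheta)\cos(k\phi)$ and $-P_l^k(\cos\vartheta)\sin(k\phi)$ are, up to normalisation, precisely the real-valued spherical harmonics, and are therefore eigenfunctions of the Laplace--Beltrami operator on $\mathbb{S}^2$ with eigenvalue $l(l+1)$. After dividing by $r^2$, these produce respectively an $l^2/r^2$ and an $l(l+1)/r^2$ centrifugal term in the eigenvalue equation for $R$.

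What remains is a purely radial ODE for $R(r)$. The plan is to first factor out the $(r/2)^{|l|}$ (resp.\ $(r/2)^l$) prefactor, which cancels the centrifugal term and yields a simpler equation for the residual factor, and then apply the change of variable $\rho = r^2/4$, under which $\partial_r = \sqrt{\rho}\,\partial_\rho$. Peeling off the remaining Gaussian prefactor $e^{-\rho}$ and collecting like terms should leave
$$\rho\, y''(\rho) + (\alpha + 1 - \rho)\, y'(\rho) + m\, y(\rho) = 0,$$
with $\alpha = |l|$ in $d=2$ and $\alpha = l+1/2$ in $d=3$. This is exactly the defining ODE of the generalised Laguerre polynomial $L_m^{(\alpha)}$~\cite[eq.~22.6.15]{Abramowitz1970HandbookSeries}, so substituting $y = L_m^{(\alpha)}$ closes the computation, and carefully tracking the constants produced by the chain rule yields the claimed eigenvalues $(2+|l|)/2 + m$ and $(3+l)/2 + m$.

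The main obstacle is the bookkeeping in the $\rho = r^2/4$ substitution: one has to ensure that the half-integer powers of $\rho$ coming from the chain rule, the derivatives falling on $e^{-\rho}$ and on $(r/2)^l$, the drift term $-\frac{r}{2}\partial_r$, and the $1/r^2$ centrifugal contribution all combine into exactly the coefficients of the generalised Laguerre equation, with no leftover $\rho$-dependent terms. A useful sanity check, independent of this direct computation, comes from the conjugation $u \mapsto e^{|x|^2/8}u$, which by~\eqref{eqn:unknown-change} maps $\mathcal{L}$ to the isotropic harmonic oscillator Hamiltonian $-\Delta + |x|^2/16 + d/4$; the $\psi_{l,m}$'s and $\psi_{k,l,m}$'s should then transform into the standard radial eigenfunctions of the quantum harmonic oscillator, whose eigenvalues are classical and match the formulas claimed here.
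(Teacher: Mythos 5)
Your proposal is correct and follows essentially the same route as the paper: the paper does not spell out the computation, but its justification of the more general lemma that follows is precisely "separation of the radial variable and the spherical variables and identifying the Laguerre differential equation after the change of variable $z=r^2/4$," which is exactly your plan. Your additional sanity check via the conjugation to the harmonic oscillator is a nice cross-check but not part of the paper's argument, and the constant-tracking in your proposed reduction to $\rho\,y'' + (\alpha+1-\rho)\,y' + m\,y = 0$ with $\alpha=|l|$ (resp.\ $\alpha=l+1/2$) does produce the claimed eigenvalues $(2+|l|)/2+m$ and $(3+l)/2+m$.
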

\noindent The construction of the bases given in Definition~\ref{def:spherical-basis} hinges on the following lemma.
\begin{lem}
    Let $f$ be a hyperspherical function on $\Sp^{d-1}$ of order $l\in \N$, i.e.
    $$\Delta_{\mathbb{S}^{d-1}} f = -l(l+d-2)f,$$
    where $\Delta_{\mathbb{S}^{d-1}}$ denotes the Laplace-Beltrami operator on $\mathbb{S}^{d-1}$. Then $e^{-r^2/4}(r/2)^lL_m^{(l+d/2-1)}(r^2/4)f$
    is an eigenfunction of $\mathcal{L}$ with eigenvalue $(d+l)/2+m$ on $\R^d$.
\end{lem}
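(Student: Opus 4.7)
The plan is to separate variables in spherical coordinates and reduce the eigenvalue problem to the generalized Laguerre equation. Writing the Laplacian in spherical coordinates as
\begin{equation*}
\Delta = \partial_{rr} + \frac{d-1}{r}\partial_r + \frac{1}{r^2}\Delta_{\mathbb{S}^{d-1}},
\end{equation*}
and noting that $x\cdot\nabla = r\partial_r$, the operator $\mathcal{L}$ decomposes as
\begin{equation*}
\mathcal{L} = -\partial_{rr} - \frac{d-1}{r}\partial_r - \frac{r}{2}\partial_r - \frac{1}{r^2}\Delta_{\mathbb{S}^{d-1}}.
\end{equation*}
First I would apply $\mathcal{L}$ to a product $g(r)f(\omega)$ and use the hypothesis $\Delta_{\mathbb{S}^{d-1}}f = -l(l+d-2)f$ to eliminate the angular part. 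The problem then reduces to showing that $g(r) = e^{-r^2/4}(r/2)^l L_m^{(l+d/2-1)}(r^2/4)$ satisfies the radial ODE
\begin{equation*}
-g''(r) - \left(\frac{d-1}{r} + \frac{r}{2}\right)g'(r) + \frac{l(l+d-2)}{r^2}g(r) = \left(\frac{d+l}{2} + m\right)g(r).
\end{equation*}

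Next I would perform the change of variable $u = r^2/4$, together with the ansatz $g(r) = e^{-u}u^{l/2}h(u)$, where $h = L_m^{(\alpha)}$ with $\alpha = l + d/2 - 1$. Using $\partial_r = (r/2)\partial_u$ and the chain rule, I would compute $g'$ and $g''$ and substitute them into the radial ODE. After factoring out $e^{-u}u^{l/2}$, the terms in $1/r^2$ from the differentiation of $u^{l/2}$ should exactly cancel the centrifugal term $l(l+d-2)/r^2$, which is the key algebraic consistency check that determines the exponent $l$ in the prefactor. What remains is an ODE in $u$ for $h$, which I would simplify to bring to the standard form
\begin{equation*}
u h''(u) + (\alpha + 1 - u) h'(u) + m h(u) = 0.
\end{equation*}
This is exactly the generalized Laguerre equation with parameter $\alpha = l + d/2 - 1$, which is satisfied by $L_m^{(\alpha)}$ by definition, and the matching of the eigenvalue yields $(d+l)/2 + m$.

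The main obstacle is purely computational: carefully carrying out the derivatives of $e^{-r^2/4}(r/2)^l h(r^2/4)$ and the change of variable without algebraic error, and checking that the non-Laguerre pieces cancel. The two cancellations to verify are (i) the centrifugal term $l(l+d-2)/r^2$ against the contribution from twice differentiating $(r/2)^l$, and (ii) the constant terms coming from $\frac{d-1}{r}\partial_r$ combining with those from $\partial_{rr}$ to produce the correct $(d+l)/2 + m$ eigenvalue after the cross terms between $e^{-u}$ and $u^{l/2}$ are tracked. Once these cancellations are checked, the conclusion is immediate from the standard Laguerre ODE. As a sanity check, I would verify that the two special cases given in the previous lemma ($d=2$ with hyperspherical harmonics $\cos l\vartheta$, $-\sin l\vartheta$ and $d=3$ with $P_l^k(\cos\vartheta)\cos k\phi$ etc.) fall out with the stated eigenvalues $(2+|l|)/2 + m$ and $(3+l)/2 + m$.
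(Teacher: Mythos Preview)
Your proposal is correct and follows exactly the same route as the paper: separate the radial and spherical variables, then reduce the resulting radial ODE to the generalized Laguerre equation via the change of variable $z=r^2/4$. The paper's proof is the one-line version of your argument, so there is nothing further to add.
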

\begin{proof}
    This follows from separation of the radial variable and the spherical variables and by identifying the Laguerre differential equation after the change of variable $z = r^2/4$.
\end{proof}

In order to obtain the bases of Definition~\ref{def:spherical-basis}, it then suffices to have orthogonal bases of the eigenspaces of $\Delta_{\mathbb{S}^{d-1}}$, and this construction can also be used in higher dimensions (e.g.~by using~\cite{Higuchi1987Symmetric1} for the construction of an eigenbasis of $\Delta_{\Sp^{d-1}}$).
Note that as a consequence, this directly generalises the known corresponding bases for the harmonic Schr\"{o}dinger operator and for the generator of the radially symmetric Ornstein--Uhlenbeck process. Furthermore, the linear relationship between bivariate Hermite and Laguerre polynomials observed in~\cite{Chen2014OnOperators} for $d=2$ (and the one known for $d=1$) can be generalised by the above pattern.

\begin{cor}[Radial eigenbasis]
\label{cor:radial_basis}
For $d \in \N$, let $\alpha  = d/2 -1$ and let $\psi_n(r) := L^{(\alpha)}_n(r^2/4)e^{-r^2/4}$, then
    $$\mathcal{L}\psi_n = -\left(\frac{r}{2}+\frac{d-1}{r}\right)\pdiff{\psi_n}{r} -\pdiff{^2\psi_n}{r^2}= \left(\frac{d}{2}+n\right)\psi_n. $$
\end{cor}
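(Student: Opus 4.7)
The plan is to observe that this corollary is essentially the $l=0$ case of the preceding lemma, but since the identity can be verified by a short direct computation, I would present both perspectives: first note that a constant function on $\mathbb{S}^{d-1}$ is a hyperspherical function of order $l=0$, so the preceding lemma immediately yields that $e^{-r^2/4}(r/2)^0 L_n^{(d/2-1)}(r^2/4) = \psi_n(r)$ is an eigenfunction of $\mathcal{L}$ with eigenvalue $d/2 + n$. For a self-contained argument, I would then substitute $z = r^2/4$ and use the Laguerre differential equation
\begin{equation*}
    z L_n^{(\alpha)\prime\prime}(z) + (\alpha+1-z) L_n^{(\alpha)\prime}(z) + n L_n^{(\alpha)}(z) = 0.
\end{equation*}

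First I would compute, with $z=r^2/4$ so that $\partial_r z = r/2$, the radial derivatives
\begin{equation*}
    \partial_r \psi_n = \tfrac{r}{2} e^{-z}\bigl[L_n^{(\alpha)\prime}(z) - L_n^{(\alpha)}(z)\bigr],
\end{equation*}
and
\begin{equation*}
    \partial_r^2 \psi_n = \tfrac{1}{2} e^{-z}\bigl[L_n^{(\alpha)\prime}(z) - L_n^{(\alpha)}(z)\bigr] + z e^{-z}\bigl[L_n^{(\alpha)\prime\prime}(z) - 2 L_n^{(\alpha)\prime}(z) + L_n^{(\alpha)}(z)\bigr].
\end{equation*}
Then, using $d-1 = 2\alpha + 1$, I would multiply and combine to get
\begin{equation*}
    \bigl(\tfrac{r}{2} + \tfrac{d-1}{r}\bigr) \partial_r \psi_n = (z + \alpha + \tfrac{1}{2}) e^{-z}\bigl[L_n^{(\alpha)\prime}(z) - L_n^{(\alpha)}(z)\bigr].
\end{equation*}

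Collecting all terms and grouping by derivative order of $L_n^{(\alpha)}$, the expression for $-\mathcal{L}\psi_n$ becomes
\begin{equation*}
    -\mathcal{L}\psi_n = e^{-z}\Bigl[z L_n^{(\alpha)\prime\prime}(z) + (\alpha+1-z) L_n^{(\alpha)\prime}(z) - (\alpha+1) L_n^{(\alpha)}(z)\Bigr].
\end{equation*}
Applying the Laguerre ODE replaces the first two summands by $-n L_n^{(\alpha)}(z)$, yielding $-\mathcal{L}\psi_n = -(n + \alpha + 1)\psi_n = -(n + d/2)\psi_n$, which is the claimed identity.

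There is no real obstacle here: the only mild bookkeeping is handling the chain rule for $z = r^2/4$ together with the inverse power $(d-1)/r$, and keeping track of the dependence $\alpha = d/2 - 1$ so that the $1/r$ singularity in the transport term cancels cleanly. The key fact being invoked is the Laguerre differential equation, which slots in at the last step.
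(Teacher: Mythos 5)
Your proposal is correct and takes essentially the same route as the paper: the corollary is indeed the $l=0$ specialization of the preceding lemma (with the constant spherical function), and the paper's lemma proof itself is only a sketch that invokes the Laguerre ODE after the substitution $z=r^2/4$, which is exactly the direct computation you carry out in detail. Your bookkeeping is correct, including the cancellation coming from $d-1=2\alpha+1$ and the final use of $zL_n^{(\alpha)\prime\prime}+(\alpha+1-z)L_n^{(\alpha)\prime}+nL_n^{(\alpha)}=0$.
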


Note that with these bases, we can handle radial equations globally without treating either the behaviour at infinity or at the origin separately (see for instance~\cite{vandenBerg2023ConstructiveRd} for an example of CAP where the behaviours at the origin and at infinity are handled separately).

\subsection{Spaces, norms and Poincaré inequality}
\label{sec:spacesandnorms}

One of the main challenges of computer-assisted proofs on unbounded domains is that the embedding $H^1(\Omega) \hookrightarrow L^2(\Omega)$ is not compact anymore for $\Omega = \R^d$, as no Poincaré inequality even holds. Observe that in our setting with weighted spaces, the eigenbasis of $\mathcal{L}$ together with~\eqref{eq:IPP_mu} immediately implies a genuine Poincaré inequality

\begin{equation}
    \left\|u\right\|_{L^2(\mu)}\leq\sqrt{\frac{2}{d}\langle \nabla u, \nabla u \rangle} = \sqrt{\frac{2}{d}}\left\|\nabla u\right\|_{L^2(\mu)} \qquad \mbox{for all $u \in H^1(\mu)$} ,\label{eqn:Poincineq}
\end{equation}
since $d/2$ is the smallest eigenvalue of $\mathcal{L}$. We can therefore consider on $H^1(\mu)$ the scalar product $\langle\cdot, \cdot \rangle_{H^1(\mu)} = \langle \nabla \cdot, \nabla \cdot\rangle $ and its associated norm, which is equivalent to the usual $H^1(\mu)$-norm. Similarly, on the space $H^2(\mu)$ we consider the scalar product $\langle\cdot, \cdot \rangle_{H^2(\mu)} = \langle \cL \cdot, \cL \cdot\rangle $ and associated norm, which is also equivalent to the usual $H^2(\mu)$-norm (see Lemma~\ref{lem:phiH2}).

Notice that these choices of norms yield, in a similar fashion to~\eqref{eqn:Poincineq},
\begin{equation}
\label{eqn:Poincineq2}
    \Vert u\Vert_{H^1(\mu)} \leq \sqrt{\frac{2}{d}}\Vert u\Vert_{H^2(\mu)} \qquad \mbox{for all $u \in H^2(\mu)$}.
\end{equation}

\begin{notation}\label{not:1}
    We denote by $\|u\|_{\infty}$ the essential supremum of $|u|$, i.e.~for a measure $\nu$ on $\Omega$,
    $$\|u\|_{\infty} := \inf\left\{M \mid |u(x)|\leq M \textrm{ for $\nu$-a.e. $x\in \Omega$}\right\}.$$
    Our notation does not depend on $\nu$, as all the measures we will be using are equivalent (i.e.~they have the same null sets). Finally, we adopt the usual convention that $u$ is said to be continuous if it has a continuous representative.
\end{notation}

Another important feature of our setup is that that $\mathcal{L}^{-1}:L^2(\mu) \to H^1(\mu)$ is well-defined (say via the Lax--Milgram theorem~\cite[Corollary V.8]{Brezis2011FunctionalEquations}, using~\eqref{eq:IPP_mu} and~\eqref{eqn:Poincineq}) and compact. More quantitatively, if $P_n$ denotes the orthogonal projection in $L^2(\mu)$
onto $\mathrm{Span}\{\psi \mid \mathcal{L}\psi = \lambda \psi \textrm{ with } \lambda \leq d/2 +n\}$, \eqref{eq:IPP_mu} yields the following error estimates
\begin{align}
\label{eq:projection_error}
    \|\mathcal{L}^{-1}(f - P_n f)\|_{H^1(\mu)} \leq \sqrt{\frac{2}{d+2n+1}}\|f - P_n f\|_{L^2(\mu)} \qquad \mbox{for all $f \in L^2(\mu)$},
\end{align}
and
\begin{align}
\label{eq:projection_error_L2H2}
    \left\Vert f - P_n f\right\Vert_{L^2(\mu)} \leq \frac{2}{d+2n+1}\left\Vert f - P_n f\right\Vert_{H^2(\mu)} \qquad \mbox{for all $f \in H^2(\mu)$},
\end{align}
which are critical for our CAPs, and in particular for controlling the quality of the approximate inverse $A$ that will be defined in Section~\ref{sec:fixedpoint}.

\begin{rmk}
\label{rmk:Pn}
    Let us highlight two evident but useful properties of the orthogonal projector $P_n$. First, $P_n$ commutes with $\mathcal{L}$ (and $\mathcal{L}^{-1}$). Second $P_n$ is also an orthogonal projection with respect to the $H^1(\mu)$ and $H^2(\mu)$ scalar products. In particular, $P_n(L^2(\mu)) = P_n(H^1(\mu)) = P_n(H^2(\mu))$.
\end{rmk}

\subsection{Sobolev embeddings}
\label{sec:embeddings}

To control the nonlinearities in our setting, Sobolev embeddings play a crucial role. We start by recalling some $H^1(\mu)$-embeddings proved in~\cite{ Escobedo1987VariationalEquation,Kurtz1983WeightedProblems, Weissler1986RapidlyEquations}.

\begin{thm}
    We have the following Sobolev embeddings:
    \begin{itemize}
        \item[(i)]\label{item:holder1} If $d=1$, $u \in H^1(\mu) \implies e^{x^2/8}u \in\mathcal{C}^{0}(\R)$. %
        \item[(ii)] If $d=2$, $H^1(\mu) \subset L^p(\mu)$ for all $p \in [2, \infty)$.
        \item[(iii)] If $d\geq 3$, $H^1(\mu) \subset L^p(\mu)$ for all $p \in [2, 2d/(d-2)]$.
    \end{itemize}
\end{thm}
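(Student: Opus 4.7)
The plan is to reduce each of the three weighted embeddings to a classical (unweighted) Sobolev embedding on $\R^d$ via the unitary-in-spirit substitution $v(x) = e^{|x|^2/8}u(x)$, which is precisely the change of unknown already appearing in~\eqref{eqn:unknown-change}. A direct computation gives $\nabla v = e^{|x|^2/8}\bigl(\nabla u + \tfrac{x}{4}u\bigr)$, so $|v|^2 = Z|u|^2 \mu$ and
\begin{equation*}
|\nabla v|^2 = Z\mu\Bigl(|\nabla u|^2 + \tfrac{1}{2}(x\cdot\nabla u)\,u + \tfrac{|x|^2}{16}u^2\Bigr).
\end{equation*}
Integrating and using an integration by parts on the cross term (which is essentially the Dirichlet form~\eqref{eq:IPP_mu} applied to $u^2/2$) shows that the unweighted $L^2(\R^d)$ norms of $v$ and $\nabla v$ are controlled by the weighted $H^1(\mu)$ norm of $u$; more precisely one gets $\|v\|_{L^2(\R^d)}^2 = Z\|u\|_{L^2(\mu)}^2$ and
\begin{equation*}
\|\nabla v\|_{L^2(\R^d)}^2 = Z\|\nabla u\|_{L^2(\mu)}^2 - \int_{\R^d}\Bigl(\tfrac{d}{4}+\tfrac{|x|^2}{16}\Bigr) u^2\, e^{|x|^2/4}\,\d x \leq Z\|\nabla u\|_{L^2(\mu)}^2,
\end{equation*}
so that $v\in H^1(\R^d)$ with $\|v\|_{H^1(\R^d)} \lesssim \|u\|_{H^1(\mu)}$ (the Poincar\'e inequality~\eqref{eqn:Poincineq} absorbing the $L^2$ part on the right-hand side).

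For (i), I would then simply invoke the one-dimensional embedding $H^1(\R)\hookrightarrow \mathcal{C}^{0,1/2}(\R)$ applied to $v$, which is exactly the statement $e^{x^2/8}u\in\mathcal{C}^{0,1/2}(\R)$. For (ii) and (iii), I would combine the standard Gagliardo--Nirenberg--Sobolev embedding for $H^1(\R^d)$ with the pointwise comparison $e^{|x|^2/4} \leq e^{p|x|^2/8}$, which is valid precisely when $p \geq 2$. This gives
\begin{equation*}
\int_{\R^d}|u|^p\, \mu\,\d x = \frac{1}{Z}\int_{\R^d} |u|^p e^{|x|^2/4}\,\d x \leq \frac{1}{Z}\int_{\R^d}|v|^p\,\d x = \frac{1}{Z}\|v\|_{L^p(\R^d)}^p,
\end{equation*}
and the classical Sobolev embedding on $\R^d$ yields the desired bound on $\|v\|_{L^p(\R^d)}$ in the appropriate range of $p$ (all $p\in[2,\infty)$ for $d=2$, $p\in[2,2d/(d-2)]$ for $d\geq 3$).

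The main obstacle to carrying this out rigorously is not really an obstacle at all, but rather a bookkeeping issue: one must justify that the integration by parts underlying the identity for $\|\nabla v\|_{L^2(\R^d)}^2$ is legitimate for every $u\in H^1(\mu)$, which is done by first establishing it on the dense subspace spanned by the eigenfunctions of $\mathcal L$ from Definition~\ref{def:spherical-basis} and extending by density using the equivalence of norms. Since the theorem is only being \emph{recalled} from~\cite{Escobedo1987VariationalEquation,Kurtz1983WeightedProblems,Weissler1986RapidlyEquations}, I would in practice state the result with a citation and relegate this sketch to a short remark.
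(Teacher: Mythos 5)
Your proposal is correct and follows essentially the same route the paper itself takes when it proves the quantitative version in Lemma~\ref{lem:CartEmb}: pass to $\varphi = e^{|x|^2/8}u$, use the integration-by-parts identity~\eqref{eqn:useful-id} to obtain $\|\nabla\varphi\|_{L^2(\R^d)} \leq \|\nabla u\|_{L^2(e^\theta)}$ (your identity is an algebraic rearrangement of the same computation), observe the pointwise comparison $e^{|x|^2/4}\leq e^{p|x|^2/8}$ for $p\geq 2$, and invoke the classical unweighted embeddings. The paper merely cites~\cite{Escobedo1987VariationalEquation,Kurtz1983WeightedProblems,Weissler1986RapidlyEquations} for the qualitative statement and gives this argument only for the explicit-constant version, so your sketch correctly fills in exactly the intended proof.
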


While in principle, this shows the feasibility of a computer-assisted proof using this functional framework, in practice we also require the corresponding embedding constants that we derive below.

\begin{notation}\label{not:Rd}
    In this section, we make use of classical embeddings between Sobolev spaces without weights in order to derive the needed results on weighted spaces. When we include $\R^d$ in the notation, e.g.~$L^2(\R^d)$, this means we refer to the space without weight.
\end{notation}

\begin{lem}\label{lem:CartEmb}Let $\theta(x) = |x|^2/4$ and denoting $\|u\|_{H^1(e^{\theta})}:=\|\nabla u\|_{L^2(e^{\theta})}$, then for all $u \in H^1(e^{\theta})$
        \begin{equation}
        \label{eq:CartEmb}
        \|u\|_{L^p(e^{\theta})} \leq \|e^{(1/2-1/p)\theta}u\|_{L^p(e^{\theta})}\leq C(d, p) \|u\|^{a}_{L^2(e^{\theta})}\|u\|^{1-a}_{H^1(e^{\theta})},
        \end{equation}
    where $a = 1+ d (1/p-1/2)$ and:
    \begin{itemize}
    \item for $d = 1$, $p \in [2, \infty]$ and $C(1, p) = 2^{1/2-1/p}$;
    \item for $d = 2$, $p\in [2, \infty)$ and, writing $p = 2(k+r)$ with $k \in \N^{*}=\N\backslash\{0\}$ and $r\in [0,1)$,
        $$C(2, 2(k+r)) = \left(k!\right)^{1/(k+r)}(k+1)^{r/(k+r)};$$
    \item for $d\geq 3$, $p \in [2, 2d/(d-2)]$ and
    $$C(d, p) = \left(\frac{1}{d(d-2)\pi}\right)^{(1-a)/2}\left\{\frac{(d-1)!}{\Gamma(d/2)}\right\}^{1/2-1/p}.$$
    \end{itemize}
\end{lem}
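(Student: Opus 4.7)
The proof plan is to reduce the weighted Sobolev embedding to an unweighted one via an explicit change of unknown, and then invoke classical Gagliardo–Nirenberg–Sobolev inequalities with known sharp constants.

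First, the leftmost inequality is immediate: since $\theta \geq 0$ and $p \geq 2$, we have $e^{\theta(1/2-1/p)} \geq 1$ pointwise, so $|u|^p e^{\theta} \leq |u|^p e^{p\theta(1/2-1/p)} e^\theta$ integrates to the desired inequality. For the non-trivial part, I would introduce $v = e^{\theta/2} u$. A direct computation gives
\begin{equation*}
\|e^{\theta(1/2-1/p)} u\|_{L^p(e^\theta)}^p = \int_{\R^d} |u|^p e^{p\theta/2} \, \d x = \|v\|_{L^p(\R^d)}^p,
\end{equation*}
so the middle quantity is simply the unweighted $L^p$-norm of $v$. Clearly $\|v\|_{L^2(\R^d)} = \|u\|_{L^2(e^\theta)}$, and an integration by parts (expanding $|\nabla v|^2$, using $\nabla \theta = x/2$ and integrating the cross term) yields the key identity
\begin{equation*}
\|\nabla v\|_{L^2(\R^d)}^2 = \|\nabla u\|_{L^2(e^\theta)}^2 - \frac{d}{4}\|u\|_{L^2(e^\theta)}^2 - \frac{1}{16} \int_{\R^d} |x|^2 |u|^2 e^\theta \, \d x,
\end{equation*}
which in particular gives $\|\nabla v\|_{L^2(\R^d)} \leq \|u\|_{H^1(e^\theta)}$.

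It therefore suffices to establish $\|v\|_{L^p(\R^d)} \leq C(d,p) \|v\|_{L^2(\R^d)}^a \|\nabla v\|_{L^2(\R^d)}^{1-a}$ for functions $v$ in the usual space $H^1(\R^d)$, with the stated constants. The exponent $a = 1 + d(1/p-1/2)$ is precisely the Gagliardo–Nirenberg scaling exponent. For $d \geq 3$, I would start from Talenti's sharp Sobolev inequality $\|v\|_{L^{p^{*}}(\R^d)} \leq S_d \|\nabla v\|_{L^2(\R^d)}$ with $p^{*} = 2d/(d-2)$ and $S_d = (d(d-2)\pi)^{-1/2} \bigl((d-1)!/\Gamma(d/2)\bigr)^{1/d}$, then interpolate with $L^2$ via Hölder, which produces $C(d,p) = S_d^{1-a}$ matching the formula. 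For $d=1$, I would use the classical pointwise bound $v(x)^2 \leq 2\|v\|_{L^2}\|v'\|_{L^2}$ followed by the $L^\infty$–$L^2$ interpolation $\|v\|_{L^p}^p \leq \|v\|_\infty^{p-2}\|v\|_{L^2}^2$, which readily yields the constant $2^{1/2-1/p}$.

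The case $d=2$ is the most delicate, since no sharp critical Sobolev inequality is available, and this is where I expect the main technical obstacle to lie. My plan is to handle the even integers $p = 2k$ by an iteration/induction argument: using the identity $|v|^{2k} = (v^2)^k$ and applying the 1D Gagliardo–Nirenberg pointwise in each variable, together with Cauchy–Schwarz and integration by parts on $\partial_i (v^{2k})$, one obtains a recursion of the form $\|v\|_{L^{2k}}^{2k} \leq (k!)^2 \|v\|_{L^2}^{k+1} \|\nabla v\|_{L^2}^{k-1}/\sqrt{2}^{?}$ — the precise bookkeeping yielding the factor $(k!/\sqrt{2})^{1/k}$ is essentially an optimised form of Ladyzhenskaya's inequality iterated. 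For non-integer $p = 2(k+r)$ with $r \in (0,1)$, I would interpolate between $L^{2k}$ and $L^{2(k+1)}$ via Hölder, with the weight $r$ raising the $k+1$ contribution to the power $r/(k+r)$, which accounts for the $(k+1)^{r/(k+r)}$ factor in $C(2,p)$.
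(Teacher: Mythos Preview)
Your proposal is correct and follows essentially the same route as the paper: the substitution $v=e^{\theta/2}u$, the identity yielding $\|\nabla v\|_{L^2(\R^d)}\leq\|u\|_{H^1(e^\theta)}$, Talenti's sharp constant plus H\"older interpolation for $d\geq 3$, and the $L^\infty$ bound plus interpolation for $d=1$ are exactly what the paper does. The only place you diverge is $d=2$: the paper does not build the recursion from a pointwise 1D argument as you sketch, but instead invokes directly the inequality $\|v\|_{L^{2k}(\R^2)}\leq k^{1/k}\|v\|_{L^{2(k-1)}(\R^2)}^{(k-1)/k}\|\nabla v\|_{L^2(\R^2)}^{1/k}$ (Brezis, Corollary~IX.11), which by straightforward induction gives $\|v\|_{L^{2k}}\leq (k!)^{1/k}\|v\|_{L^2}^{1/k}\|\nabla v\|_{L^2}^{1-1/k}$ and then interpolates between consecutive even integers --- this makes the bookkeeping that you flag as uncertain completely transparent.
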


\begin{proof}
    First, notice that $(1/2-1/p)\theta \geq 0$ since $p\geq 2$, which yields the first inequality in~\eqref{eq:CartEmb}. 
    Then, we note that for all $u\in H^1(e^{\theta})$ and $\varphi = e^{|x|^2/8}u$, \begin{equation}\label{eqn:useful-id}
        -\frac{1}{2}\int_{\R^d}\left(x\cdot \nabla \varphi\right)\varphi\d x =  \frac{d}{4}\int_{\R^d}\varphi^2 \d x,
    \end{equation}
    as already shown in the proof of~\cite[Lemma 4.17]{Escobedo1987VariationalEquation}. Indeed, for any $\varphi \in\mathcal{C}^{\infty}_c(\R)$, an integration by part yields
    $$
    \int_{\R^d}\left(x\cdot \nabla \varphi\right)\varphi\d x = -\int_{\R^d} \varphi\nabla\cdot(x\varphi)\d x = -\int_{\R^d}\varphi(x\cdot\nabla\varphi)\d x -d\int_{\R^d}\varphi^2\d x,$$
    which gives~\eqref{eqn:useful-id}, and this identity still holds for $\varphi = e^{|x|^2/8}u$ by a density argument.
    Therefore,
    $$\|\nabla u \|_{L^2(e^{\theta})}^2=\int_{\R^d}\left|\nabla\varphi - \frac{x}{4}\varphi\right|^2\d x = \int_{\R^d}\left(\left|\nabla\varphi\right|^2-\frac{\varphi}{2}\left(x\cdot \nabla\varphi\right)+\frac{|x|^2}{16}\varphi^2\right)\d x = \int_{\R^d}\left[\left|\nabla\varphi\right|^2 +\varphi^2\left(\frac{d}{4}+\frac{|x|^2}{16}\right)\right]\d x$$
    which yields
    \begin{equation}\label{eqn:Lions-ineq}
    \|\nabla \varphi\|_{L^2(\R^d)}\leq \|\nabla u \|_{L^2(e^{\theta})}.
    \end{equation}
    \begin{itemize}
        \item For $d=1$, from the proof of~\cite[Theorem VIII.7]{Brezis2011FunctionalEquations}, we have that
        $$\|\varphi\|_{\infty} \leq 2^{1/2} \|\varphi\|_{L^2(\R)}^{1/2}\|\varphi'\|_{L^2(\R)}^{1/2}.$$
        Therefore,
        \begin{equation}\label{eqn:infty-estimate}
        \|e^{\theta/2}u\|_{\infty}\leq 2^{1/2} \|u\|_{L^2(e^{\theta})}^{1/2}\|u\|_{H^1(e^{\theta})}^{1/2},
         \end{equation}
        which is the announced inequality for $p=\infty$. For $2\leq p <\infty$, Hölder's inequality yields 
        \begin{align*}
            \|e^{\theta(1/2-1/p)}u\|_{L^p(e^{\theta})} &\leq \left(\|e^{\theta/2}u\|_{\infty}^{p-2}\|u\|^2_{L^2(e^{\theta})}\right)^{1/p} \\
            & = \|e^{\theta/2}u\|_{\infty}^{1-2/p}\|u\|^{2/p}_{L^2(e^{\theta})} \\
            & \leq 2^{1/2-1/p}\|u\|^{1/2+1/p}_{L^2(e^{\theta})}\|u\|^{1/2-1/p}_{H^1(e^{\theta})}.
        \end{align*}
    \item Now, for $d=2$, we are in the ``limiting case'',
    so from~\cite[Corollary IX.11]{Brezis2011FunctionalEquations}, for $k\in \mathbb{N}^{*}$
    $$\|\varphi\|_{L^{2k}(\mathbb{R}^2)} \leq k^{1/k}\|\varphi\|_{L^{2(k-1)}(\mathbb{R}^2)}^{(k-1)/k}\|\nabla \varphi\|_{L^2(\mathbb{R}^2)}^{1/k},$$
    and by induction on $k \in \mathbb{N}^{*}$
    $$\|\varphi\|_{L^{2k}(\R^2)} \leq (k!)^{1/k} \|\varphi\|_{L^2(\R^2)}^{1/k} \|\nabla \varphi\|_{L^2(\R^2)}^{1-1/k}. $$
    Now, by interpolation, for $2k\leq 2(k+r)\leq 2(k+1)$, we get
    $$\|\varphi\|_{L^{2(k+r)}(\R^2)} \leq (k!)^{1/(k+r)}(k+1)^{r/(k+r)}\|\varphi\|_{L^2(\R^2)}^{1/(k+r)}
    \|\nabla \varphi\|^{1-1/(k+r)}_{L^2(\R^2)},$$
    and thus
    \begin{align*}
        \|e^{\theta(k+r-1)/(2(k+r))}u\|_{L^{2(k+r)}(e^{\theta})}  = \|\varphi\|_{L^{2(k+r)}(\R^2)} \leq \left(k!\right)^{1/(k+r)}(k+1)^{r/(k+r)}\|u\|_{L^2(e^{\theta})}^{1/(k+r)}
    \|u\|^{1-1/(k+r)}_{H^1(e^{\theta})}.
    \end{align*}
    \item For $d\geq 3$, proceed similarly and use~\cite{Talenti1976BestInequality} (see also~\cite[Lemma 4.17]{Escobedo1987VariationalEquation}). \qedhere
    \end{itemize}
\end{proof}

\begin{cor}\label{cor:nonlin}
    For all $u \in H^1(\mu)$,
    $$\|u^p\|_{L^2(\mu)}\leq \|e^{(p-1)|x|^2/8}u^p\|_{L^2(\mu)} \leq c(d, p)\|u\|^p_{H^1(\mu)},$$
    where
    \begin{itemize}
        \item for $d = 1$, $p \in [1, \infty)$, $c(1, p) = 2^{(5p-3)/4}$;
        \item for $d = 2$, $p= k +r \in [1, \infty)$ with $k \in \N^*$ and $r\in [0,1)$, $c(2, k+r) = (2\sqrt{\pi})^{k+r-1}k!(k+1)^r$;
    \item for $d\geq 3$,  $p \in [1, d/(d-2)]$ and 
    $$c(d, p) = \left(\frac{2}{d}\right)^{p/2} \left\{\left(\frac{2}{d-2}\right)^{d/4}\frac{\Gamma(d)^{1/2}}{\Gamma(d/2)}\right\}^{p-1}.$$
    \end{itemize}
\end{cor}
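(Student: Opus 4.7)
The plan is to reduce the statement directly to Lemma~\ref{lem:CartEmb} applied with exponent $2p$ in place of $p$, and then to invoke the Poincaré inequality~\eqref{eqn:Poincineq} to absorb the remaining $L^2(\mu)$ factor into the $H^1(\mu)$ factor. First I observe that, writing $\theta(x)=|x|^2/4$ and $Z=2^{d-1}\omega_{d-1}$, the weight in $\mu$ differs from $e^{\theta}$ only by the constant $1/Z$, so $\|u\|_{L^q(\mu)}=Z^{-1/q}\|u\|_{L^q(e^{\theta})}$ for every $q\in[2,\infty)$, and similarly $\|u\|_{H^1(\mu)}=Z^{-1/2}\|u\|_{H^1(e^{\theta})}$. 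Next, a direct computation gives
\begin{equation*}
\|u^p\|_{L^2(\mu)}=\|u\|_{L^{2p}(\mu)}^{p},\qquad
\|e^{(p-1)|x|^2/8}u^p\|_{L^2(\mu)}=\|e^{\theta(1/2-1/(2p))}u\|_{L^{2p}(\mu)}^{p},
\end{equation*}
which rewrites the quantity we want to bound as (a power of) precisely the left-hand side of~\eqref{eq:CartEmb} with exponent $2p$.

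Then I apply Lemma~\ref{lem:CartEmb} with $p$ replaced by $2p$: the admissibility conditions on $p$ in the corollary are exactly tailored so that $2p$ falls in the admissible ranges there ($[2,\infty]$ for $d=1$, $[2,\infty)$ for $d=2$, $[2,2d/(d-2)]$ for $d\geq 3$). This yields
\begin{equation*}
\|e^{(p-1)|x|^2/8}u^p\|_{L^2(\mu)}
\;\leq\; Z^{(p-1)/2}\,C(d,2p)^{p}\,\|u\|_{L^2(\mu)}^{pa}\,\|u\|_{H^1(\mu)}^{p(1-a)},
\end{equation*}
with $a=1+d\bigl(\tfrac{1}{2p}-\tfrac12\bigr)$, after folding the $Z^{-1/2}$ prefactor of the $L^{2p}(\mu)$-norm together with the $Z^{1/2}$ factors produced by converting $\|u\|_{L^2(e^{\theta})}$ and $\|u\|_{H^1(e^{\theta})}$ back to $\mu$-norms. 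Applying the Poincaré inequality~\eqref{eqn:Poincineq} to replace $\|u\|_{L^2(\mu)}^{pa}$ by $(2/d)^{pa/2}\|u\|_{H^1(\mu)}^{pa}$ gives the stated bound with
\begin{equation*}
c(d,p)\;=\;Z^{(p-1)/2}\,C(d,2p)^{p}\,(2/d)^{pa/2}.
\end{equation*}

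It then remains to simplify this product in each dimension, using $Z=2$ for $d=1$, $Z=4\pi$ for $d=2$, and $Z=2^{d}\pi^{d/2}/\Gamma(d/2)$ for $d\geq 3$, together with the explicit $C(d,2p)$ from Lemma~\ref{lem:CartEmb}. The only real obstacle here is purely a book-keeping one: a careful collection of the exponents of $2$, $\pi$, $d$, $(d-2)$, $(d-1)!$ and $\Gamma(d/2)$ that arise from $Z^{(p-1)/2}$, from $C(d,2p)^p$ (which contributes the factors with exponents $d(p-1)/4$ and $(p-1)/2$), and from $(2/d)^{pa/2}$. With $a=1+d/(2p)-d/2$, the powers of $\pi$ cancel, and an elementary rearrangement shows that the remaining terms combine to $2^{(5p-3)/4}$, to $(2\sqrt{2\pi})^{p-1}k!(k+1)^r$ (splitting $2p=2(k+r)$ as in the case $d=2$ of Lemma~\ref{lem:CartEmb}), and to $(2/d)^{p/2}\bigl\{(2/(d-2))^{d/4}\Gamma(d)^{1/2}/\Gamma(d/2)\bigr\}^{p-1}$ in the three respective cases, which are the announced constants.
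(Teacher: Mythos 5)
Your proof is correct and follows essentially the same route as the paper's: reduce to Lemma~\ref{lem:CartEmb} with exponent $2p$, convert between $\mu$- and $e^{\theta}$-norms via $Z$, apply Poincaré~\eqref{eqn:Poincineq}, and then simplify; the intermediate formula $c(d,p)=Z^{(p-1)/2}C(d,2p)^p(2/d)^{pa/2}$ is exactly the paper's. (One minor imprecision: the phrase ``the powers of $\pi$ cancel'' applies only to $d\geq 3$; for $d=2$ a $\pi$ survives in the final $(2\sqrt{2\pi})^{p-1}$, which you do state correctly.)
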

\begin{proof}
    Noting that $\mu = e^\theta/Z$ where $Z = 2^{d-1}\omega_{d-1}$ where $\omega_{d-1} = 2\pi^{d/2}/\Gamma(d/2)$ and applying Lemma~\ref{lem:CartEmb} followed by the Poincaré inequality~\eqref{eqn:Poincineq} yields
    \begin{align*}
        \|u^p\|_{L^2(\mu)} &= \|e^{(p-1)\theta/2}u^p\|_{L^2(\mu)}\\
        &=\frac{1}{\sqrt{Z}} \|e^{\theta(1/2-1/2p)}u\|^p_{L^{2p}(e^\theta)}\\
        &\leq \frac{\left(C(d,2p)\sqrt{Z}\right)^p}{\sqrt{Z}}\left(\sqrt{\frac{2}{d}}\right)^{p+d(1/2-p/2)} \|u\|^p_{H^1(\mu)} \\
        &=  \left(C(d,2p)\right)^p \left(\frac{2^d \pi^{d/2}}{\Gamma(d/2)}\right)^{\frac{p-1}{2}}\left(\frac{2}{d}\right)^{p/2+d(1/4-p/4)} \|u\|^p_{H^1(\mu)} .
    \end{align*}
    One then just has to check that
    \begin{equation*}
        c(d, p) = \left(C(d,2p)\right)^p \left(\frac{2^d \pi^{d/2}}{\Gamma(d/2)}\right)^{\frac{p-1}{2}}\left(\frac{2}{d}\right)^{p/2+d(1/4-p/4)}. \qedhere
    \end{equation*}
\end{proof}

While the $H^1(\mu)\to L^2(\mu)$ embeddings will prove useful for some intermediate estimates, our CAP will mainly be conducted in $H^2(\mu)$.  We derive the required embeddings with explicit constants below.

\begin{lem}
\label{lem:phiH2}
    For all $f\in L^2(\mu)$, there exists a unique $u \in H^2(\mu)$ such that $\cL u = f$, and $\cL^{-1} : L^2(\mu) \rightarrow H^2(\mu) $ is bounded, so that $\|\cdot\|_{H^2(\mu)} :=  \|\mathcal{L}\cdot\|_{L^2(\mu)}$ indeed induces the $H^2(\mu)$ topology. Furthermore, if $u\in H^2(\mu)$ and $\varphi = e^{|x|^2/8}u$, then $\varphi\in H^2(\R^d)$ and
    \begin{equation}\label{eqn:Hessian-ineq}
        \|D^2 \varphi\|_{L^2(\R^d)} = \|\Delta\varphi\|_{L^2(\R^d)} \leq\sqrt{Z}\|u\|_{H^2(\mu)},
    \end{equation}
    where 
    $\|D^2 \varphi\|_{L^2(\R^d)} = \left(\sum_{i,j=1}^d\|\partial_{x_ix_j}\varphi\|_{L^2(\R)}^2\right)^{1/2}$ denotes the Frobenius norm of the Hessian $D^2\varphi$ of $\varphi$.
\end{lem}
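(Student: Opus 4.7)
The plan is to split the lemma into the invertibility/norm-equivalence statement and the harmonic-oscillator-type inequality, attacking each with distinct tools: spectral theory for the former, and the change of unknowns $\varphi = e^{|x|^2/8}u$ for the latter.

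For invertibility and norm equivalence, I would exploit the spectral decomposition of $\mathcal{L}$. The operator is positive and self-adjoint on $L^2(\mu)$ by \eqref{eq:IPP_mu}, and the eigenbases already introduced show that its spectrum is $\{d/2 + n : n \in \mathbb{N}\}$, bounded below by $d/2 > 0$. Hence $\mathcal{L}^{-1}$ exists on all of $L^2(\mu)$, is bounded (with $L^2(\mu) \to L^2(\mu)$ operator norm at most $2/d$), and its range is $\mathcal{D}(\mathcal{L}) = H^2(\mu)$, as noted in \eqref{eq:IPP_mu}. With the candidate norm $\|u\|_{H^2(\mu)} := \|\mathcal{L}u\|_{L^2(\mu)}$, the identity $\|\mathcal{L}^{-1}f\|_{H^2(\mu)} = \|f\|_{L^2(\mu)}$ is tautological, so $\mathcal{L}^{-1}: L^2(\mu)\to H^2(\mu)$ is bounded. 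The equivalence of $\|\mathcal{L}\cdot\|_{L^2(\mu)}$ with the standard $H^2(\mu)$-norm then follows from the open mapping theorem applied to the bounded bijection $\mathcal{L}: H^2(\mu) \to L^2(\mu)$.

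For the main inequality, set $H := -\Delta + V$ with $V(x) := |x|^2/16 + d/4$, so that \eqref{eqn:unknown-change} reads $\mathcal{L}u = e^{-|x|^2/8}H\varphi$ and thus $Z\|u\|_{H^2(\mu)}^2 = \|H\varphi\|_{L^2(\R^d)}^2$. Since $u \in H^2(\mu)$ entails $\varphi, H\varphi \in L^2(\R^d)$, classical harmonic-oscillator theory places $\varphi$ in $H^2(\R^d) \cap L^2(|x|^4 \d x)$, so that $\Delta\varphi$ and $V\varphi$ each lie in $L^2(\R^d)$, and the identity $\|D^2\varphi\|_{L^2(\R^d)} = \|\Delta\varphi\|_{L^2(\R^d)}$ follows from two integrations by parts (first on Schwartz functions, then by density). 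Expanding $\|H\varphi\|^2 = \|\Delta\varphi\|^2 + \|V\varphi\|^2 - 2\langle \Delta\varphi, V\varphi\rangle$, integrating by parts, and using $\nabla V = x/8$ together with \eqref{eqn:useful-id}, I obtain
\begin{equation*}
\|H\varphi\|_{L^2(\R^d)}^2 - \|\Delta\varphi\|_{L^2(\R^d)}^2 = \|V\varphi\|_{L^2(\R^d)}^2 + 2\int_{\R^d} V|\nabla\varphi|^2 \d x - \frac{d}{8}\int_{\R^d} \varphi^2 \d x.
\end{equation*}

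The main obstacle is showing this right-hand side is nonnegative. For $d \geq 2$ the bound is immediate: $V \geq d/4$ yields $V^2 \geq d^2/16 \geq d/8$, so the $\|V\varphi\|_{L^2(\R^d)}^2$ term alone absorbs the negative contribution. The delicate case is $d = 1$, where the pointwise estimate $V^2 \geq d/8$ fails; there I would invoke the harmonic-oscillator uncertainty inequality $\int|\varphi'|^2 \d x + \tfrac{1}{16}\int x^2\varphi^2 \d x \geq \tfrac{1}{4}\int\varphi^2 \d x$ (i.e.~the spectral-gap estimate $\langle H\varphi, \varphi\rangle \geq (d/2)\|\varphi\|_{L^2(\R)}^2$ specialised to $d=1$) to redistribute part of the $2\int V|\varphi'|^2 \d x$ term and absorb the missing $(d/8)\int\varphi^2 \d x$, completing the bound uniformly in the dimension.
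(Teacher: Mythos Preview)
Your proposal is correct and follows essentially the same route as the paper: both invoke the change of unknowns~\eqref{eqn:unknown-change} to rewrite $\|\mathcal{L}u\|_{L^2(e^{|x|^2/4})}^2$ as $\|H\varphi\|_{L^2(\R^d)}^2$, expand the square, integrate the cross term by parts using~\eqref{eqn:useful-id}, and read off the Hessian bound. The paper offloads the invertibility/norm-equivalence part to a citation (Kavian), whereas you give the short spectral argument directly; and you are more explicit about the $d=1$ case via the harmonic-oscillator spectral gap, a step the paper glosses over when passing to~\eqref{eqn:full-H2-ineq}.
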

\begin{proof}
    The qualitative version of this lemma is the content of~\cite[Lemma 2.1.(vii)]{Kavian1987RemarksEquation} (The additional fact that $\varphi\in H^2(\R^d)$ follows from~\cite[Theorem IX.25]{Brezis2011FunctionalEquations}). Given the bounds obtained in the proof of~\cite[Lemma 2.1.(vii)]{Kavian1987RemarksEquation}, for all $u\in H^2(\mu)$ the estimates below hold by density for $\varphi = e^{|x|^2/8}u$. Using~\eqref{eqn:unknown-change}, we have that
    \begin{align*}
        \int_{\R^d}\left(\cL u\right)^2e^{|x|^2/4}\d x &= \int_{\R^d}\left[\left|\Delta \varphi\right|^2+\left(\frac{d}{4}+\frac{|x|^2}{16}\right)\varphi^2\right]\d x -\int_{\R^d}\varphi\Delta\varphi\left(\frac{d}{2}+\frac{|x|^2}{8}\right)\d x.
    \end{align*}
    Now, by integration by parts, the last term simplifies to
    \begin{align*}
         -\int_{\R^d}\varphi\Delta\varphi\left(\frac{d}{2}+\frac{|x|^2}{8}\right)\d x &= \int_{\R^d}\left|\nabla \varphi\right|^2\left(\frac{d}{2}+\frac{|x|^2}{8}\right)\d x+\int_{\R^d}\varphi\left(\frac{x}{4}\cdot\nabla\varphi\right)\d x\\
         &=\int_{\R^d}\left|\nabla \varphi\right|^2\left(\frac{d}{2}+\frac{|x|^2}{8}\right)\d x-\frac{d}{8}\int_{\R^d}\varphi^2\d x\qquad \mbox{using~\eqref{eqn:useful-id}.}
    \end{align*}

    Adding back this term, we get
    \begin{equation}\label{eqn:full-H2-ineq}
        \int_{\R^d}\left(\cL u\right)^2e^{|x|^2/4}\d x \geq \int_{\R^d}\left|\Delta \varphi\right|^2\d x +\frac{d}{2}\int_{\R^d}\left|\nabla \varphi\right|^2\d x+\frac{d}{8}\int_{\R^d}\varphi^2\d x,
    \end{equation}
    and in particular
    $$\|D^2\varphi\|_{L^2(\R^d)} = \|\Delta\varphi\|_{L^2(\R^d)}\leq \|\cL u\|_{L^2(e^{|x|^2/4})} \leq \sqrt{Z}\|u\|_{H^2(\mu)},$$
    where the first equality follows from~\cite[Lemma 2.5, Corollary 2.13]{Spina2005EquazioniLp}, since $\varphi\in H^2(\mathbb{R}^d)$.
\end{proof}
Applying a similar argument as in Lemma~\ref{lem:CartEmb}, we then obtain the following embeddings.
\begin{cor}\label{cor:H2-emb}
    We have the following Sobolev embeddings:
    \begin{itemize}
        \item[(i)]\label{item:holder2} If $d\in\{1,2,3\}$, $u \in H^2(\mu) \implies e^{|x|^2/8}u \in L^{\infty}(\R^d)$. 
        \item[(ii)] If $d=4$, $H^2(\mu) \subset L^p(\mu)$ for all $p \in [2, \infty)$.
        \item[(iii)] If $d\geq 5$, $H^2(\mu) \subset L^p(\mu)$ for all $p \in [2, 2d/(d-4)]$.
    \end{itemize}
\end{cor}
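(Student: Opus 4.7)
The plan is to transfer the classical Sobolev embeddings of $H^2(\R^d)$ (without weights) to the weighted setting via the change of unknown $\varphi = e^{|x|^2/8}u$, relying on the fact that Lemma~\ref{lem:phiH2} has already packaged the hard analytic content, namely that $u\in H^2(\mu)$ implies $\varphi\in H^2(\R^d)$ with $\|\varphi\|_{L^2(\R^d)}$, $\|\nabla\varphi\|_{L^2(\R^d)}$ and $\|\Delta\varphi\|_{L^2(\R^d)}$ all controlled by $\|u\|_{H^2(\mu)}$ through~\eqref{eqn:full-H2-ineq}. In particular, $\varphi\in H^2(\R^d)$.

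Given this, I would split the argument by dimension. For $d\in\{1,2,3\}$ the condition $2>d/2$ gives the subcritical Morrey-type embedding $H^2(\R^d)\hookrightarrow L^\infty(\R^d)$, which applied to $\varphi$ yields $e^{|x|^2/8}u\in L^\infty(\R^d)$ and settles (i). For $d=4$ the critical embedding gives $H^2(\R^4)\hookrightarrow L^p(\R^4)$ for every $p\in[2,\infty)$, while for $d\geq 5$ two applications of the Gagliardo--Nirenberg--Sobolev inequality give $H^2(\R^d)\hookrightarrow L^{2d/(d-4)}(\R^d)$, which interpolated with $L^2(\R^d)$ covers every $p\in[2,2d/(d-4)]$.

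The final step is to transfer an unweighted $L^p(\R^d)$ bound on $\varphi$ back to a weighted $L^p(\mu)$ bound on $u$. For $p\geq 2$, the elementary pointwise inequality $e^{(2-p)|x|^2/8}\leq 1$ gives
\begin{equation*}
\int_{\R^d}|u|^p e^{|x|^2/4}\,\d x = \int_{\R^d}|\varphi|^p e^{(2-p)|x|^2/8}\,\d x \leq \int_{\R^d}|\varphi|^p\,\d x,
\end{equation*}
so that $\|u\|_{L^p(\mu)}\leq Z^{-1/p}\|\varphi\|_{L^p(\R^d)}$, from which (ii) and (iii) follow at once. I do not expect any real obstacle, since the nontrivial step (preservation of $H^2$ regularity under the exponential reweighting) is already done in Lemma~\ref{lem:phiH2}; the only point deserving a moment of care is that the restriction $p\geq 2$ is precisely what makes the weighted-to-unweighted comparison work in the correct direction, so that the admissible exponents in the weighted statement match those of the classical unweighted embeddings.
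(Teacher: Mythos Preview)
Your proposal is correct and follows essentially the same approach as the paper: transfer to $\varphi=e^{|x|^2/8}u\in H^2(\R^d)$ via Lemma~\ref{lem:phiH2} and apply the classical unweighted Sobolev embeddings of $H^2(\R^d)$ (the paper cites \cite[Corollary IX.13]{Brezis2011FunctionalEquations}). Your explicit pointwise comparison $e^{(2-p)|x|^2/8}\leq 1$ to pass from $L^p(\R^d)$ back to $L^p(\mu)$ is a useful detail that the paper leaves implicit.
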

\begin{proof}
    The proof follows from the application of the Sobolev embeddings given by~\cite[Corollary IX.13]{Brezis2011FunctionalEquations} on $\varphi = e^{|x|^2/8}u \in H^2(\R^d)$.
\end{proof}
\begin{rmk}
    Note that these embeddings are sufficient to cover the range of exponent $p$ for the semilinear heat equation for which rapidly decaying self-similar solutions are known to exist~\cite{Escobedo1987VariationalEquation}.
\end{rmk}
We now provide an explicit embedding constant for the case that we will need for our examples, namely (i) in Corollary~\ref{cor:H2-emb}, for $d\in\{2,3\}$ (for $d=1$, one can use the $H^1(\mu)$-embedding from Corollary~\ref{cor:nonlin} together with~\eqref{eqn:Poincineq2}).
\begin{lem}\label{lem:infty-bound}
    Let $d\in \{2, 3\}$, then there exists $C(d)>0$ such that for all $u\in H^2(\mu)$

    $$\|e^{|x|^2/8}u\|_{\infty}\leq C(d) \|u\|_{H^2(\mu)},$$
    where $C(d)$ can be chosen as
    $$C(d) = \sqrt{Z}\left(\frac{2}{d}C_0 +\sqrt{\frac{2}{d}}C_1 +C_2\right),$$
    with
    $$\begin{tabular}{c c c c}
        $C_0 = 0.56419$, & $C_1 = 0.79789$,& $C_2 = 0.23033$, & if $d=2;$\\
        $C_0 = 0.52319$, & $C_1 = 1.0228$, & $C_2 = 0.37467$, & if $d=3.$
    \end{tabular}$$
\end{lem}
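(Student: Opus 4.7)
The plan is to recast the inequality in terms of $\varphi := e^{|x|^2/8}u$, which by Lemma~\ref{lem:phiH2} belongs to $H^2(\R^d)$, so the problem reduces to an explicit $H^2(\R^d)\hookrightarrow L^\infty(\R^d)$ Sobolev embedding (valid for $d\leq 3$). Once such an embedding is obtained in the form
\begin{equation*}
\|\varphi\|_\infty\leq C_0\|\varphi\|_{L^2(\R^d)}+C_1\|\nabla\varphi\|_{L^2(\R^d)}+C_2\|\Delta\varphi\|_{L^2(\R^d)},
\end{equation*}
the conclusion follows by translating each Euclidean norm of $\varphi$ back to a weighted norm of $u$.

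More precisely, from $\mu=e^{\theta}/Z$ one reads off $\|\varphi\|_{L^2(\R^d)}=\sqrt{Z}\|u\|_{L^2(\mu)}$; inequality~\eqref{eqn:Lions-ineq} in the proof of Lemma~\ref{lem:CartEmb} yields $\|\nabla\varphi\|_{L^2(\R^d)}\leq\sqrt{Z}\|u\|_{H^1(\mu)}$; and Lemma~\ref{lem:phiH2} gives $\|\Delta\varphi\|_{L^2(\R^d)}\leq\sqrt{Z}\|u\|_{H^2(\mu)}$. Combining these with the Poincaré inequality~\eqref{eqn:Poincineq2} and the fact that $d/2$ is the smallest eigenvalue of $\mathcal{L}$ (whence $\|u\|_{L^2(\mu)}\leq (2/d)\|u\|_{H^2(\mu)}$ and $\|u\|_{H^1(\mu)}\leq\sqrt{2/d}\|u\|_{H^2(\mu)}$) produces exactly the announced form of $C(d)$.

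To prove the core three-term embedding with explicit constants, I would work on the Fourier side using $\|\varphi\|_\infty\leq(2\pi)^{-d/2}\|\widehat{\varphi}\|_{L^1}$ together with a Cauchy-Schwarz estimate against a positive weight $w(\xi)$:
\begin{equation*}
\|\widehat{\varphi}\|_{L^1}\leq\|w^{-1}\|_{L^2(\R^d)}\,\|w\widehat{\varphi}\|_{L^2(\R^d)}.
\end{equation*}
A natural choice is $w(\xi)=\alpha+\beta|\xi|^2$, for which $\|w\widehat{\varphi}\|_{L^2}=\|(\alpha-\beta\Delta)\varphi\|_{L^2}$ expands, after integration by parts, as $\alpha^2\|\varphi\|_{L^2}^2+2\alpha\beta\|\nabla\varphi\|_{L^2}^2+\beta^2\|\Delta\varphi\|_{L^2}^2$ and is therefore controlled by $\alpha\|\varphi\|_{L^2}+\sqrt{2\alpha\beta}\|\nabla\varphi\|_{L^2}+\beta\|\Delta\varphi\|_{L^2}$; the factor $\|w^{-1}\|_{L^2(\R^d)}$ reduces to a one-dimensional radial integral computable in closed form for $d=2,3$. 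This delivers the desired inequality with a triple $(C_0,C_1,C_2)$ depending on the ratio $\alpha/\beta$.

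The main obstacle is extracting precisely the specific numerical constants displayed in the statement: the single-parameter family $\alpha+\beta|\xi|^2$ above produces an inequality of the correct shape but the resulting triple need not coincide with the announced values. To match them one must either optimise the weight within a richer family (for instance $\alpha+\beta|\xi|+\gamma|\xi|^2$, whose inverse $L^2$ norm still reduces to a rational radial integral), or split the Fourier integral into low, intermediate and high-frequency regions and apply a distinct Cauchy-Schwarz bound on each piece (using the $L^2$ norms of $1$, $|\xi|^{-1}$, $|\xi|^{-2}$ on the respective regions). A rigorous numerical evaluation of the resulting integrals, entirely compatible with the computer-assisted framework developed elsewhere in the paper, then produces the stated $C_0,C_1,C_2$.
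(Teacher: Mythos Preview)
Your proposal is correct and takes essentially the same approach as the paper: pass to $\varphi=e^{|x|^2/8}u\in H^2(\R^d)$, invoke an explicit three-term $H^2(\R^d)\hookrightarrow L^\infty(\R^d)$ embedding, and convert each Euclidean norm of $\varphi$ back to a weighted norm of $u$ via $\|\varphi\|_{L^2(\R^d)}=\sqrt{Z}\|u\|_{L^2(\mu)}$, \eqref{eqn:Lions-ineq}, \eqref{eqn:Hessian-ineq} and the Poincar\'e inequalities. The only difference is that the paper does not derive the constants $C_0,C_1,C_2$ but simply quotes them from \cite[Example~6.12.(a)]{Nakao2019NumericalEquations} (where the third term is written with $\|D^2\varphi\|_{L^2(\R^d)}$, equal to $\|\Delta\varphi\|_{L^2(\R^d)}$ by Lemma~\ref{lem:phiH2}); your Fourier--Cauchy--Schwarz argument is precisely the kind of computation behind that reference, so the ``obstacle'' you identify is resolved by citation rather than by optimisation.
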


\begin{proof}
Letting once again $\varphi = e^{|x|^2/8}u$, we have that $\varphi\in H^2(\R^d)$ by Lemma~\ref{lem:phiH2}. We can thus use the embedding from $H^2(\R^d)$ to $L^\infty(\R^d)$, with the explicit constants provided in~\cite[Example 6.12.(a)]{Nakao2019NumericalEquations}:
\begin{align*}
    \Vert \varphi\Vert_{\infty} \leq C_0 \Vert \varphi\Vert_{L^2(\R^d)} + C_1 \Vert \nabla\varphi\Vert_{L^2(\R^d)} + C_2 \Vert D^2\varphi\Vert_{L^2(\R^d)}.
\end{align*}
Then, noticing that $\Vert u\Vert_{L^2(\mu)} \leq \frac{2}{d} \Vert u\Vert_{H^2(\mu)}$ (by a direct computation or by combining~\eqref{eqn:Poincineq} and~\eqref{eqn:Poincineq2}), we get
\begin{align*}
    \Vert \varphi\Vert_{L^2(\R^d)} = \sqrt{Z}\Vert u\Vert_{L^2(\mu)} \leq \sqrt{Z}\frac{2}{d}\Vert u\Vert_{H^2(\mu)}.
\end{align*}
Similarly, starting from~\eqref{eqn:Lions-ineq} and using once more~\eqref{eqn:Poincineq2}, we get
\begin{align*}
    \Vert \nabla \varphi\Vert_{L^2(\R^d)} \leq \sqrt{Z}\Vert u\Vert_{H^1(\mu)} \leq \sqrt{Z}\sqrt{\frac{2}{d}}\Vert u\Vert_{H^2(\mu)},
\end{align*}
and~\eqref{eqn:Hessian-ineq} provides the last missing inequality.
\end{proof}

\subsection{The fixed-point theorem}
\label{sec:fixedpoint}

As is common in the CAP literature, our approach will rely on a fixed-point theorem allowing for the \emph{a posteriori validation} of an appropriate numerical solution $\bar{u}$, that is, a proof of the existence of a solution $u^{\star}$ to
\begin{equation}\label{eqn:gen-poisson}
    \cL u = f(x, u, \nabla u),
\end{equation}
in an explicit neighbourhood of $\bar{u}$ in a Banach space $\mathcal{X}$. This is achieved by constructing a well-chosen map $T : \mathcal{X}\to \mathcal{X}$ with solutions of Eq.~\eqref{eqn:gen-poisson} as its fixed points and which is contracting in a neighbourhood of $\bar{u}\in \mathcal{X}$. In order to actually prove that $T$ is a contraction, we will use the following statements (see~\cite{Nakao2019NumericalEquations,vandenBerg2021SpontaneousFlow,Yamamoto1998ATheorem} and the references therein for variants).

\begin{thm}\label{thm:fixed-point}
    Let $(\mathcal{X}, \|\cdot\|_{\mathcal{X}})$ be a Banach space, let $T:\mathcal{X}\to \mathcal{X}$ be a $\mathcal{C}^1$-map and let $\bar{u}\in \mathcal{X}$ be such that there exists a constant $Y\geq0$ and an increasing continuous function $\mathcal{Z}:\mathbb{R}_+\to \R_+$ such that
    \begin{align}
        \|T(\bar{u})-\bar{u}\|_{\mathcal{X}}&\leq Y,\label{eqn:Ycond}\\
        \|DT(u)\|_{\mathcal{X}, \mathcal{X}}&\leq \mathcal{Z}(\|u-\bar{u}\|_{\mathcal{X}})\qquad \mbox{for all $u\in \mathcal{X}$.}\label{eqn:Zcond}
    \end{align}
    If there exists $\delta>0$ such that
    \begin{align}
        Y + \int_0^{\delta}\mathcal{Z}(s)\d s &< \delta, \label{eqn:condinto}\\
        \mathcal{Z}(\delta)<1, \label{eqn:condcontract}
    \end{align}
    then $T$ has a unique fixed point $u^{\star}$ in $\bar{B}(\bar{u},\delta)$.
\end{thm}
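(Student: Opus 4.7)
The plan is to apply the Banach fixed-point theorem on the closed ball $\bar{B}(\bar{u},\delta)$, which requires two things: that $T$ maps this ball into itself, and that $T$ is a contraction on it. Both will follow from a mean-value estimate combined with the hypotheses~\eqref{eqn:condinto}--\eqref{eqn:condcontract}, exploiting the monotonicity of $\mathcal{Z}$.

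First, since $T$ is $\mathcal{C}^1$, for any $u, v \in \mathcal{X}$ the fundamental theorem of calculus gives
\begin{equation*}
T(u) - T(v) = \int_0^1 DT\bigl(v + t(u-v)\bigr)(u-v)\,\mathrm{d}t.
\end{equation*}
I would use this in two ways. To prove self-mapping, I fix $u \in \bar{B}(\bar{u},\delta)$, write $T(u) - \bar{u} = \bigl(T(u) - T(\bar{u})\bigr) + \bigl(T(\bar{u}) - \bar{u}\bigr)$, and bound the first piece by applying the integral identity with $v = \bar{u}$, using~\eqref{eqn:Zcond} and the monotonicity of $\mathcal{Z}$ to get
\begin{equation*}
\|T(u) - T(\bar{u})\|_{\mathcal{X}} \leq \int_0^1 \mathcal{Z}\bigl(t\|u-\bar{u}\|_{\mathcal{X}}\bigr)\|u-\bar{u}\|_{\mathcal{X}}\,\mathrm{d}t = \int_0^{\|u-\bar{u}\|_{\mathcal{X}}} \mathcal{Z}(s)\,\mathrm{d}s \leq \int_0^{\delta}\mathcal{Z}(s)\,\mathrm{d}s,
\end{equation*}
after the change of variables $s = t\|u-\bar{u}\|_{\mathcal{X}}$. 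Combined with~\eqref{eqn:Ycond} and~\eqref{eqn:condinto}, this yields $\|T(u) - \bar{u}\|_{\mathcal{X}} \leq Y + \int_0^{\delta}\mathcal{Z}(s)\,\mathrm{d}s < \delta$, so $T\bigl(\bar{B}(\bar{u},\delta)\bigr) \subset \bar{B}(\bar{u},\delta)$.

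For the contraction property, I take $u_1, u_2 \in \bar{B}(\bar{u},\delta)$ and apply the same integral identity with $v = u_2$, $u = u_1$. The key observation is that for every $t \in [0,1]$ the point $u_2 + t(u_1-u_2)$ lies in $\bar{B}(\bar{u},\delta)$ by convexity, so $\|u_2 + t(u_1-u_2) - \bar{u}\|_{\mathcal{X}} \leq \delta$, and monotonicity of $\mathcal{Z}$ together with~\eqref{eqn:Zcond} gives $\|DT(u_2 + t(u_1-u_2))\|_{\mathcal{X},\mathcal{X}} \leq \mathcal{Z}(\delta)$. Hence
\begin{equation*}
\|T(u_1) - T(u_2)\|_{\mathcal{X}} \leq \mathcal{Z}(\delta)\|u_1 - u_2\|_{\mathcal{X}},
\end{equation*}
and by~\eqref{eqn:condcontract} the map $T$ is a strict contraction on the complete metric space $\bar{B}(\bar{u},\delta)$. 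The Banach fixed-point theorem then delivers the unique fixed point $u^{\star} \in \bar{B}(\bar{u},\delta)$.

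There is no real obstacle here: the statement is essentially a parametric version of Banach's theorem tailored for use with the radii-polynomial style bounds $Y$ and $\mathcal{Z}$ common in CAPs. The only step deserving a moment of care is the change of variables giving the integral $\int_0^\delta \mathcal{Z}(s)\,\mathrm{d}s$, since this is exactly what makes condition~\eqref{eqn:condinto} sharp (as opposed to the cruder bound $Y + \delta\mathcal{Z}(\delta)<\delta$ that one would obtain by using $\mathcal{Z}(\delta)$ as a uniform Lipschitz constant throughout the ball); exploiting the monotonicity of $\mathcal{Z}$ inside the integral is what produces a weaker, and thus more useful, sufficient condition.
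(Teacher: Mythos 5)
Your proof is correct, and it is precisely the standard argument: the paper itself only cites \cite{Breden2019RigorousPaths} rather than giving details, but the argument there (and in the related radii-polynomial literature) is exactly this one — the mean-value inequality over the segment, the change of variables $s = t\|u-\bar u\|_{\mathcal X}$ to land on $\int_0^\delta \mathcal{Z}$, self-mapping from~\eqref{eqn:condinto}, contraction from~\eqref{eqn:condcontract} plus convexity of the ball, and Banach's theorem. Your closing remark about why the integral form of the condition is sharper than the cruder $Y+\delta\mathcal{Z}(\delta)<\delta$ is also the right way to think about why the hypotheses are phrased as they are.
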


\begin{proof}
    See for instance~\cite[Theorem 2.1]{Breden2019RigorousPaths}
\end{proof}

{If $T$ is polynomial in $u$ of degree $p$, we reformulate condition~\eqref{eqn:Zcond} into bounding $D^kT(\bar{u})$ for $k \in \{1, \ldots, p\}$.}

\begin{cor}\label{cor:fixed-point}
    Let $T:\mathcal{X}\to \mathcal{X}$ be a polynomial of degree $p$ and let $\bar{u}\in \mathcal{X}$ be such that there exist $Y, Z_1, \ldots, Z_p\geq 0$ such that
    \begin{align}
        \|T(\bar{u})-\bar{u}\|_{\mathcal{X}}&\leq Y,\\
        \|D^kT(\bar{u})h^k\|_{\mathcal{X}} &\leq Z_k\|h\|_{\mathcal{X}}^k \qquad \mbox{for all $h\in\mathcal{X}$ and $k\in \{1, \ldots, p\}$.}
    \end{align}
    Introducing the radii polynomials
    \begin{align*}
        P(\delta) &= Y - \delta +\sum_{k=1}^p\frac{Z_k}{k!}\delta^k,\\
        Q(\delta) &= -1 +\sum_{k=1}^p\frac{Z_k}{(k-1)!}\delta^{k-1},
    \end{align*}
   if there exists $\delta>0$ such that $P(\delta)<0$, then denoting $\underline{\delta}$ the smallest positive root of $P$ and $\bar{\delta}$ the unique positive root of $Q$, for all $\delta \in (\underline{\delta}, \bar{\delta})$ $T$ has a unique fixed point $u^{\star} \in \bar{B}(\bar{u},\delta) \subset \mathcal{X}$.
\end{cor}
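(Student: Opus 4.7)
The plan is to reduce the corollary to Theorem~\ref{thm:fixed-point} by exploiting the polynomial structure of $T$. Since $T$ is a polynomial of degree $p$, the exact Taylor expansion around $\bar{u}$ reads
\begin{equation*}
T(u) = T(\bar{u}) + \sum_{k=1}^{p} \frac{1}{k!} D^k T(\bar{u})[(u-\bar{u})^{\otimes k}],
\end{equation*}
and differentiating term by term yields $DT(u) = \sum_{k=1}^{p} \frac{1}{(k-1)!} D^k T(\bar{u})[(u-\bar{u})^{\otimes (k-1)}]$. Taking operator norms and using the assumed bounds on $\|D^kT(\bar{u})\|$, I would obtain
\begin{equation*}
\|DT(u)\|_{\mathcal{X},\mathcal{X}} \leq \sum_{k=1}^{p} \frac{Z_k}{(k-1)!} \|u-\bar{u}\|_{\mathcal{X}}^{k-1} =: \mathcal{Z}(\|u-\bar{u}\|_{\mathcal{X}}),
\end{equation*}
where $\mathcal{Z}$ is continuous and increasing on $\mathbb{R}_+$ (since the $Z_k$ are non-negative). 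This verifies~\eqref{eqn:Zcond} of Theorem~\ref{thm:fixed-point} with this explicit $\mathcal{Z}$, and~\eqref{eqn:Ycond} is given.

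Next, I would translate~\eqref{eqn:condinto} and~\eqref{eqn:condcontract} into statements about $P$ and $Q$. A direct integration gives $\int_0^{\delta} \mathcal{Z}(s)\,\d s = \sum_{k=1}^{p} \frac{Z_k}{k!}\delta^k$, so~\eqref{eqn:condinto} becomes $P(\delta) < 0$. Likewise $\mathcal{Z}(\delta) - 1 = Q(\delta)$, so~\eqref{eqn:condcontract} becomes $Q(\delta) < 0$. (I will also note that the hypothesis as written, ``$P(\delta)>0$'', should be read as ``$P(\delta)<0$'' for some $\delta>0$; otherwise $\underline{\delta}$ cannot exist, since $P(0)=Y\geq 0$ and $P(\delta)\to +\infty$.) Thus the proof reduces to exhibiting a $\delta > 0$ at which both polynomials are negative.

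The key structural observation is that $Q = P'$: differentiating $P$ term by term gives exactly the expression defining $Q$. From this, the interval $(\underline{\delta},\bar{\delta})$ claimed in the statement is indeed non-empty and gives what is needed. Indeed, since $P(0)=Y\geq 0$, $P(\underline{\delta})=0$ and $P$ goes strictly below zero immediately to the right of $\underline{\delta}$, one has $P'(\underline{\delta})=Q(\underline{\delta})\leq 0$; and because the coefficients of $Q$ in degree $\geq 1$ are non-negative while $Q(0)=Z_1-1$, the polynomial $Q$ is non-decreasing and has at most one positive root, namely $\bar{\delta}$, with $Q<0$ on $[0,\bar{\delta})$ and $Q>0$ beyond. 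Hence $\underline{\delta}<\bar{\delta}$, and on $(\underline{\delta},\bar{\delta})$ we have $P' = Q < 0$, so $P$ is strictly decreasing from $P(\underline{\delta})=0$, which forces $P<0$ on that interval. Both~\eqref{eqn:condinto} and~\eqref{eqn:condcontract} therefore hold for every $\delta\in(\underline{\delta},\bar{\delta})$, and Theorem~\ref{thm:fixed-point} delivers the unique fixed point in $\bar{B}(\bar{u},\delta)$.

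There is no real obstacle here beyond bookkeeping; the only subtle point is the relation $Q=P'$, which ensures that the interval $(\underline{\delta},\bar{\delta})$ produced by the two separately defined radii polynomials is automatically non-degenerate and entirely contained in the region where both sufficient conditions of Theorem~\ref{thm:fixed-point} are satisfied.
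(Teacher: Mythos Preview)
Your proof is correct and follows the natural route: reduce to Theorem~\ref{thm:fixed-point} via the exact Taylor expansion, identify $\mathcal{Z}$ explicitly, and use the relation $Q=P'$ together with the convexity of $P$ (implicit in $Q'\geq 0$) to show that the interval $(\underline{\delta},\bar{\delta})$ is non-empty and contained in $\{P<0\}\cap\{Q<0\}$. The paper itself does not spell out a proof but merely cites \cite[Corollary 4.4]{Breden2019RigorousPaths}, so your argument is in fact more detailed than what appears here; it is essentially the standard derivation one finds in that reference. Your observation that the hypothesis ``$P(\delta)>0$'' must be a typo for ``$P(\delta)<0$'' is also correct.
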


\begin{proof}
    See for instance~\cite[Corollary 4.4]{Breden2019RigorousPaths}.
\end{proof}

We now explain how to construct suitable operator $T$ in order to study solutions of equation~\eqref{eqn:gen-poisson}.
Provided $g:u \mapsto f(\cdot,u,\nabla u)$ maps $H^2(\mu)$ to $L^2(\mu)$, our general problem
$$\cL u = f(x,u,\nabla u)$$
can be reformulated as finding a zero of the map $F :H^2(\mu) \to H^2(\mu)$, where
$$F(u) = u - \mathcal{L}^{-1}f(x,u,\nabla u).$$
Note that if $Dg(\bar{u})$ in fact maps $ H^2(\mu)$ to $H^1(\mu)$, then $DF(\bu)$ is a compact perturbation of the identity. In particular, and this is a key aspect common to many such computer-assisted proofs, we can expect to find an accurate approximate inverse of $DF(\bu)$ which is also a compact perturbation of the identity. This motivates the introduction of the linear map $A :H^2(\mu)\to H^2(\mu)$ such that $P_n A P_n = A_n$ is an approximate inverse of $P_nDF(\bar{u})P_n$ computed numerically, and where $P_n A(I-P_n) =(I-P_n)AP_n = 0 $ and $(I-P_n)A(I-P_n) = I-P_n$. That is, using the block decomposition associated with the orthogonal projection $P_n$, we have
\begin{align*}
    A = 
    \begin{pmatrix}
       A_n & 0 \\
       0 & I
    \end{pmatrix},
\end{align*}
and we expect such an $A$ to be accurate approximate inverse of $DF(\bu)$ if $n$ is large enough. This statement will be made quantitative in Section~\ref{sec:boundsZ1}, by leveraging the explicit compactness estimate~\eqref{eq:projection_error_L2H2}, which allows us to obtain a computable upper bound for $\left\Vert I-ADF(\bu)\right\Vert_{H^2(\mu),H^2(\mu)}$ .

We therefore consider the quasi-Newton operator $T : H^2(\mu) \longrightarrow H^2(\mu)$ with
$$T(u) =  u - AF(u),$$ 
which we expect to be contracting near $\bu$.
Provided $A$ is invertible (this will be an automatic consequence of verifying assumption~\eqref{eqn:condcontract}), zeros of $F$ are indeed in one-to-one correspondence with fixed points of $T$. In Sections~\ref{sec:heat} to~\ref{sec:Burger}, we derive suitable estimates $Y$ and $\mathcal{Z}$ (or $Z_k$) allowing us to apply Theorem~\ref{thm:fixed-point} or Corollary~\ref{cor:fixed-point} to this fixed-point operator $T$ for the different equations mentioned in the introduction.

\begin{rmk}
    One could choose to perform a computer-assisted proof in $H^1(\mu)$ instead of $H^2(\mu)$, but the class of treatable problems would be more restricted and this would not directly yield $L^{\infty}$-estimates on the solution in dimension $d \in \{2, 3\}$.
\end{rmk}

\section{Handling nonlinearities via quadrature rules}
\label{sec:quadrature}

To derive the bounds $Y$ and $\mathcal{Z}$ needed for our CAP (See Theorem~\ref{thm:fixed-point}), we will need to rigorously compute quantities like $\|\bar{u}^p\|_{L^2(\mu)}$. In the more classical case of Fourier series, $\bar{u}$ is typically a finite Fourier series (i.e.~a trigonometric polynomial), and $\bar{u}^p$ is then still a finite Fourier series (provided $p$ is an integer), whose integral or norm is therefore straightforward to compute exactly. In contrast, in our setting, whenever $u\neq 0$ and $v\neq 0$ belong to $P_n(H^2(\mu))$, the product $uv$ does not belong to $P_{n'}(H^2(\mu))$ for any $n'\in\N$. However, we can still compute exactly $\|uv\|_{L^2(\mu)}$ (or $\|\bar{u}^p\|_{L^2(\mu)}$), and we explain how this can be done in this section. 

For clarity, we focus here on the case of the radial equation 
$$\mathcal{L}u =u/(p-1) -\varepsilon u^p,\quad \varepsilon=\pm 1,\ p\in \N,$$
giving rise to self-similar solutions of the semilinear heat equation, but all the constructions can be adapted to the other examples. We denote by $\{\hat{\psi}_m\}_{m=0}^{\infty}$ the normalisation of the orthogonal basis $\{\psi_m\}_{m = 0}^{\infty}$ (see Corollary~\ref{cor:radial_basis}) with respect the $L^2(\mu)$-inner product $\langle \cdot, \cdot\rangle$, i.e.

$$\hat{\psi}_m(r) = \sqrt{\frac{m!}{\Gamma(m+\alpha+1)}}L^{(\alpha)}_m(r^2/4)e^{-r^2/4}, \qquad \alpha = \frac{d}{2}-1.$$

We will solve our problem with respect to this basis and look for our numerical approximation $\bar{u}$ in $\mathrm{Span}\{\hat{\psi}_m\}_{m=0}^n$ (which is the set of radial functions in $P_n(H^2(\mu))$). We can then write $\bar{u}$ in terms of its ``Fourier'' coefficients:
\begin{equation}\label{eqn:ubar-id}
    \bar{u} = \sum_{m = 0}^n \bar{u}_m \hat{\psi}_m \qquad \bar{\mathbf{u}} = (\bar{u}_0, \ldots,\bar{u}_n).
\end{equation}
Since our computer-assisted proof will mostly rely on a Hilbertian approach, one of the main challenges is to be able to compute many projections and integrals (eventually rigorously) on unbounded domains. Such endeavour might appear naive given the lack of simple error control on Gauss--Laguerre quadrature and the notorious difficulties of working with this family of bases~\cite{Funaro1990ComputationalApproximations}. Furthermore, we would ideally like to solve (via gradient descent or Newton's method) the Galerkin problem
\begin{equation}\label{eqn:galerkin}
    \left\langle \mathcal{L}\bar{u} -\bar{u}/(p-1) +\varepsilon\bar{u}^p, \hat{\psi}_j\right\rangle = 0 \qquad \mbox{for all $j \in \{0, \ldots, n\}$},
\end{equation}
without further approximating the problem. We show that this can in fact be achieved relatively efficiently for a nonlinearity of moderate degree by simply adapting the Gauss--Laguerre quadrature. Consider more generally the problem of evaluating (eventually rigorously) integrals of the form
$$\int_{\R^d}\prod_{k=1}^m f_k(x) \mu(x)\d x,$$
for $f_k \in \mathrm{Span}\{\hat{\psi}_m\}_{m=0}^n$. For instance, $\|\bar{u}^p\|_{L^2(\mu)}$ can be written in this form, but also the coefficients of $P_n(\bar{u}^p)$ given by $\langle \bar{u}^p,\hat{\psi}_m \rangle$ for $m=0,\ldots,n$. Since we are in the radial setting, this boils down to evaluating integrals of the form
\begin{equation}
\label{eq:intergrals}
    \int_{0}^{\infty}\prod_{k=1}^m f_k(r) (r/2)^{d-1}e^{r^2/4}\d r,
\end{equation}
with $f_k(r) = p_k(r^2/4) e^{-r^2/4}$ for some polynomial $p_k$ of degree at most $n$. Note that all calculations can be thought to be done with respect to the variable $z = r^2/4$.
\begin{rmk}
    When we have to deal with non-radial solutions, we also use spherical coordinates, and the projections and integrals with respect to the spherical variables only involve sines and cosines and thus do not require a quadrature. This is another advantage of using spherical coordinates instead of cartesian ones (i.e., of using the bases of Definition~\ref{def:spherical-basis} instead of~\eqref{eqn:hermite-basis}), for which nontrivial quadratures would be required in every direction.%
\end{rmk}
We now show how to exactly compute integrals of the form~\eqref{eq:intergrals}.
\begin{lem}\label{lem:quad}
    Let $p_1, \ldots, p_m$ be polynomials of degree at most $n$ and let $N\in\mathbb{N}$ be such that $2N-1 \geq mn$. Then, for $\alpha>-1$ and $\beta>0$
    \begin{equation}
    \int_0^{\infty}z^{\alpha}e^{-\beta z}\prod_{k=1}^m p_k(z)\d z = \beta^{-\alpha-1}\sum_{i=1}^N W_i\prod_{k=1}^mp_k\left(\frac{z_i}{\beta}\right), \label{eqn:quad}
    \end{equation}
    where $z_i$ denotes the $i$\textsuperscript{th} root of $L^{(\alpha)}_N$ and
$$W_i = \frac{\Gamma(N+\alpha+1)z_i}{N!(N+1)^2[L^{(\alpha)}_{N+1}(z_i)]^2} \qquad i = 1, \ldots, N.$$
\end{lem}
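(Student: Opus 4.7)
The plan is to reduce the statement to the classical Gauss--Laguerre quadrature formula: for any polynomial $q$ of degree at most $2N-1$, one has
\begin{equation*}
\int_0^{\infty} z^{\alpha} e^{-z} q(z)\,\d z = \sum_{i=1}^N W_i\, q(z_i),
\end{equation*}
with the nodes $z_i$ being the roots of $L_N^{(\alpha)}$ and with the weights $W_i$ given by the formula in the statement (see any standard reference on orthogonal polynomials). This exactness is the essential ingredient, and the proposed proof will only assemble two elementary observations on top of it.

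First, I would observe that the product $\prod_{k=1}^m p_k$ is a polynomial of degree at most $mn$, and by assumption $mn \leq 2N-1$, so it falls within the regime where Gauss--Laguerre quadrature is exact. Second, I would handle the parameter $\beta > 0$ by the change of variable $u = \beta z$, which yields
\begin{equation*}
\int_0^{\infty} z^{\alpha} e^{-\beta z} q(z)\,\d z = \beta^{-\alpha-1}\int_0^{\infty} u^{\alpha} e^{-u} q(u/\beta)\,\d u.
\end{equation*}
Since $u \mapsto q(u/\beta)$ is again a polynomial of the same degree as $q$, applying the quadrature formula above to the right-hand side immediately gives
\begin{equation*}
\int_0^{\infty} z^{\alpha} e^{-\beta z} q(z)\,\d z = \beta^{-\alpha-1}\sum_{i=1}^N W_i\, q(z_i/\beta).
\end{equation*}
Specialising to $q = \prod_{k=1}^m p_k$ yields exactly~\eqref{eqn:quad}.

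There is no genuine obstacle here; the result is essentially a bookkeeping statement packaging Gauss--Laguerre exactness in a form convenient for products of several factors and for a rescaled exponential weight. If anything, the only point worth being careful about is the precise normalisation of $W_i$, which depends on the chosen convention for $L_N^{(\alpha)}$; this can be checked by comparing with a standard reference or by direct computation on the test family $\{z^k\}_{k=0}^{2N-1}$.
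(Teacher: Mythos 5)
Your proof is correct and matches the paper's approach: the paper's own proof likewise consists of the change of variables $z \mapsto z/\beta$ combined with the exactness of the $N$-point Gauss--Laguerre quadrature for polynomials of degree at most $2N-1$, applied to the product polynomial $\prod_{k=1}^m p_k$.
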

\begin{proof}
This follows from using the chain rule for the change of variables $z \mapsto z/\beta$ and the $N$\textsuperscript{th} Gauss--Laguerre quadrature (see~\cite[Formula 4.1]{Funaro1990ComputationalApproximations}) which is exact for polynomials of degree $2N-1$ or less.
\end{proof}

In our implementation, $p_k$ will usually be written as
$$p_k = \sum_{j=0}^n a^k_j L^{(\alpha)}_j/\|\psi_j\|_{L^2(\mu)}.$$
Thus, the above quadrature rule would be written as
$$\int_0^{\infty}z^{\alpha}e^{-\beta z}\prod_{k=1}^m p_k(z)\d z = \beta^{-\alpha-1}\sum_{i=1}^N W_i\prod_{k=1}^m\sum_{j=0}^n a^k_j L^{(\alpha)}_j\left(\frac{z_i}{\beta}\right)= \beta^{-\alpha-1}\sum_{i=1}^N W_i\prod_{k=1}^m (V\mathbf{a}^k)_i,$$
where $\mathbf{a}^k = (a^k_j)_{0\leq j\leq n}$ and $V=(V_{ij})=(L^{(\alpha)}_j(z_i/\beta)/\|\psi_j\|_{L^2(\mu)})$ is a (pseudo)-Vandermonde matrix. However, this matrix has very large entries so that~\eqref{eqn:quad} as written needs to be computed in high precision arithmetic. When~\eqref{eqn:quad} needs to be evaluated fast and accurately, it proves better to use a regularised Vandermonde matrix $\bar{V} = \mathrm{Diag}(\beta^{-\alpha-1}W)^{1/m}V$ 
so that
    $$\int_0^{\infty}z^{\alpha}e^{-\beta z}\prod_{k=1}^m p_k(z)\d z = \sum_{i=1}^N \prod_{k=1}^m (\bar{V}\mathbf{a}^k)_i.$$
In practice, $\bar{V}$ can be precomputed once (using high precision if needed but then possibly stored in lower precision) and after this is done both the rigorous and non-rigorous computations can be carried out in a manageable amount of time. 

\begin{rmk}
    In practice, $\bar{V}$ proves much better behaved than $V$ and easier to work with. It could be interesting to further investigate the properties of $\bar{V}$.
\end{rmk}

\begin{ex}In both the rigorous and non-rigorous parts, one needs to compute the matrix
    $$G_{ij} = \langle \hat{\psi}_i, \bar{u}^{p-1}\hat{\psi}_j\rangle$$
    which can be written as $G = \bar{V}^{T}\mathrm{Diag}(\bar{V} \bar{\mathbf{u}})^{p-1}\bar{V}$ for $m =p+1$ and $\beta = p$ (and $\bar{\mathbf{u}}$ as in~\eqref{eqn:ubar-id}).
\end{ex}

Note that here the nonlinearity in the Galerkin problem is not approximated by its so-called \emph{pseudospectral} projection (usually obtained by collocation) found in the previous related literature~\cite{Ben-yuGuo2000HermiteEquations,Funaro1991ApproximationFunctions}.

Like the validation procedure, the quadrature is implemented rigorously in \texttt{Julia} using the package \texttt{IntervalArithmetic.jl}~\cite{david_p_sanders_2024_10459547}. Our implementation also relies on approximating the roots of $L^{(\alpha)}_N$ with high precision, using initial guesses given by \texttt{FastGaussQuadrature.jl}~\cite{Townsend2014FastGaussQuadrature.jl} and refining them using Newton's method in \texttt{BigFloat} arithmetic. The roots are then enclosed rigorously by combining the Fundamental Theorem of Algebra and the Intermediate Value Theorem (recall that the roots of orthogonal polynomials are always simple and real). $V$ is then computed by recursion using the three-term recurrence relation for orthogonal polynomials. Since some of the $z_i$ have large magnitude, this calculation quickly leads to large error accumulation, hence the need to enclose the $z_i$ with high precision. While this quadrature might appear expensive, in our experience for problems with $N\leq 2000$, the enclosure of the roots together with the computation of $\bar{V}$ only took a few minutes on a standard laptop.

Putting quadrature aside, the overall computational cost of the computer-assisted proofs presented in this paper is very similar to that of more standard proofs using Fourier series: the main limitations are memory constraints and matrix-matrix multiplications in interval arithmetic. The cost of computing the quadrature, i.e., the matrix $\bar{V}$, only becomes noticeable if $N$ has to be taken somewhat large. Indeed, increasing $N$ requires increasing the precision for the calculation of the quadrature rule, and more specifically for rigorously evaluating the polynomials $L_j^{(\alpha)}$, inevitably increasing computational time. Among all the computer-assisted proofs presented in this paper, only for the one of Theorem~\ref{thm:Burgers} did the quadrature have a predominant cost.

\begin{rmk}\label{rmk:basis}
   Hermite and Laguerre bases are far less popular than Fourier and Chebyshev for computer-assisted proofs, or in numerical analysis for this matter. One reason is the lack of simple and global error control on interpolation and quadrature. However, polynomials are well-controlled in the bulk, and the tails of $\hat{\psi}_m$ can be controlled via the following estimate of Szeg\"{o}~\cite[formula 22.14.13]{Abramowitz1970HandbookSeries}
    \begin{equation}\label{eqn:tail-estimate}
        |\hat{\psi}_m(r)| \leq \hat{\psi}_m(0) e^{-r^2/8} = \frac{1}{\Gamma(\alpha+1)}\sqrt{\frac{\Gamma(m+\alpha+1)}{m!}}e^{-r^2/8},
    \end{equation}
    if $\alpha = d/2-1\geq 0$, i.e., if $d\geq 2$. In particular, this estimate in theory allows for integration with rigorous error control, say by bluntly integrating in the bulk via interval arithmetic. Since our method will only rely on integration, it might handle more general nonlinearities. Note that this estimate immediately provides $L^{\infty}$-bounds on finite linear combinations of the $\hat{\psi}_m$'s.
\end{rmk}

\section{The semilinear heat equation}
\label{sec:heat}

In this section, we are interested in the semilinear heat equation
\begin{equation}
    \partial_t v = \Delta v -\varepsilon |v|^{p-1}v \qquad \mbox{on $\R^d$},\label{eqn:semiheat2}
\end{equation}
with $p>1$. It is well-known that if $u$ solves the nonlinear elliptic problem
\begin{equation}
    -\Delta u - \frac{x}{2}\cdot \nabla{u} -\frac{u}{p-1} +\varepsilon|u|^{p-1}u = 0 ,\label{eqn:carteq}
\end{equation}
then $v(t, x)= t^{-1/(p-1)}u(x/\sqrt{t})$ is a self-similar solution of~\eqref{eqn:semiheat2}. These solutions are relevant when studying the asymptotic behaviour of solutions to Eq.~\eqref{eqn:semiheat2}. Indeed, provided that $v(0, \cdot) \in H^1(\mu)$, they appear as (possibly unstable) ``omega-limit points''~\cite{Kavian1987RemarksEquation} of $v(t, \cdot)$ in the sense that there exists a solution $u$ to Eq.~\eqref{eqn:carteq} and $t_n\rightarrow\infty$ such that

$$ \lim_{n\to\infty} \left\|t_n^{1/(p-1)}v(t_n, \cdot) - u\left(\frac{\cdot}{\sqrt{t_n}}\right)\right\|_{\infty} = 0.$$

Let us first focus on positive radial solutions of Eq.~\eqref{eqn:carteq} (we will later show in Section~\ref{sec:positivity} that the solutions we obtain are indeed positive)
 which have been the subject of~\cite{Brezis1986AAbsorption, Peletier1986On0}, so that Eq.~\eqref{eqn:carteq} becomes
\begin{equation}
    \mathcal{L}u = -\partial_r^2 u -\left(\frac{d-1}{r}+\frac{r}{2}\right)\partial_r u = \frac{u}{p-1} -\varepsilon u^p.\label{eqn:rad-form}
\end{equation}
Furthermore, let us assume first for simplicity that $p\in \N$. An example with a non-integer power $p$ will be presented in Section~\ref{sec:heat-frac}, and non-radial solutions will be studied in Section~\ref{sec:heat-asym}.

\subsection{Suitable bounds for the validation}\label{sec:bounds}

In this section, we apply the general strategy outlined in Section~\ref{sec:fixedpoint} to Eq.~\eqref{eqn:rad-form}, and derive computable estimates $Y$ and $Z_k$ satisfying the assumptions of Corollary~\ref{cor:fixed-point}.
Since we are first looking for radial solutions, we restrict ourselves to the Hilbert space
$$\mathcal{H} = \overline{\mathrm{Span}\{\psi_m\}_{m = 0}^{\infty}}^{H^2}\subset H^2(\mu).$$
Throughout this section, we then assume $\bar{u}$ is an element of $\mathcal{H}_n:=\mathrm{Span}\{\psi_m\}_{m=0}^{n} \subset \mathcal{C}_0^{\infty}(\mathbb{R}_{+})$. In particular, $\|\bar{u}\|_{\infty}$ is finite and can be computed rigorously (or at least upper-bounded rigorously, say via the estimate~\eqref{eqn:tail-estimate}).
This quantity will appear in several of the estimates to come. 

In order to study Eq.~\eqref{eqn:rad-form}, we consider
\begin{align}
    F : \mathcal{H} &\longrightarrow \mathcal{H}, \nonumber\\
    u&\longmapsto u - \cL^{-1}\left(\frac{u}{p-1} - \varepsilon u^p\right),\label{eqn:rad-heat-F}
\end{align}
and $T$ defined as in Section~\ref{sec:fixedpoint}. Since we work in $\mathcal{H}$, the approximate inverse $A$ involved in $T$ is naturally taken to go from $\mathcal{H}$ to $\mathcal{H}$, so that $T:\mathcal{H} \to \mathcal{H}$. We obtain the following formulae for $DT^k(\bar{u})$ 
$$\begin{cases}
    DT(\bar{u})h &= h - ADF(\bar{u})h =  h - A(h - \mathcal{L}^{-1}( h/(p-1) -\varepsilon p \bar{u}^{p-1}h))\\
    D^kT(\bar{u})h^k &= -\varepsilon\dfrac{p!}{(p-k)!}A\mathcal{L}^{-1}(\bar{u}^{p-k}h^k) \qquad\qquad\qquad\qquad\qquad \qquad \qquad\qquad \qquad 1<k\leq p.
\end{cases}$$

\begin{notation}\label{not:2}
    Recall Notation~\ref{not:1} and Remark~\ref{rmk:Pn}. Our analysis will mostly be done in $\mu$-weighted spaces, and we may shorten the notation by omitting the $\mu$. Henceforth, $\|\cdot\|_{H^2}$ (resp. $\|\cdot\|_{H^1}$, $\|\cdot\|_{L^2}$) will always denote $\|\cdot\|_{H^2(\mu)}$ (resp. $\|\cdot\|_{H^1(\mu)}$, $\|\cdot\|_{L^2(\mu)}$). Furthermore, we denote by $\{\hat{\psi}_m\}_{m=0}^{\infty}$ the normalisation of the radial orthogonal basis $\{\psi_m\}_{m = 0}^{\infty}$ (introduced in Corollary~\ref{cor:radial_basis}) with respect the $L^2(\mu)$-inner product $\langle \cdot, \cdot\rangle$, and let $\lambda_m = d/2+m$ so that $\cL \hat{\psi}_m = \lambda_m \hat{\psi}_m$.
     We denote $\OP  = I-P_n$ the complementary projection to $P_n$ (in $L^2(\mu)$ or $H^2(\mu)$).
     Note that $P_n$, and therefore also $\OP$, leave $\mathcal{H}$ invariant, and we abuse notation and still denote by $P_n$ and $\OP$ their restriction to $\mathcal{H}$ so that $P_n:\mathcal{H}\to\mathcal{H}$ is the orthogonal projection onto $\mathcal{H}_n$.
    We also denote by $\iH= \OP(\mathcal{H})$ the orthogonal complement of $\mathcal{H}_n$ in $\mathcal{H}$. 
    Finally, since our implementation will be with respect to the $L^2(\mu)$-basis, in what follows, vectors in $\mathbb{R}^{n+1}$ are identified with the corresponding decomposition with respect to $\{\hat{\psi}_m\}_{m=0}^{n}$ and we aim at stating final estimates with respect to the $L^2(\mu)$-norm. 
\end{notation}

\subsubsection{The bound $Y$}

It suffices to estimate
\begin{align*}
    \|T(\bar{u}) - \bar{u}\|^2_{H^2} &= \|AF(\bar{u})\|_{H^2}^2\\
    &= \|P_n(AF(\bar{u}))\|^2_{H^2} +\|\OP (AF(\bar{u}))\|_{H^2}^2\\
    &= \|A_n(P_nF(\bar{u}))\|^2_{H^2}+\|\OP F(\bar{u})\|^2_{H^2}\\
    &= \|A_n(P_nF(\bar{u}))\|^2_{H^2}+\|\OP \mathcal{L}^{-1}(\bar{u}^p)\|^2_{H^2}\\
    &= \|A_n(P_nF(\bar{u}))\|^2_{H^2}+\left(\|\mathcal{L}^{-1}\bar{u}^p\|^2_{H^2}-\|P_n\mathcal{L}^{-1}(\bar{u}^p)\|^2_{H^2}\right).%
\end{align*}
Thus, we may choose
$$Y^2:= \|\mathcal{L}A_n(P_nF(\bar{u}))\|^2_{L^2}+\left(\|\bar{u}^p\|^2_{L^2}-\|P_n(\bar{u}^p)\|^2_{L^2}\right).$$

Note that all the terms in $Y$ can be evaluated rigorously by means of a quadrature rule (see Lemma~\ref{lem:quad}) and the output is expected to be small provided the truncation level $n$ is large enough. In particular, the first term can practically be made arbitrarily small in the non-rigorous computation of $\bar{u}$. 

\begin{rmk}
    Here and in many subsequent places, we slightly abuse notation, in the sense that the number $Y$ that we will actually compute is only an upper bound of the quantity $Y$ defined above. Indeed, we use interval arithmetic to control rounding errors, and in later estimates we also only upper-bound quantities like $\left\Vert \bu\right\Vert_\infty$ rather than trying to compute them \emph{exactly}.
\end{rmk}

\subsubsection{The bound $Z_1$}
\label{sec:boundsZ1}
    We choose to compute $Z_1$ by treating finite and infinite dimensional parts separately, and therefore take
    $$Z_1 := \left\|\left(\begin{matrix}
        Z^{11} & Z^{12}\\
        Z^{21} & Z^{22}
    \end{matrix}\right)\right\|_{2}$$
    where the $Z^{ij}$ are real numbers satisfying
    \begin{align}
    \label{eq:Z1ij}
        \begin{cases}
        \|P_n DT(\bar{u})P_n\|_{H^2, H^2} &\leq Z^{11},\\
        \|\OP DT(\bar{u})P_n\|_{H^2, H^2} &\leq Z^{21},\\
        \|P_n DT(\bar{u})\OP\|_{H^2, H^2} &\leq Z^{12},\\
        \|\OP DT(\bar{u})\OP\|_{H^2, H^2} &\leq Z^{22}.\\
        \end{cases}
    \end{align}
    Indeed, for $h\in \mathcal{H}$ with $\|h\|_{H^2} = 1$, write $h = h_n+\ih$ such that $h_n \in \mathcal{H}_n$, $\ih\in \iH$. %
    Then,
    \begin{align*}
        \|DT(\bar{u})h\|_{H^2} &= \sqrt{\|P_nDT(\bar{u})h\|_{H^2}^2+\|\OP DT(\bar{u})h\|_{H^2}^2}\\
        &\leq\sqrt{\left(\|P_nDT(\bar{u})h_n\|_{H^2}+\|P_nDT(\bar{u})\ih\|_{H^2}\right)^2+\left(\|\OP DT(\bar{u})h_n\|_{H^2}+\|\OP DT(\bar{u})\ih\|_{H^2}\right)^2}\\
        &\leq\sqrt{\left(Z^{11}\|h_n\|_{H^2}+Z^{12}\|\ih\|_{H^2}\right)^2+\left(Z^{21}\|h_n\|_{H^2}+Z^{22}\|\ih\|_{H^2}\right)^2}\\
        & = \left\Vert 
            \begin{pmatrix}
                Z^{11} & Z^{12}\\
                Z^{21} & Z^{22}
            \end{pmatrix}
            \begin{pmatrix}
                \|h_n\|_{H^2} \\
                \|\ih\|_{H^2}
            \end{pmatrix}
            \right\Vert_2 \\
        &\leq Z_1 \qquad \mbox{since $\|h_n\|_{H^2}^2 +\|\ih\|_{H^2}^2 = 1$.}
    \end{align*}
In the following subsections, we derive computable bounds $Z^{ij}$'s satisfying~\eqref{eq:Z1ij}.

\subsubsection{The bound $Z^{11}$}
    For $h_n\in \mathcal{H}_n$,
    $$P_nDT(\bar{u})h_n = P_n(I - ADF(\bar{u}))h_n = (I_n - A_n P_nDF(\bar{u})P_n)h_n$$
    by construction of $A$. Then, we get the estimate
    $$\|P_n DT(\bar{u})P_n\|_{H^2, H^2} = \|I_n - A_n P_nDF(\bar{u})P_n\|_{H^2,H^2} = \|\mathcal{L}(I_n - A_n P_nDF(\bar{u})P_n)\mathcal{L}^{-1}\|_{L^2,L^2}=: Z^{11},$$
which is expected to be small because $A_n\approx (P_nDF(\bar{u})P_n)^{-1}$. Using the isometric isomorphism between $L^2(\mu)$ and $\ell^2$, this quantity $Z^{11}$ is in fact nothing but the $2$-norm of an $(n+1)\times(n+1)$ matrix, whose entries can be evaluated rigorously. For $M \in \R^{(n+1)\times (n+1)}$, we may bound above its 2-norm with the estimate $\|M\|_2 \leq \sqrt{\|M\|_1\|M\|_{\infty}}$. While this estimate is not sharp in general, it is good enough for our purpose, as we only need to compute $\|M\|_2$ either when $M$ is very close to being zero (and we only need an estimate which is below $1$), or where $\|M\|_2$ is not the crucial estimate. 

\subsubsection{The bound $Z^{21}$}
    For $h_n \in \mathcal{H}_n$, we get that
    \begin{align*}
        \OP DT(\bar{u})h_n &= -\OP DF(\bar{u}) h_n\\
        &=\OP \mathcal{L}^{-1}( h_n/(p-1) -\varepsilon p \bar{u}^{p-1}h_n)\\
        &=-\varepsilon p\OP \mathcal{L}^{-1}(\bar{u}^{p-1}h_n),
    \end{align*}
    and thus,
    $$
    \|\OP DT(\bar{u})h_n\|_{H^2}= p\|\OP (\bar{u}^{p-1}h_n)\|_{L^2}.$$
    Then, writing $h_n \in \mathcal{H}_n$ as
    $$h_n = \sum_{m=0}^n a_m\hat{\psi}_m,$$
    we get
    $$ \OP (\bar{u}^{p-1}h_n) = \sum_{m=0}^n a_m \OP  (\bar{u}^{p-1}\hat{\psi}_m).$$
    Thus,
    \begin{align*}
        \|\OP (\bar{u}^{p-1}h_n)\|_{L^2} &\leq \sum_{m=0}^n\|a_m \OP  (\bar{u}^{p-1}\hat{\psi}_m)\|_{L^2}\qquad \mbox{by Minkowski inequality}\\
        &\leq \sum_{m=0}^n|a_m \lambda_m|\|\lambda^{-1}_m\OP  (\bar{u}^{p-1}\hat{\psi}_m)\|_{L^2}\\
        &\leq \left(\sum_{m=0}^n\lambda^{-2}_m\|\OP  (\bar{u}^{p-1}\hat{\psi}_m)\|^2_{L^2}\right)^{1/2}\left(\sum_{m=0}^n\lambda_m^2 a_m^2\right)^{1/2} \qquad \mbox{by Cauchy--Schwarz inequality}\\
        &= \left(\sum_{m=0}^n\lambda^{-2}_m\|\OP  (\bar{u}^{p-1}\hat{\psi}_m)\|^2_{L^2}\right)^{1/2}\|h_n\|_{H^2}.
    \end{align*}
    Introducing the vector $w \in \mathbb{R}^{n+1}$ with
    $$w_m := \|\OP  (\bar{u}^{p-1}\hat{\psi}_m)\|_{L^2} =  \left(\|\bar{u}^{p-1} \hat{\psi}_m\|^2_{L^2}-\|P_n(\bar{u}^{p-1} \hat{\psi}_m)\|^2_{L^2}\right)^{1/2},$$
    and identifying $w$ with an element of $P_n(L^2(\mu)) \subset L^2(\mu)$, we get
    $$\|\OP DT(\bar{u})P_n\|_{H^2, H^2} \leq p\|\mathcal{L}^{-1}w\|_{L^2} =:Z^{21}.$$
Note that $w_m$ is expected to be small, especially for small $m$, whereas for larger $m$ the $\lambda_m^{-2}$ factors also contribute to making $\|\mathcal{L}^{-1}w\|^2_{L^2} = \sum_{m=0}^n\lambda^{-2}_m w_m^2$ small.





    \subsubsection{The bound $Z^{12}$}
    Now, for $\ih \in \iH$,
            \begin{align*}
            P_n DT(\bar{u})\ih &= - P_n A DF(\bar{u}) \ih\\
            &=P_n A\mathcal{L}^{-1} \ih/(p-1) -\varepsilon pP_n A  \mathcal{L}^{-1}(\bar{u}^{p-1}\ih)\\
            &=\mathcal{L}^{-1} P_n\ih/(p-1) -\varepsilon pA_nP_n \mathcal{L}^{-1}(\bar{u}^{p-1}\ih)\\
            &= -\varepsilon p  A_n\mathcal{L}^{-1}P_n(\bar{u}^{p-1}\ih) \qquad \mbox{since $\ih \in \iH$.}%
            \end{align*}
    Then, for $m \in \{0,\ldots n\}$, we estimate
        \begin{align*}
        |\langle \bar{u}^{p-1}\ih, \hat{\psi}_m\rangle| &= |\langle \bar{u}^{p-1} \hat{\psi}_m, \ih\rangle|\\
            &=|\langle \OP( \bar{u}^{p-1} \hat{\psi}_m), \ih\rangle|\qquad \mbox{since $\ih \in \iH$}\\
            &\leq \|\OP (\bar{u}^{p-1} \hat{\psi}_m)\|_{L^2}\|\ih\|_{L^2}\qquad \mbox{by Cauchy--Schwarz inequality}\\
            &\leq \frac{\|\ih\|_{H^2}}{\lambda_{n+1}}\|\OP (\bar{u}^{p-1} \hat{\psi}_m)\|_{L^2} \qquad \mbox{by~\eqref{eq:projection_error_L2H2}}.
        \end{align*}
    Thus, reusing the vector $w$ introduced in the previous subsection,
    $$\|P_nDT(\bar{u})\OP \|_{H^2, H^2} \leq {\frac{p}{\lambda_{n+1}}\left\|\,\left|A_n \mathcal{L}^{-1}\right|w\right\|_{H^2} = \frac{p}{\lambda_{n+1}}\left\|\,\left|\mathcal{L}A_n \mathcal{L}^{-1}\right|w\right\|_{L^2}}=: Z^{12},$$
    where for $M\in \R^{(n+1)\times(n+1)}$, $|M| = (|M|_{ij}) = (|M_{ij}|)$.

    \subsubsection{The bound $Z^{22}$}
    For $\ih \in \iH$,

        $$\OP DT(\bar{u})\ih =  -\OP DF(\bar{u}) \ih=\OP \mathcal{L}^{-1}( (1/(p-1) -\varepsilon p \bar{u}^{p-1})\ih.$$
        Thus,
        \begin{align*}
            \|\OP DT(\bar{u})\ih\|_{H^2}&= \|(1/(p-1) -\varepsilon p \bar{u}^{p-1})\ih\|_{L^2}\\
            &\leq\|1/(p-1) -\varepsilon p \bar{u}^{p-1}\|_{\infty}\|\ih\|_{L^2}\qquad \mbox{by H\"{o}lder's inequality }\\
            &\leq \frac{(1/(p-1) +p \|\bar{u}\|^{p-1}_{\infty})}{\lambda_{n+1}}\|\ih\|_{H^2}\qquad \mbox{by~\eqref{eq:projection_error_L2H2}}.
        \end{align*}
Thus, we may choose

$$Z^{22} := \frac{1/(p-1) +p \|\bar{u}\|^{p-1}_{\infty}}{\lambda_{n+1}},$$
where $\|\bar{u}\|_{\infty}$ can be computed (or at least upper-bounded) using~\eqref{eqn:tail-estimate}.
Note that while $T: \mathcal{H}\rightarrow \mathcal{H}$ may not be compact, $DT(\bar{u}) :\mathcal{H}
\rightarrow \mathcal{H}$ is, provided $\bar{u}\in L^{\infty}(\R^d)$.

\subsubsection{The bounds $Z_k$ for $1<k\leq p$}

If $d\leq 2$, or $p\leq d/(d-2)$ for $d\geq 3$, we can use the $H^1$-Sobolev embeddings given by Lemma~\ref{lem:CartEmb}, which yields, for $h \in \mathcal{H}$,
    \begin{align*}
        \left\|D^kT(\bar{u})h^k\right\|_{H^2} &= \frac{p!}{(p-k)!}\left\|A\mathcal{L}^{-1}(\bar{u}^{p-k}h^k)\right\|_{H^2}\\
        &\leq \frac{p!}{(p-k)!}\|A\mathcal\|_{H^2,H^2}\|\mathcal{L}^{-1}(\bar{u}^{p-k}h^k)\|_{H^2}\\
        &\leq \frac{p!}{(p-k)!}\|\mathcal{L}A\mathcal{L}^{-1}\|_{L^2,L^2}\|\bar{u}^{p-k}h^k\|_{L^2}\\
        &\leq \frac{p!\|\bar{u}\|_{\infty}^{p-k}}{(p-k)!}\|\mathcal{L}A\mathcal{L}^{-1}\|_{L^2,L^2}\|h^k\|_{L^2} \qquad \mbox{by H\"{o}lder's inequality}\\
        &\leq \frac{p!\|\bar{u}\|_{\infty}^{p-k}}{(p-k)!}\|\mathcal{L}A\mathcal{L}^{-1}\|_{L^2,L^2}c(d, k)\|h\|_{H^1}^k\\
        &\leq \frac{p!\|\bar{u}\|_{\infty}^{p-k}}{(p-k)!}\|\mathcal{L}A\mathcal{L}^{-1}\|_{L^2,L^2}c(d, k)\lambda_0^{-k/2}\|h\|_{H^2}^k,
    \end{align*}
    where $c(d, k)$ is given by Corollary~\ref{cor:nonlin} and we have used~\eqref{eqn:Poincineq2}. Thus, we may choose
    \begin{equation}
        \label{eq:Zk1}
    Z_k := \frac{p!\|\bar{u}\|_{\infty}^{p-k}}{(p-k)!}\lambda_0^{-k/2}c(d, k)\|\mathcal{L}A\mathcal{L}^{-1}\|_{L^2,L^2},
    \end{equation}
    where 
    $$\|\mathcal{L}A\mathcal{L}^{-1}\|_{L^2,L^2}\leq \max\left\{\|\mathcal{L}A_n\mathcal{L}^{-1}\|_{L^2,L^2}, 1\right\}.$$
    Note that instead of using $\|\bar{u}\|_{\infty}$, for $k<p$, we may derive a bound only using the embedding $H^1(\mu)\hookrightarrow L^{2p}(\mu)$, but this is not expected to be as sharp.

    For $d \in \{2,3\}$, we may instead use Lemma~\ref{lem:infty-bound} relying on $H^2$-Sobolev embeddings
    \begin{align*}
        \left\|D^kT(\bar{u})h^k\right\|_{H^2} &\leq \frac{p!\|\bar{u}\|_{\infty}^{p-k}}{(p-k)!}\|\mathcal{L}A\mathcal{L}^{-1}\|_{L^2,L^2}\|h^k\|_{L^2}\\
        &\leq \frac{p!\|\bar{u}\|_{\infty}^{p-k}}{(p-k)!}\|\mathcal{L}A\mathcal{L}^{-1}\|_{L^2,L^2}\|h\|_{\infty}^{k-1}\|h\|_{L^2}\\
        &\leq \frac{p!\|\bar{u}\|_{\infty}^{p-k}}{\lambda_0(p-k)!}\|\mathcal{L}A\mathcal{L}^{-1}\|_{L^2,L^2}C(d)^{k-1}\|h\|_{H^2}^k,
    \end{align*}
where $C(d)$ is given by Lemma~\ref{lem:infty-bound}. Thus, we may also choose
    \begin{equation}
    \label{eq:Zk2}
    Z_k := \frac{p!\|\bar{u}\|_{\infty}^{p-k}}{\lambda_0(p-k)!}C(d)^{k-1}\|\mathcal{L}A\mathcal{L}^{-1}\|_{L^2,L^2}.
    \end{equation}
In practice, when $d \in \{2,3\}$, if both estimates are valid, we take for $Z_k$ the minimum between~\eqref{eq:Zk1} and~\eqref{eq:Zk2}.

\subsection{Results}
\label{sec:results_heat}

We first focus on the case $\varepsilon = -1 $ in~\eqref{eqn:rad-form}, such that the equation reads
\begin{equation}
    \mathcal{L}u = -\partial_r^2 u -\left(\frac{d-1}{r}+\frac{r}{2}\right)\partial_r u = \frac{u}{p-1} + u^p.\label{eqn:rad-form-plus}
\end{equation}
In the cases $(d, p) = (2, 3)$ and $(d, p) = (3,2)$, we find the numerical solutions plotted below with $n=500$. One could of course use fewer coefficients (i.e., take $n$ smaller), but the bound $Y$ measuring the residual error and then the error bound $\underline{\delta}$ would then become larger.

\begin{figure}[h]

\begin{subfigure}{0.5\textwidth}
\includegraphics[width=0.9\linewidth]{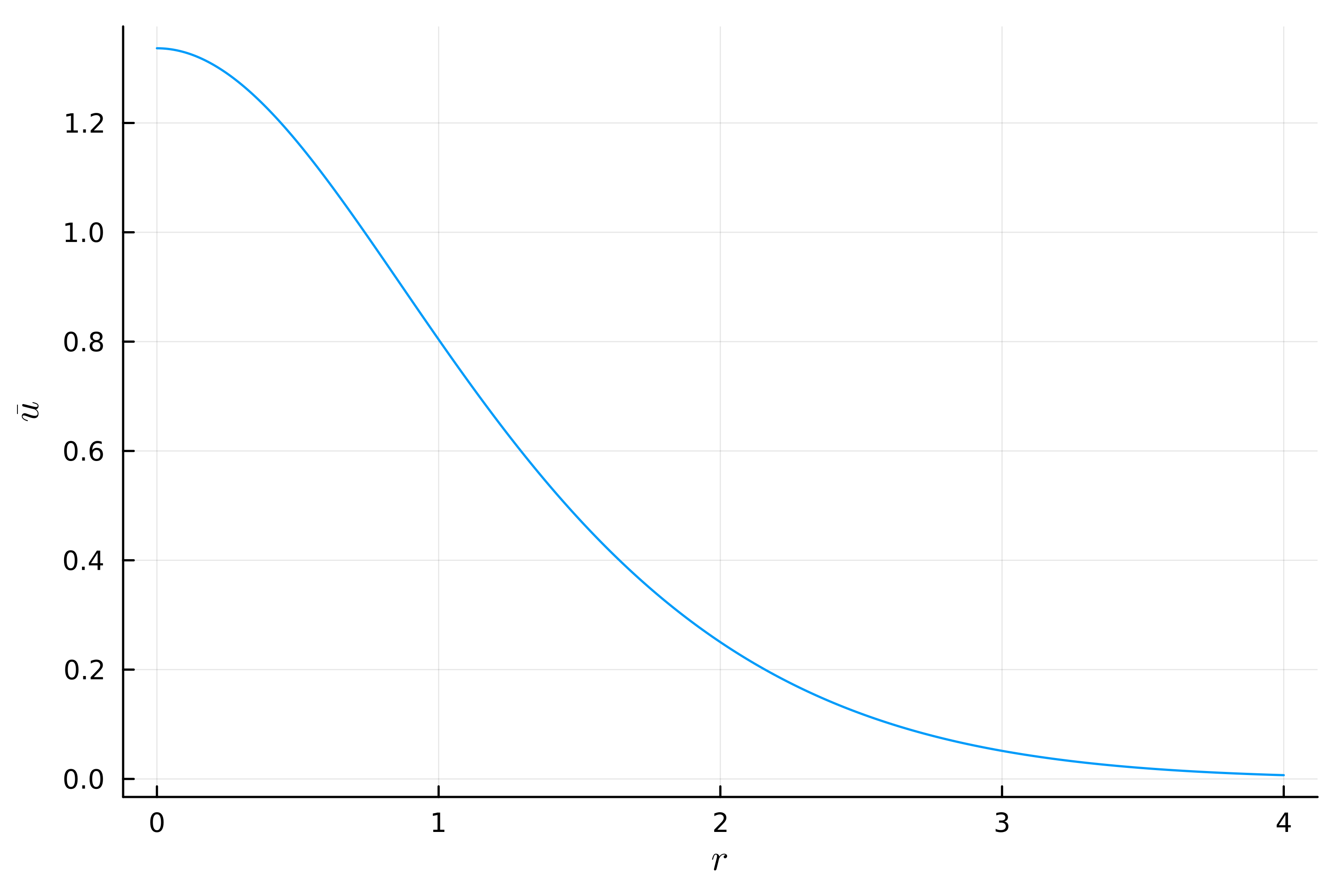} 
\caption{$d=2$, $p = 3$}
\label{fig:heat-rad-sol-2}
\end{subfigure}
\begin{subfigure}{0.5\textwidth}
\includegraphics[width=0.9\linewidth]{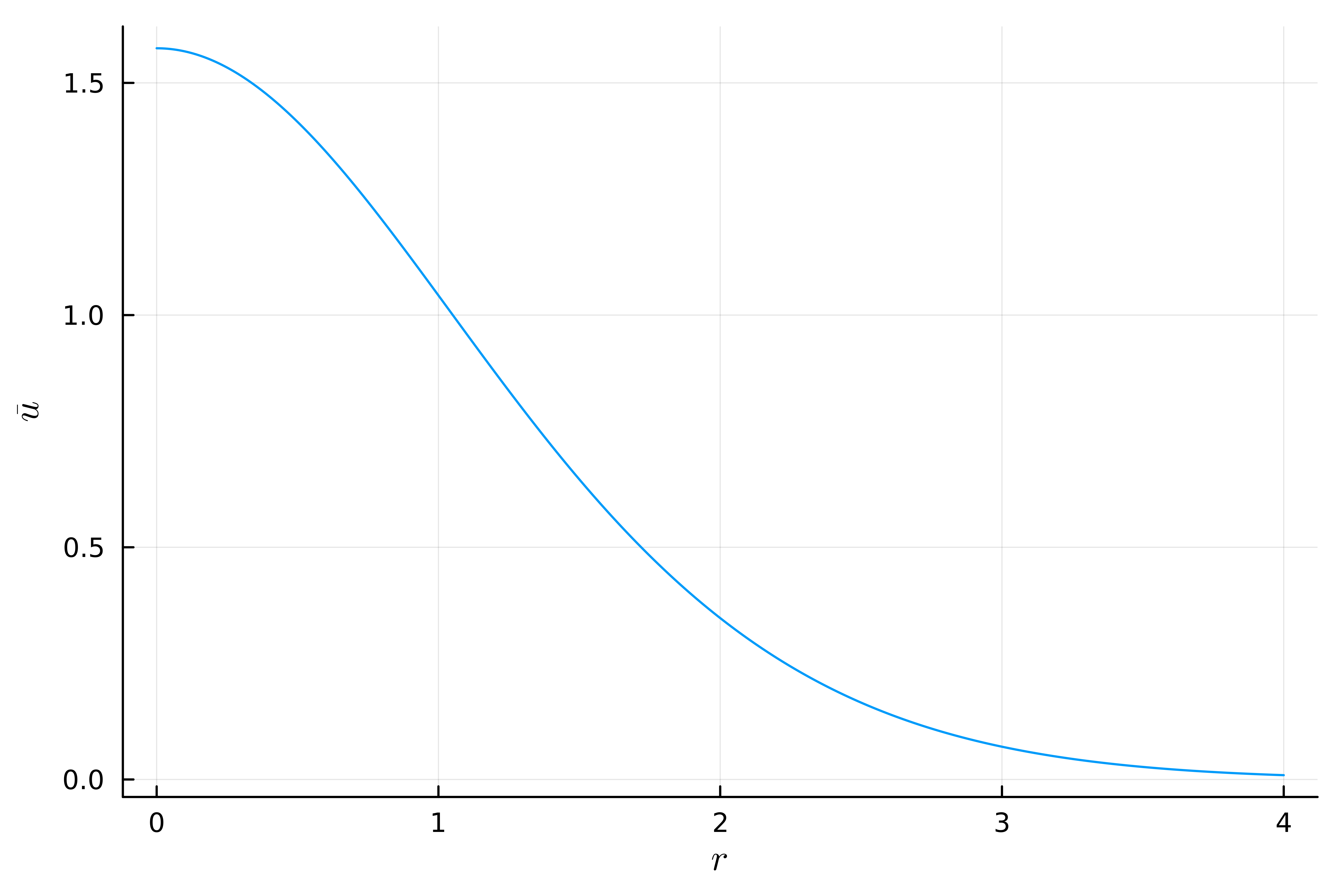}
\caption{$d=3$, $p=2$}
\label{fig:heat-rad-sol-3}
\end{subfigure}

\caption{Representation of the numerical approximate solutions $\bar{u}$ to Eq.~\eqref{eqn:rad-form-plus} which are validated in Theorem~\ref{thm:rad-int-sol}. Note that the spatial domain of course has to be truncated for the plot, but these approximate solutions are actually defined and computed on $[0,\infty)$ thanks to our choice of basis (see Corollary~\ref{cor:radial_basis}).\label{fig:heat-rad-sol}}
\end{figure}

\begin{table}[H]
\centering
\begin{tabular}{|c|c|c|}
    \hline
    & $\varepsilon = -1$, $d = 2$, $ p = 3$ & $\varepsilon = -1$, $d = 3$, $ p = 2$\\
    \hline
    $Y$ & $6.830397272197097\times10^{-13}$ & $1.4114627225937143\times 10^{-21}$\\
    \hline
    $Z_1$ & $0.009916643027658332$& $0.008621565176356588$ \\
    \hline
    $Z_2$ & $170.320283777825$& $17.04079355792355$ \\
    \hline
    $Z_3$ & $719.2983917297151$& 0 \\
    \hline
    $\underline{\delta}$ & $6.898810312173614\times 10^{-13}$ & $1.4237375688374744\times 10^{-21}$\\
    \hline
    $\bar{\delta}$ &$0.0057434127215418$ & $0.05817677630175133$\\
    \hline
\end{tabular}
\captionsetup{justification=centering}
\caption{Results for the a posteriori validation with the bounds given in Section~\ref{sec:bounds}\\ and $\bar{u}$ represented in Figure~\ref{fig:heat-rad-sol}.\label{tab:heat-rad-sol} 
}
\end{table}

\begin{thm}\label{thm:rad-int-sol}
    For $(d, p) = (2, 3)$ (resp. $(d, p) = (3,2)$), let $\bar{u}:\R_+\to\R$ be the radial function whose restriction to the interval is represented in Figure~\ref{fig:heat-rad-sol-2} (resp. Figure~\ref{fig:heat-rad-sol-3}) and whose precise description in terms of ``Fourier'' coefficients in the basis $\{\hat\psi_m\}_{m=0}^n$ is available at~\cite{Chu2024CodeOn}, then there exists a solution $u^{\star} \in \mathcal{H}$ to Eq.~\eqref{eqn:rad-form-plus} such that
    $$\|u^{\star} - \bar{u}\|_{H^2(\mu)}\leq \underline{\delta},$$
    where $\underline{\delta}$ is as in Table~\ref{tab:heat-rad-sol}. It is the only solution to Eq.~\eqref{eqn:rad-form-plus} in $\mathcal{H}$ such that $\|u^{\star} - \bar{u}\|_{H^2(\mu)}\leq \bar{\delta}$.
\end{thm}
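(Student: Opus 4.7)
The plan is to apply Corollary~\ref{cor:fixed-point} to the polynomial map $T : \mathcal{H} \to \mathcal{H}$ given by $T(u) = u - AF(u)$, with $F$ as in~\eqref{eqn:rad-heat-F} and $A$ the approximate inverse of $DF(\bar u)$ constructed in Section~\ref{sec:fixedpoint}. Since $u \mapsto -\varepsilon u^p$ is polynomial of degree $p \in \{2,3\}$ in $u$, $T$ is polynomial of the same degree, so Corollary~\ref{cor:fixed-point} applies provided we can produce rigorous numerical values for $Y$ and $Z_1, \ldots, Z_p$ satisfying the assumptions. The existence statement then reduces to exhibiting a positive root of the radii polynomial $P(\delta) = Y - \delta + \sum_{k=1}^{p} \frac{Z_k}{k!}\delta^k$, and to identifying the smallest such root $\underline{\delta}$ as the announced bound, with uniqueness holding up to the (larger) root $\bar\delta$ of $Q$.

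The concrete work is to compute $\bar u \in \mathcal{H}_n$ (for $n = 500$) by running Newton's method on the Galerkin problem~\eqref{eqn:galerkin}, and then to evaluate $Y$ and each $Z_k$ rigorously through interval arithmetic. For $Y$, this means computing $\|\mathcal{L}A_n(P_n F(\bar u))\|_{L^2}^2$ and $\|\bar u^p\|_{L^2}^2 - \|P_n(\bar u^p)\|_{L^2}^2$ via Lemma~\ref{lem:quad}, which reduces every integral of a product of $L^2(\mu)$-projections of $\bar u$ or $\bar u^{p-k}\hat\psi_m$ to a Gauss--Laguerre sum using the regularised Vandermonde matrix $\bar V$ of Section~\ref{sec:quadrature}. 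The bound $Z_1$ is assembled as the $2$-norm of the $2\times 2$ matrix $(Z^{ij})$; the entries $Z^{11}$, $Z^{12}$, $Z^{21}$, $Z^{22}$ are explicit $(n+1)$-dimensional operator norms or weighted norms that involve $A$, $\bar u^{p-1}$, $\lambda_{n+1}^{-1}$, and $\|\bar u\|_\infty$, the last of which is upper-bounded via the Szeg\H{o} estimate~\eqref{eqn:tail-estimate}. The higher-order bounds $Z_2, Z_3$ are taken to be the minimum between~\eqref{eq:Zk1} and~\eqref{eq:Zk2}, both of which are fully explicit given the embedding constants of Corollary~\ref{cor:nonlin} and Lemma~\ref{lem:infty-bound}.

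With the values listed in Table~\ref{tab:heat-rad-sol} in hand, one checks that $P(\delta)$ is negative on $[\underline\delta, \bar\delta)$ (equivalently, that $Y + \int_0^{\delta}\mathcal{Z}(s)\d s < \delta$ with $\mathcal{Z}(s) = \sum_{k=1}^p \frac{Z_k}{(k-1)!}s^{k-1}$ and $\mathcal{Z}(\delta) < 1$), so that Corollary~\ref{cor:fixed-point} yields a unique fixed point $u^\star \in \bar B(\bar u, \delta) \subset \mathcal{H}$ for any $\delta$ in this interval; since fixed points of $T$ coincide with zeros of $F$ once $A$ is injective, and the bound $Z^{11} < 1$ forces the finite block $A_n$ (and hence $A$, which is the identity on $\iH$) to be invertible, $u^\star$ solves $F(u^\star) = 0$ and thus Eq.~\eqref{eqn:rad-form-plus}. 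The main obstacle is not conceptual but numerical: one must carry the quadrature and all matrix computations with enough precision that the interval enclosures of $Y$ come out on the order of $10^{-13}$ to $10^{-21}$ while those of the $Z_k$ stay sharp enough to keep the contraction inequality valid; this is handled by precomputing the Gauss--Laguerre nodes in \texttt{BigFloat} arithmetic (with rigorous enclosures obtained via the Fundamental Theorem of Algebra and the Intermediate Value Theorem) and then using interval arithmetic from \texttt{IntervalArithmetic.jl} on the regularised Vandermonde evaluation.
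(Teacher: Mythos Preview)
Your proposal is correct and follows essentially the same route as the paper: construct $T=u-AF(u)$ with $F$ from~\eqref{eqn:rad-heat-F} and $A$ as in Section~\ref{sec:fixedpoint}, rigorously evaluate the bounds $Y$, $Z_1,\ldots,Z_p$ from Section~\ref{sec:bounds} via the quadrature of Section~\ref{sec:quadrature} and interval arithmetic, and apply Corollary~\ref{cor:fixed-point}. Your write-up is simply a more detailed unpacking of what the paper compresses into a reference to the code and to Section~\ref{sec:bounds}; the only minor deviation is that you justify the invertibility of $A$ via $Z^{11}<1$, whereas the paper invokes the contraction condition~\eqref{eqn:condcontract} (i.e.\ $Z_1<1$) directly---both arguments are valid.
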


\begin{proof}
    We consider $F :\mathcal{H}\to\mathcal{H}$ defined in~\eqref{eqn:rad-heat-F}. In the file \texttt{Heat/d2\_p3/proof.ipynb} (resp. \newline \texttt{Heat/d3\_p2/proof.ipynb}) at \cite{Chu2024CodeOn}, we construct $A_n$ (and thus indirectly $A$ and $T$) as outlined in Section~\ref{sec:fixedpoint}. Following the estimates of Section~\ref{sec:bounds}, we obtain the upper bounds in Table~\ref{tab:heat-rad-sol} for $Y$ and the $Z_k$'s to conclude with Corollary~\ref{cor:fixed-point}.
\end{proof}

\subsection{Proving positivity a posteriori}\label{sec:positivity}

Similarly to the previous literature on the semilinear heat equation, we also wish to verify the (strict) positivity of our radial solutions in this work. Note that even if $\bar{u}$ were proven to be strictly positive, for any $\delta>0$, $B(\bar{u},\delta)$ necessarily contains functions that are eventually negative at $+\infty$, e.g.~$\bar{u} - (\delta/2)\hat{\psi}_m/\|\hat{\psi}_m\|_{H^2(\mu)}$ for large enough $m$. Thus, the positivity of $u^{\star}$ cannot be deduced directly. Since $\bar{u}$ decays faster than our $L^{\infty}$-control, a direct comparison of $\bar{u}$ and $u^{\star}$ is only sufficient in a neighbourhood of zero. We thus resort to a maximum principle argument to show the (strict) positivity of $u^{\star}$ away from zero with the lemma below.

\begin{lem}\label{lem:max-princ}
    Let $b:\R^d\to\R^d$, $c:\R^d\to\R$ two $\mathcal{C}^0$ functions and $\varphi:\R^d\to\R$ a $\mathcal{C}^2$ function such that
    \begin{align}\label{eqn:phi-eq}
        -\Delta \varphi + b\cdot\nabla\varphi + c\varphi = 0.
    \end{align}
    Assume there exists $r_0>0$ such that $c(x)>0$ for all $\vert x\vert \geq r_0$. Assume further that $\varphi(x)>0$ for all $\vert x\vert \leq r_0$, and that $\varphi(x)\underset{\vert x\vert \to \infty}{\longrightarrow}0$. Then, $\varphi(x)>0$ for all $x\in\R^d$.
\end{lem}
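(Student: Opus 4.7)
The plan is a two-step maximum principle argument: I would first show that $m:=\inf_{\R^d}\varphi$ is nonnegative, and then upgrade this to strict positivity. The crucial structural input is that $c>0$ exactly where $\varphi$ might possibly be nonpositive (namely outside $\overline{B(0,r_0)}$), so that any ``bad'' interior minimum would produce an unfavourable sign configuration in~\eqref{eqn:phi-eq}.

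For the first step, I would begin by noting that the hypotheses $\varphi>0$ on the compact set $\overline{B(0,r_0)}$ and $\varphi(x)\to 0$ at infinity force $m$ to be attained at some $x_0\in\R^d$: indeed if $m<0$, the sublevel set $\{\varphi\leq m/2\}$ is closed, bounded (since $\varphi$ is close to $0$ for large $\vert x\vert$), and nonempty, hence compact. Suppose for contradiction that $m<0$. Then $\varphi(x_0)<0$ implies $\vert x_0\vert > r_0$ (from the positivity hypothesis on $\overline{B(0,r_0)}$), so $c(x_0)>0$. At the interior minimum $x_0$ we have $\nabla\varphi(x_0)=0$ and $\Delta\varphi(x_0)\geq 0$, and plugging into~\eqref{eqn:phi-eq} yields $\Delta\varphi(x_0)=c(x_0)m<0$, contradicting $\Delta\varphi(x_0)\geq 0$.

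Knowing $m\geq 0$, I would then suppose for contradiction that $\varphi(x_1)=0$ for some $x_1\in\R^d$; as before the positivity hypothesis forces $\vert x_1\vert > r_0$. Setting $\Omega:=\{\vert x\vert>r_0\}$, the function $\varphi$ is $\mathcal{C}^2$ on $\Omega$, satisfies the linear elliptic equation with $c\geq 0$ there, and attains its global minimum value $0$ at the interior point $x_1\in\Omega$. The strong maximum principle then forces $\varphi\equiv 0$ on the connected component of $\Omega$ containing $x_1$. For $d\geq 2$, $\Omega$ is connected, so by continuity $\varphi\equiv 0$ on $\partial\Omega=\{\vert x\vert=r_0\}$, contradicting $\varphi>0$ on $\overline{B(0,r_0)}$; for $d=1$, the same argument on the appropriate half-line forces $\varphi(\pm r_0)=0$, again a contradiction.

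The routine part is the first step: a direct interior-minimum analysis once the infimum is secured. The main obstacle is the second step, because a naive first-order analysis at $x_1$ only yields $\Delta\varphi(x_1)=0$, which is not a contradiction by itself. One really has to invoke the strong maximum principle to propagate the vanishing of $\varphi$ over an entire connected set, and then use connectedness of $\Omega$ together with the continuity of $\varphi$ on the sphere $\{\vert x\vert=r_0\}$ to reach the contradiction with the boundary-type hypothesis.
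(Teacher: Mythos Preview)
Your argument is correct and follows essentially the same strategy as the paper: a nonpositive minimum would have to occur in the exterior region $\{\vert x\vert>r_0\}$ where $c>0$, and the strong maximum principle then propagates the minimum value out to the sphere $\{\vert x\vert=r_0\}$, contradicting the positivity assumption there. The only cosmetic difference is that the paper treats the cases $m<0$ and $m=0$ in a single step via the strong maximum principle, whereas you first dispose of $m<0$ by an elementary pointwise second-derivative test before invoking the strong maximum principle for $m=0$; your Step~1 is therefore slightly redundant (Step~2 alone already covers both cases), but not incorrect.
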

This statement is essentially due to the maximum principle, which can be used even though we are on an unbounded domain because we already control the behaviour of the solution at infinity.
\begin{proof}
    We argue by contradiction and assume that $\varphi$ is nonpositive somewhere. Then, since $\varphi(x)\underset{\vert x\vert \to \infty}{\longrightarrow}0$, there exists $x_0\in\R^d$ such that $\varphi(x_0) = \min_{\R^d} \varphi \leq 0$, and we must have $\vert x_0 \vert > r_0$. Let $\mathcal{U}$ be a connected open subset of $\R^d$ containing $x_0$, such that $\vert x\vert \geq r_0$ for all $x\in \mathcal{U}$, and such that there exists $x$ in $\partial \mathcal{U}$ with $\vert x\vert = r_0$. Since $\varphi$ reaches its (nonpositive) minimum over $\bar{\mathcal{U}}$ at the interior point $x_0$, while $c$ is positive in $\mathcal{U}$, the strong maximum principle yields that $\varphi$ must be constant on $\bar{\mathcal{U}}$. In particular, $\varphi$ is nonpositive on $\partial \mathcal{U}$, which contradicts the fact that the $\varphi(x)$ is positive for all $\vert x \vert \leq r_0$.
\end{proof}

Note that with $\varepsilon=-1$ in~\eqref{eqn:rad-form}, we are in the ``bad-sign'' case of the semilinear heat equation, so that the strong maximum principle and the above lemma cannot be applied directly to equation~\eqref{eqn:carteq}. However, $\varphi = e^{|x|^2/8}u^{\star}$ satisfies an equation of the form~\eqref{eqn:phi-eq}, therefore its (strict) positivity can be verified via Lemma~\ref{lem:max-princ}, which in turn is equivalent to the (strict) positivity of $u^{\star}$. For clarity, let us denote below a radial solution $u^{\star}$ as a function of $r = |x|$.

\begin{cor}\label{cor:positivity}
    Let $u^{\star} \in H^1(\mu)$ be a solution of Eq.~\eqref{eqn:rad-form} and $r_0>0$ be such that
    \begin{equation}\label{eqn:c-pos}
    \frac{d}{4}+\frac{r^2}{16}-\frac{1}{p-1}+\varepsilon u^{\star}(r)^{p-1}>0\qquad \mbox{for all $r\geq r_0$}.
    \end{equation}
    If in addition, $u^{\star}(r)>0$ for all $r\leq r_0$, then $u^{\star}(r)>0$ for all $r\in \mathbb{R}_+$.
\end{cor}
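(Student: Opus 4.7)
The plan is to apply Lemma~\ref{lem:max-princ} to the transformed function $\varphi = e^{|x|^2/8} u^{\star}$. Using the identity~\eqref{eqn:unknown-change} together with the fact that $u^{\star}$ solves $\cL u^{\star} = u^{\star}/(p-1) - \varepsilon (u^{\star})^p$, a direct computation yields the linear equation
\begin{equation*}
-\Delta \varphi + \left(\frac{|x|^2}{16} + \frac{d}{4} - \frac{1}{p-1} + \varepsilon\, u^{\star}(|x|)^{p-1}\right)\varphi = 0,
\end{equation*}
which is of the form~\eqref{eqn:phi-eq} with $b \equiv 0$ and with $c(x)$ being exactly the expression appearing in hypothesis~\eqref{eqn:c-pos}. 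Therefore the sign condition on $c$ for $|x| \geq r_0$ is built into the assumption of the corollary.

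It remains to verify the other two hypotheses of Lemma~\ref{lem:max-princ}. Positivity of $\varphi$ on $\{|x| \leq r_0\}$ is immediate from $u^{\star}(r) > 0$ for $r \leq r_0$, since $e^{|x|^2/8} > 0$. The decay $\varphi(x) \to 0$ as $|x| \to \infty$ is the point that requires some care. The idea is to upgrade the regularity of $u^{\star}$ to $H^2(\mu)$ via elliptic regularity (which is legitimate since the right-hand side $u^{\star}/(p-1) - \varepsilon (u^{\star})^p$ is in $L^2(\mu)$ thanks to the Sobolev embeddings of Section~\ref{sec:embeddings}), and then to invoke Lemma~\ref{lem:phiH2}, which gives $\varphi \in H^2(\R^d)$. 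For the dimensions of interest ($d \in \{1,2,3\}$), Sobolev embedding then yields $\varphi \in C^0_0(\R^d)$, i.e.~$\varphi$ is continuous and vanishes at infinity.

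With all three hypotheses in hand, Lemma~\ref{lem:max-princ} gives $\varphi(x) > 0$ for all $x \in \R^d$, and since $e^{|x|^2/8} > 0$, this is equivalent to $u^{\star}(r) > 0$ for all $r \in \R_+$. The main obstacle is really the regularity bootstrap needed to justify applying a strong maximum principle: the coefficient $c$ must be continuous, which follows once $u^{\star}$ is known to be continuous (itself a consequence of the $H^2(\mu) \hookrightarrow L^\infty$-type embeddings of Corollary~\ref{cor:H2-emb}(i) applied after bootstrapping). Everything else is a straightforward verification of hypotheses against the conclusion of Lemma~\ref{lem:max-princ}.
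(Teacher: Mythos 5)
Your transformation to $\varphi = e^{|x|^2/8}u^{\star}$, the identification of $c$ with the expression in~\eqref{eqn:c-pos}, and the immediate verification of positivity on $\{|x|\le r_0\}$ all match the paper's argument. You take a genuinely different route to the remaining hypotheses of Lemma~\ref{lem:max-princ}, though. The paper cites~\cite[Theorem~3.12, Proposition~3.15]{Escobedo1987VariationalEquation} to obtain $u^{\star}\in\mathcal{C}^2(\R^d)$ outright, and then deduces $\varphi(r)\to 0$ from a one-dimensional argument: the case $d=1$ of Lemma~\ref{lem:CartEmb} yields $\varphi\in H^1(\R_+)$, and~\cite[Corollary~VIII.8]{Brezis2011FunctionalEquations} then gives the decay. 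You instead bootstrap $u^{\star}$ from $H^1(\mu)$ to $H^2(\mu)$ via $\mathcal{L}^{-1}$ and use Lemma~\ref{lem:phiH2} with $H^2(\R^d)\hookrightarrow C^0_0(\R^d)$ ($d\le 3$). That is a self-contained alternative to the Escobedo--Kavian reference for the decay step, and it is correct as far as it goes.

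However, there is a genuine gap. Lemma~\ref{lem:max-princ} requires $\varphi\in\mathcal{C}^2$, not merely $\varphi\in H^2(\R^d)\cap C^0_0$. Your proof carefully addresses continuity of the coefficient $c$, but never establishes second-order classical differentiability of $\varphi$, which is an explicit hypothesis because the lemma invokes the classical strong maximum principle. The paper closes this precisely by citing the $\mathcal{C}^2$ regularity of $u^{\star}$ from~\cite{Escobedo1987VariationalEquation}. Your bootstrap stops at $H^2(\mu)$, which gives continuity (even local Hölder continuity by Morrey, for $d\le 3$) but not $\mathcal{C}^2$. The gap is fillable: once you know $\varphi\in H^2(\R^d)\hookrightarrow C^{0,\alpha}_{\mathrm{loc}}$, the coefficient $c$ is locally Hölder (at least for $p\ge 2$), so Schauder interior estimates applied to $-\Delta\varphi=-c\varphi$ yield $\varphi\in C^{2,\alpha}_{\mathrm{loc}}$. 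You should add this step, or cite the regularity result the paper uses, before applying Lemma~\ref{lem:max-princ}.
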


\begin{proof}
    Define
    \begin{align*}
        \varphi :\R_+&\longrightarrow \R\\
        r&\longmapsto e^{r^2/8}u^{\star}(r).
    \end{align*}
    If $u^{\star}\in H^1(\mu)$ is a solution to Eq.~\eqref{eqn:rad-form}, then by~\cite[Theorem 3.12, Proposition 3.15]{Escobedo1987VariationalEquation} it is also a classical solution in the sense that $u^{\star}\in \mathcal{C}^2(\R^d)$. Since $u^{\star}(|\cdot|)\in \mathcal{C}^2(\R^d)$, we observe that $u^{\star} \in H^1(\R_+,e^{r^2/4})$ and thus from the proof of Lemma~\ref{lem:CartEmb} in the case $d=1$, we have that $\varphi\in H^1(\R_{+})$. Therefore, from~\cite[Corollary VIII.8]{Brezis2011FunctionalEquations}, $\lim_{r\to +\infty}\varphi(r) = 0$. Now, letting $b=0$ and
    $$c(x) = \frac{d}{4}+\frac{|x|^2}{16}-\frac{1}{p-1}+\varepsilon u^{\star}(|x|)^{p-1},$$
    we have that $\varphi(|\cdot|)$ solves the equation
    $$-\Delta \varphi + c \varphi = 0.$$
    Since $c, \varphi(|\cdot|) \in \mathcal{C}^2(\R^d)$ fulfil the assumptions of Lemma~\ref{lem:max-princ}, under condition~\eqref{eqn:c-pos}, we obtain that $\varphi(|x|)>0$ for all $x\in\R^d$ and thus $u^{\star}(r)>0$ for all $r \in \mathbb{R}_+$.
\end{proof}

In the case $d\in\{1, 2, 3\}$, it is straightforward to find some $r_0$ such that condition~\eqref{eqn:c-pos} holds by using the $L^{\infty}$-bound obtained from Lemma~\ref{lem:infty-bound} and using the estimate~\eqref{eqn:tail-estimate}. Similarly, checking $u^{\star}$ is positive on $[0, r_0]$ can be achieved efficiently by using Lemma~\ref{lem:infty-bound} and by using interval arithmetic to evaluate $P_m \bar{u}$ on $[0, r_0]$ for some small $m$ and using the estimate~\eqref{eqn:tail-estimate} to bound $|(I-P_m)\bar{u}(r)|$. Let us remark that alternatively (in particular for $d\geq 4$), $L^{\infty}$-estimates on $u^{\star}$ could be obtained by bootstrap arguments along the line of the proof of~\cite[Theorem 3.12]{Escobedo1987VariationalEquation}. By iteratively applying such arguments, one may also be able to obtain $\mathcal{C}^k$-bounds on $u^{\star}-\bar{u}$.

\begin{cor}
    Let $u^{\star}\in H^2(\mu)$ be as given by Theorem~\ref{thm:rad-int-sol}, $u^{\star}$ is strictly positive on $\R^d$.
\end{cor}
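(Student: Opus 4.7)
The plan is to invoke Corollary~\ref{cor:positivity}, which reduces the problem to two tasks: (a) exhibit an explicit $r_0 > 0$ such that $\frac{d}{4} + \frac{r^2}{16} - \frac{1}{p-1} - u^{\star}(r)^{p-1} > 0$ for all $r \geq r_0$ (recall we are in the case $\varepsilon = -1$), and (b) verify that $u^{\star}(r) > 0$ for all $r \in [0, r_0]$. Both tasks rely on converting the $H^2(\mu)$-enclosure provided by Theorem~\ref{thm:rad-int-sol} into pointwise information on $u^{\star}$.

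For task (a), the key observation is that $u^{\star}$ decays like $e^{-r^2/8}$. Indeed, Lemma~\ref{lem:infty-bound} applied to $u^{\star} - \bar{u}$ yields $|u^{\star}(r) - \bar{u}(r)| \leq C(d)\,\underline{\delta}\, e^{-r^2/8}$, while the tail estimate~\eqref{eqn:tail-estimate} applied term-by-term to the finite expansion $\bar{u} = \sum_{m=0}^n \bar{u}_m \hat{\psi}_m$ gives $|\bar{u}(r)| \leq e^{-r^2/8} \sum_{m=0}^n |\bar{u}_m|\, \hat{\psi}_m(0)$. Combining these produces an explicit constant $K > 0$ such that $|u^{\star}(r)| \leq K e^{-r^2/8}$, whence $u^{\star}(r)^{p-1} \leq K^{p-1} e^{-(p-1)r^2/8}$. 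Since the quadratic growth of $\frac{r^2}{16}$ dominates the exponential decay, condition~\eqref{eqn:c-pos} is satisfied as soon as $r$ is large enough, and an admissible $r_0$ can be extracted rigorously by a short interval-arithmetic computation.

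For task (b), I would partition $[0, r_0]$ into a finite collection of small subintervals and, on each such subinterval $I$, compute a rigorous enclosure of $\bar{u}(I)$ by combining an interval-arithmetic evaluation of the truncated partial sum $P_m \bar{u}$ (for some modest $m \leq n$) with the tail bound $|(I - P_m)\bar{u}(r)| \leq e^{-r^2/8} \sum_{j=m+1}^n |\bar{u}_j|\,\hat{\psi}_j(0)$ given by~\eqref{eqn:tail-estimate}. Subtracting $C(d)\,\underline{\delta}\, e^{-r^2/8}$ then yields a rigorous lower bound for $u^{\star}$ on $I$; if this lower bound is positive on every subinterval, we are done, otherwise the partition is refined. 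Because $\bar{u}$ is visibly bounded away from $0$ on $[0, r_0]$ in Figures~\ref{fig:heat-rad-sol-2} and~\ref{fig:heat-rad-sol-3}, and $\underline{\delta}$ is extremely small (Table~\ref{tab:heat-rad-sol}), this check will succeed without requiring a very fine partition.

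The only real potential difficulty lies in balancing the two requirements: $r_0$ must be large enough for (a) to hold but small enough that the numerical verification (b) remains tractable. In practice, since $\bar{u}$ decays rapidly and is strictly positive on a wide interval around the origin (and since $C(d)\,\underline{\delta}$ is tiny), there is a large window of admissible choices of $r_0$, so this is not a genuine obstacle. Once both conditions are verified, Corollary~\ref{cor:positivity} directly yields the claimed strict positivity of $u^{\star}$ on $\R^d$.
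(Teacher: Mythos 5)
Your proposal is correct and matches the paper's approach exactly: the paper also reduces to Corollary~\ref{cor:positivity}, establishes condition~\eqref{eqn:c-pos} for large $r$ via the $L^\infty$-embedding of Lemma~\ref{lem:infty-bound} combined with the Szeg\H{o} tail estimate~\eqref{eqn:tail-estimate}, and then verifies positivity on $[0,r_0]$ by interval-arithmetic evaluation of a truncated partial sum $P_m\bar{u}$ together with the same tail estimate for the remainder, using the $L^\infty$-control of $u^\star-\bar{u}$ supplied by Theorem~\ref{thm:rad-int-sol}.
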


\begin{rmk}
    In the event that the positivity of $u^{\star}$ is verified, in the case of $\varepsilon = -1$, since $u^{\star}\in C^2(\R_+)$, we immediately obtain that it is a classical solution (in the sense of~\cite{Peletier1986On0}) of
    \begin{equation}
    \partial_r^2 u +\left(\frac{d-1}{r}+\frac{r}{2}\right)\partial_r u + \frac{u}{p-1} + |u|^{p-1}u = 0,
\end{equation}
    and since $u^{\star}$ is rapidly decaying, the results of~\cite{Peletier1986On0} can be applied and yield $u^{\star}(r) = \mathcal{O}(r^{2/(p-1)-d}e^{-r^2/4})$. Furthermore,~\cite[Theorem 1]{Yanagida1996UniquenessEquation} and~\cite{Naito2000RadialEquations} ensure that $u^{\star}$ is the unique rapidly decaying and strictly positive solution of Eq.~\eqref{eqn:rad-form} and of Eq.~\eqref{eqn:carteq} respectively. The analogous result for the case $\varepsilon = +1$ is given by~\cite{Brezis1986AAbsorption}.
\end{rmk}

\subsection{An example with a fractional exponent}
\label{sec:heat-frac}

In the case $\varepsilon = + 1$, rapidly decaying self-similar solutions only exist for $p<1+2/d$~\cite{Brezis1986AAbsorption, Escobedo1987VariationalEquation}, and therefore integer exponents are excluded as soon as $d>1$. We now explain how our method can be generalised to handle cases when the exponent $p$ is a rational number. We focus on enclosing the positive self-similar profile of the nonlinear heat equation with $\varepsilon = + 1$, $d= 2$ and $p = 5/3$. We thus aim to solve
\begin{equation}
    \mathcal{L}u -\frac{3}{2}u + u^{5/3} = 0,\label{eqn:frac-eq}
\end{equation}
where we choose the real branch $x \in \R\mapsto x^{5/3} := (\sqrt[3]{x})^2x$.
We first look for an approximate solution $\bar{u} = \bar{U}^3$ such that
$$\bar{U}(r) = q(r) e^{-r^2/12},$$
where $q$ is an even polynomial. The fractional exponent can then be \emph{rigorously} handled via a quadrature as in Lemma~\ref{lem:quad}. For instance, here, we first start by finding an approximate solution $\tilde{u}$ via a pseudospectral method~\cite{Funaro1991ApproximationFunctions}, and compute for $n\in 3\N$
$$q(r) = \sum_{m = 0}^{n/3}a_m L_m(r^2/12) \qquad \text{where } a_m \approx \frac{1}{6}\int_0^{\infty}L_m(r^2/12)\sqrt[3]{\tilde{u}(r)}r\d r,$$
so that $\bar{U} \approx \sqrt[3]{\tilde{u}}$. Finally, we define $\bar{u} := \bar{U}^3$ to apply Theorem~\ref{thm:fixed-point}. From here, $\bar{u}$ can be identified as a vector of $n+1$ intervals as in~\eqref{eqn:ubar-id} and computations can be carried out in the same manner as in the other examples. Note that here, we cannot expect $\bar{u}$ to achieve the minimal error $\min_{u\in P_n(\mathcal{H})}\|F(u)\|_{H^2(\mu)}$. Hence, we need to take more basis functions to achieve a comparable error as $\tilde{u}$, actually around $s$ times more where the degree of the nonlinearity is $p = q/s$.

\begin{rmk}
    It is clear that this approach can work with any rational exponent with an odd denominator. In the case of an exponent with an even denominator, this approach can only work if $\bar{U}$ can be verified to be nonnegative. Beyond the fact that this verification most likely requires a lengthy computation, finding such nonnegative $\bar{U}$ in the first place probably needs some fine-tuning. A more sophisticated approach would be to bypass this issue by reformulating the problem as a differential-algebraic equation as in~\cite{Breden2022Computer-assistedProblems}, but this would probably require additional bounds with an adapted functional analysis.
\end{rmk}
Now, analysing the function $x\mapsto x^{2/3}:= (\sqrt[3]{x})^2$, one finds that for all $x, y \in \R$,
\begin{equation*}\label{eqn:5-3-bound}
    \left|\diff{y^{5/3}}{y}-\diff{x^{5/3}}{x}\right|= \frac{5}{3}\left|y^{2/3}-x^{2/3}\right|\leq \frac{5}{3}|x-y|^{2/3}.
\end{equation*}
Thus, writing $\|DT(u)\|_{H^2, H^2} \leq \|DT(\bar{u})\|_{H^2, H^2} +\|DT(u) - DT(\bar{u})\|_{H^2, H^2}$ and estimating that for $h\in H^2(\mu)$
\begin{align*}
    \|(DT(u) - DT(\bar{u})) h\|_{H^2} &\leq \frac{5}{3}\|A\mathcal{L}^{-1}(u^{2/3}-\bar{u}^{2/3})h\|_{H^2}\\
    &\leq \frac{5}{3}\|\mathcal{L}A\mathcal{L}^{-1}\|_{L^2,L^2}\|(u^{2/3}-\bar{u}^{2/3})h\|_{L^2}\\
    &\leq \frac{5}{3}\|\mathcal{L}A\mathcal{L}^{-1}\|_{L^2,L^2}\|(u^{2/3}-\bar{u}^{2/3})\|_{\infty}\|h\|_{L^2}\\
    &\leq \frac{5}{3}\|\mathcal{L}A\mathcal{L}^{-1}\|_{L^2,L^2}\|(u-\bar{u})\|^{2/3}_{\infty}\|h\|_{L^2}\qquad \mbox{by~\eqref{eqn:5-3-bound}}\\
    &\leq \frac{5}{3}C(2)^{2/3}\|\mathcal{L}A\mathcal{L}^{-1}\|_{L^2,L^2}\|u-\bar{u}\|^{2/3}_{H^2}\|h\|_{H^2}\qquad\mbox{by Lemma~\ref{lem:infty-bound},}
\end{align*}
we may choose
$$\mathcal{Z}(\delta) := Z_1 + Z_{5/3}\delta^{2/3}$$
satisfying~\eqref{eqn:Zcond}, where $Z_1$ is as in Section~\ref{sec:bounds} and
$$Z_{5/3} = \frac{5}{3}C(2)^{2/3}\|\mathcal{L}A\mathcal{L}^{-1}\|_{L^2,L^2}.$$
For this example, we choose $n = 1200$ and find $\bar{u}$ depicted in Figure~\ref{fig:frac-sol}.

\begin{proof}[Proof of Theorem~\ref{thm:intro_frac}]
    We consider $F :\mathcal{H}\to\mathcal{H}$ defined in~\eqref{eqn:rad-heat-F}. In the file \texttt{Heat/fractional/proof.ipynb} at \cite{Chu2024CodeOn}, we load a precomputed $\bar{U}$ and compute $\bar{u}=\bar{U}^3$, then we construct $A_n$ (and thus indirectly $A$ and $T$) as outlined in Section~\ref{sec:fixedpoint}, and we find the following sufficient bounds:
    \begin{align*}
        Y &= 1.5917976375189734\times 10^{-23},\\
        Z_1 &= 0.002929002447260958,\\
        Z_{5/3} &=19.560776011689786,\\
        \underline{\delta} &=1.5964737129311592\times 10^{-23}\\
        \bar{\delta} &= 0.011508269024627918.
    \end{align*}
    We conclude with Theorem~\ref{thm:fixed-point}. Finally, positivity is deduced from Corollary~\ref{cor:positivity}.
\end{proof}

\subsection{An example of a non-radial solution}\label{sec:heat-asym}
Let us now turn to the problem of finding (real-valued) non-radial solutions to problem~\eqref{eqn:carteq}. {We limit ourselves to the case $\varepsilon = -1, d=2, p = 3$ here (though, such construction can be generalised), so that the problem remains polynomial while allowing for solutions that are not necessarily positive.} In polar coordinates, the equation for $u(r, \vartheta) = e^{-r^2/8}\varphi(r\cos\vartheta, r\sin\vartheta)$ reads
\begin{equation}
    \mathcal{L}u = \frac{1}{2} u + u^3,\label{eqn:asymmetric-heat}
\end{equation}
where
$$\mathcal{L}u= -\frac{r}{2}\pdiff{u}{r} - \Delta = -\frac{r}{2}\pdiff{u}{r} -\frac{1}{r}\pdiff{}{r}\left(r\pdiff{u}{r}\right) - \frac{1}{r^2}\pdiff{^2u}{\vartheta^2}.$$
Observe that since the equation itself is radially symmetric, if $u : \R_+ \times \mathbb{T} \to\R$ is a solution to Eq.~\eqref{eqn:asymmetric-heat}, then so is $u(\,\cdot\,,\, \cdot +\vartheta_0)$ for any phase $\vartheta_0\in \mathbb{T}$. Thus, our approach which relies on a contraction principle, and hence requires local uniqueness, cannot possibly work directly on $H^2(\mu)$. By treating the problem in polar coordinates, instead of using cartesian coordinates with the Hermite basis~\eqref{eqn:hermite-basis}, we get a natural way to handle the translation invariance in the angular variable.

We thus restrict the problem to an appropriate subspace, namely
$$\mathcal{H} = \overline{\mathrm{Span}\left\{\psi_{k, m}\,\mid\, k\in \N, m \in \N\right\}}^{H^2} \subset H^2(\mu),$$
where $\psi_{k, m}$ are as in Definition~\ref{def:spherical-basis}, and, as before, we denote by $\hat{\psi}_{k,m}$ the normalisation of $\psi_{k,m}$ with respect to the $L^2(\mu)$-inner product $\langle\cdot, \cdot\rangle$. Since we restricted ourselves to nonnegative $k$'s, $\mathcal{H}$ is nothing but the space of $H^2(\mu)$-functions which are even with respect to $\vartheta$. 
Solving for Eq.~\eqref{eqn:asymmetric-heat} in this space addresses the above symmetry ``issue'' and it is straightforward to check that
\begin{align}\label{eqn:F-heat-asym}
    F : \mathcal{H} &\longrightarrow \mathcal{H} \nonumber\\
    u &\longmapsto u - \mathcal{L}^{-1}\left(\frac{u}{2}+u^3\right)
\end{align}
is well-defined and compact. In this respect, our strategy goes along the same line as the one employed in~\cite{Arioli2019Non-radialDisk} to find non-radial solutions of analogous semilinear equations on the disc. In the most general setting, the above-defined space $\mathcal{H}$ shall be used, but one can simplify the proof further if the solution has a zero projection onto certain frequencies. In our example, we will perform our computer-assisted proof in the subspace

$$\tilde{\mathcal{H}} = \overline{\mathrm{Span}\left\{\psi_{2k+1, m}\,\mid\, k\in \N, m \in \N\right\}}^{H^2} \subset \mathcal{H}\subset H^2(\mu).$$

Working on such subspaces can also automatically discard other non-zero solutions which are not of interest such as radial ones. Here, we have that
$$\mathcal{L}\psi_{2k+1,m} = \lambda_{k+m}\psi_{2k+1, m}, \qquad \lambda_{j} := \frac{3}{2}+j,$$
and the $\psi_{2k+1, m}$ for $k,m\in\mathbb{N}$ form a Hilbert basis of $\tilde{\mathcal{H}}$. Note that the definition of $\lambda_j$ in that context is slightly different from the one introduced in Notation~\ref{not:2}. In particular, this change reflects the fact that the first eigenvalue of $\mathcal{L}$ restricted to $\tilde{\mathcal{H}}$ is strictly larger than $d/2=1$, which slightly improves the Poincaré inequalities~\eqref{eqn:Poincineq} and~\eqref{eqn:Poincineq2} on $\tilde{\mathcal{H}}$.

The estimates for the radii polynomials coefficients are essentially the same as in Section~\ref{sec:bounds}, with $P_n$ now denoting the projection onto $\mathrm{Span}\left\{\psi_{2k+1, m}\,\mid\, k\in \N, m \in \N \text{ with } k+m\leq n\right\}$, and
$$w_{k,m} := \|\OP (\bar{u}^{2}\hat{\psi}_{2k+1,m})\|_{L^2(\mu)} =  \left(\|\bar{u}^{2} \hat{\psi}_{2k+1,m}\|^2_{L^2(\mu)}-\|P_n(\bar{u}^{2}\hat{\psi}_{2k+1,m})\|^2_{L^2(\mu)}\right)^{1/2}.$$
The computation of integrals described in Section~\ref{sec:quadrature} is adapted as follows:
\begin{itemize}
    \item the integral is first reformulated as a sum of integrals of product of functions, separable in $r$ and $\vartheta$ components so as to apply Fubini's Theorem,
    \item the integrals with respect to $\vartheta$ are treated via the multiplication formula for cosines and orthogonality,
    \item the integrals with respect to $r$ are of the form~\eqref{eqn:quad} after the change of variable $z = r^2/4$.
\end{itemize}
As for the $L^{\infty}$-bounds on $\hat{\psi}_{2k+1, m}$, we have that $\vert\hat{\psi}_{2k+1, m}(r,\vartheta)\vert \leq \vert\hat{\psi}_{2k+1, m}(r,0)\vert$, and we compute the local extrema of $r\mapsto \partial_r\hat{\psi}_{2k+1, m}(r,0)$ by enclosing its zeros (which must all be real), this time using \texttt{PolynomialRoots.jl}, the Julia implementation of~\cite{Skowron2012GeneralMicrolenses}.

\begin{rmk}
    In practice, while we might do the computer-assisted proof with many basis functions, we set some of the entries of $\bar{u}$ to zero to eliminate unnecessary computational costs so that the validation can be performed within a manageable amount of time on a personal computer.
\end{rmk}

We choose $n = 150$ (i.e.~$(n+1)(n+2)/2=11 476$ basis functions) and obtain the approximate solution $\bar{u}$ depicted in Figure~\ref{fig:asymmetric-heat}. 

\begin{figure}[h!]
    \centering
    \includegraphics[width=0.6\textwidth]{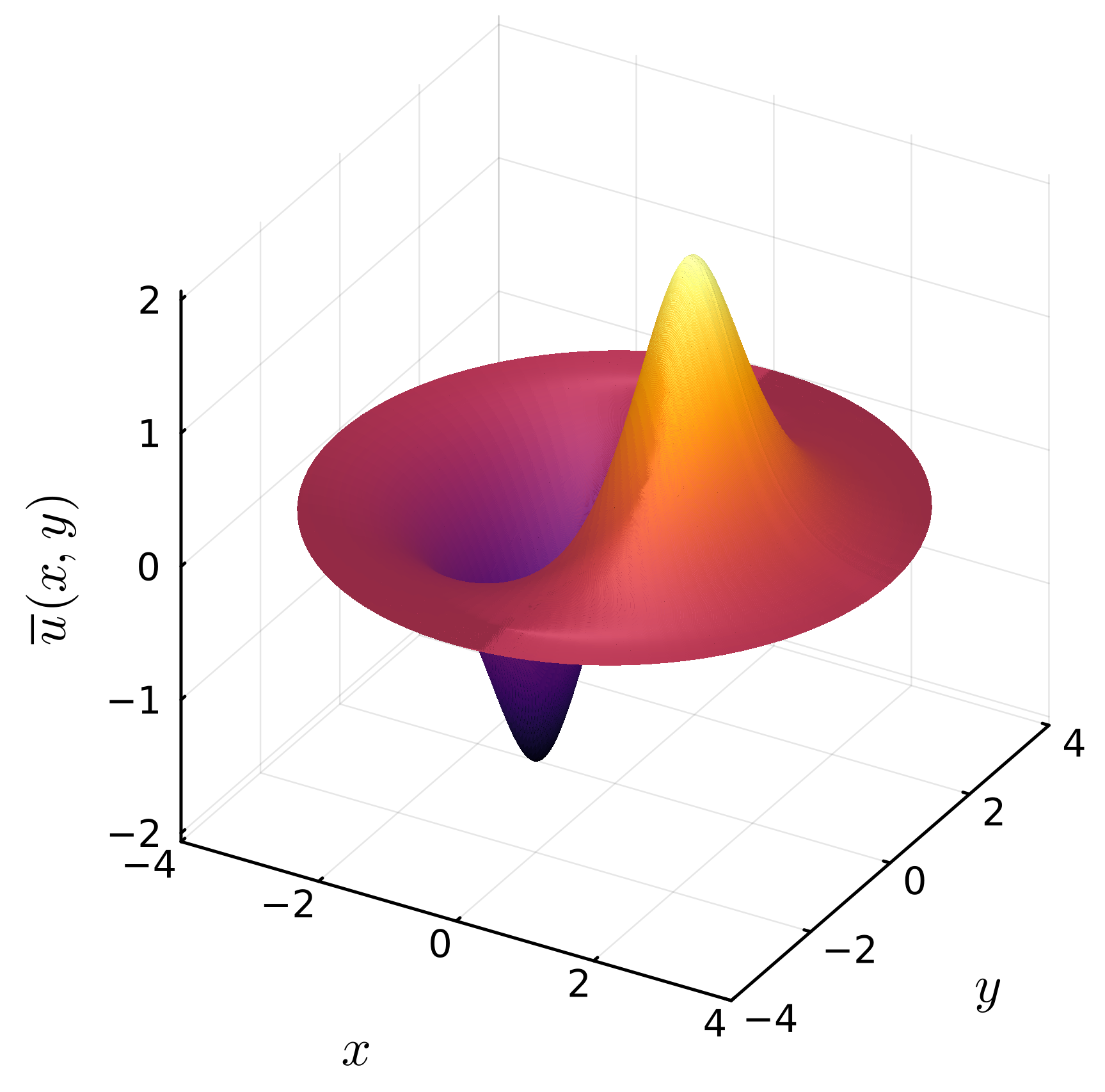}
    \caption{A non-radial numerical solution $\bar{u}$ to Eq.~\eqref{eqn:asymmetric-heat}.\label{fig:asymmetric-heat}}
\end{figure}

\begin{thm}
\label{thm:asym-heat}
    Let $\bar{u}:\mathbb{R}^2\to\mathbb{R}$ be the function whose restriction to the disk $\{(x, y)\mid x^2+y^2 \leq 4^2\}$ is represented on Fig~\ref{fig:asymmetric-heat} and whose description in terms of ``Fourier'' coefficients in $\{\hat{\psi}_{2k+1, m}\}_{k\geq 0, k+m\leq n}$ is available at~\cite{Chu2024CodeOn}. There exists a solution $u^{\star}\in \tilde{\mathcal{H}}$ to Eq.~\eqref{eqn:asymmetric-heat} such that

    $$\|u^{\star} - \bar{u}\|_{H^2(\mu)} \leq 1.6\times 10^{-3}.$$
\end{thm}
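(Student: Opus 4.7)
The plan is to apply Corollary~\ref{cor:fixed-point} to $T(u)=u-AF(u)$, with $F:\tilde{\mathcal{H}}\to\tilde{\mathcal{H}}$ the cubic map~\eqref{eqn:F-heat-asym} and $A$ the approximate inverse of $DF(\bar{u})$ constructed in Section~\ref{sec:fixedpoint}. First I would compute a numerical solution $\bar{u}\in\tilde{\mathcal{H}}_n$ by a spectral Newton iteration in the basis $\{\hat{\psi}_{2k+1,m}\}_{k+m\leq n}$ with $n=150$, and obtain $A_n\approx(P_nDF(\bar{u})P_n)^{-1}$ by numerical linear algebra. Since $F$ is cubic, it then suffices to produce rigorous bounds $Y,Z_1,Z_2,Z_3$ and to check that the radii polynomial
\[ P(\delta)=Y-\delta+Z_1\delta+\tfrac{Z_2}{2}\delta^2+\tfrac{Z_3}{6}\delta^3 \]
has a positive root at most $0.0015107162514175158$.

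The bounds $Y,Z_1,Z_2,Z_3$ are obtained by transcribing the radial template of Section~\ref{sec:bounds} with two adaptations. Every radial sum $\sum_{m=0}^{n}$ is replaced by the double sum over $(k,m)$ with $k+m\leq n$, and the vector $w_m$ appearing in $Z^{21}$ and $Z^{12}$ is replaced by its non-radial analogue $w_{k,m}$ defined in the excerpt. The relevant eigenvalues of $\mathcal{L}$ restricted to $\tilde{\mathcal{H}}$ are $\lambda_j=3/2+j$ rather than $d/2+j=1+j$, which strengthens the Poincaré inequality~\eqref{eqn:Poincineq2} used in $Z_2,Z_3$ and the projection estimate~\eqref{eq:projection_error_L2H2} used in $Z^{12},Z^{21},Z^{22}$. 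With $d=2$ and $p=3$, the bounds $Z_2,Z_3$ follow from~\eqref{eq:Zk2} combined with Lemma~\ref{lem:infty-bound}, modulo an explicit $L^\infty$-bound on $\bar{u}$, which I would obtain from the angular majoration $|\hat{\psi}_{2k+1,m}(r,\vartheta)|\leq|\hat{\psi}_{2k+1,m}(r,0)|$ and rigorous enclosure of the real zeros of $r\mapsto\partial_r\hat{\psi}_{2k+1,m}(r,0)$ via \texttt{PolynomialRoots.jl}, as indicated in the excerpt.

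All inner products and norms entering these bounds are evaluated rigorously by splitting each integral via Fubini. The angular integrals are computed exactly from product-to-sum identities for cosines, reducing to checks of frequency matching and orthogonality. The radial integrals, after the change of variables $z=r^2/4$, reduce to integrals of the form~\eqref{eqn:quad}; for these a Gauss--Laguerre quadrature of order $N$ with $2N-1$ at least the polynomial degree is exact, and interval arithmetic with high-precision enclosure of the nodes $z_i$ and weights $W_i$ turns each computation into a rigorous upper bound.

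The main obstacle is computational rather than conceptual: the Galerkin-type matrix $G_{(k,m),(k',m')}=\langle\hat{\psi}_{2k+1,m},\bar{u}^2\hat{\psi}_{2k'+1,m'}\rangle$ and the tail residuals $w_{k,m}$ live on a two-dimensional index set of size of order $10^4$ when $n=150$, so naive assembly would be prohibitive. I would mitigate this by setting numerically negligible coefficients of $\bar{u}$ to zero before validation, thereby sparsifying the nonlinear product, and by precomputing the regularised Vandermonde matrix $\bar{V}$ once in high precision and reusing it in every subsequent evaluation. Once the bounds $Y,Z_1,Z_2,Z_3$ are produced, a direct interval-arithmetic evaluation of $P(\delta)$ at $\delta\simeq 1.51\times 10^{-3}$ supplies a positive value, and Corollary~\ref{cor:fixed-point} then furnishes the fixed point $u^{\star}\in\tilde{\mathcal{H}}$ at distance at most this $\delta$ from $\bar{u}$.
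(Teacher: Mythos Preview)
Your proposal is correct and follows the paper's approach exactly: apply Corollary~\ref{cor:fixed-point} to $T=I-AF$ with $F$ from~\eqref{eqn:F-heat-asym}, transcribe the bounds of Section~\ref{sec:bounds} to the two-index basis of $\tilde{\mathcal{H}}$ with the shifted eigenvalues $\lambda_j=3/2+j$ and the non-radial $w_{k,m}$, and evaluate everything rigorously via the quadrature of Section~\ref{sec:quadrature} combined with interval arithmetic. One small slip in your final paragraph: to place $\underline{\delta}$ below the target you need $P(\delta)<0$ at $\delta\simeq 1.51\times 10^{-3}$ (not $P(\delta)>0$, since $P(0)=Y>0$); the opposite sign in the hypothesis of Corollary~\ref{cor:fixed-point} is a typo in the paper, as comparison with condition~\eqref{eqn:condinto} shows.
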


\begin{proof}
    We consider $F :\tilde{\mathcal{H}}\to\tilde{\mathcal{H}}$ defined in~\eqref{eqn:F-heat-asym}. In the file \texttt{Heat/asymmetric/proof.ipynb} at \cite{Chu2024CodeOn}, we construct $A_n$ (and thus indirectly $A$ and $T$) as outlined in Section~\ref{sec:fixedpoint}, we find the following sufficient bounds:
    \begin{align*}
        Y &= 0.0011482317939412424,\\
        Z_1 &= 0.09887537542580953,\\
        Z_2 &= 236.2041502678645,\\
        Z_3 &= 456.22972624236917,\\
        \bar{\delta} &= 0.003801071181766125,
    \end{align*}
    and we conclude with Corollary~\ref{cor:fixed-point}.
\end{proof}
The relatively large error bound obtained here in Theorem~\ref{thm:asym-heat} (for instance compared to that in Theorem~\ref{thm:intro_asym}), is related to the fact that the decay of the coefficients of the solution is worst in this case, see Figure~\ref{fig:decay}. Therefore, increasing $n$ further would only very slowly decrease the residual error $\|F(\bar{u})\|_{H^2(\mu)}$, while significantly increasing computational cost.

\begin{figure}[h!]
    \centering
\includegraphics[width=0.5\textwidth]{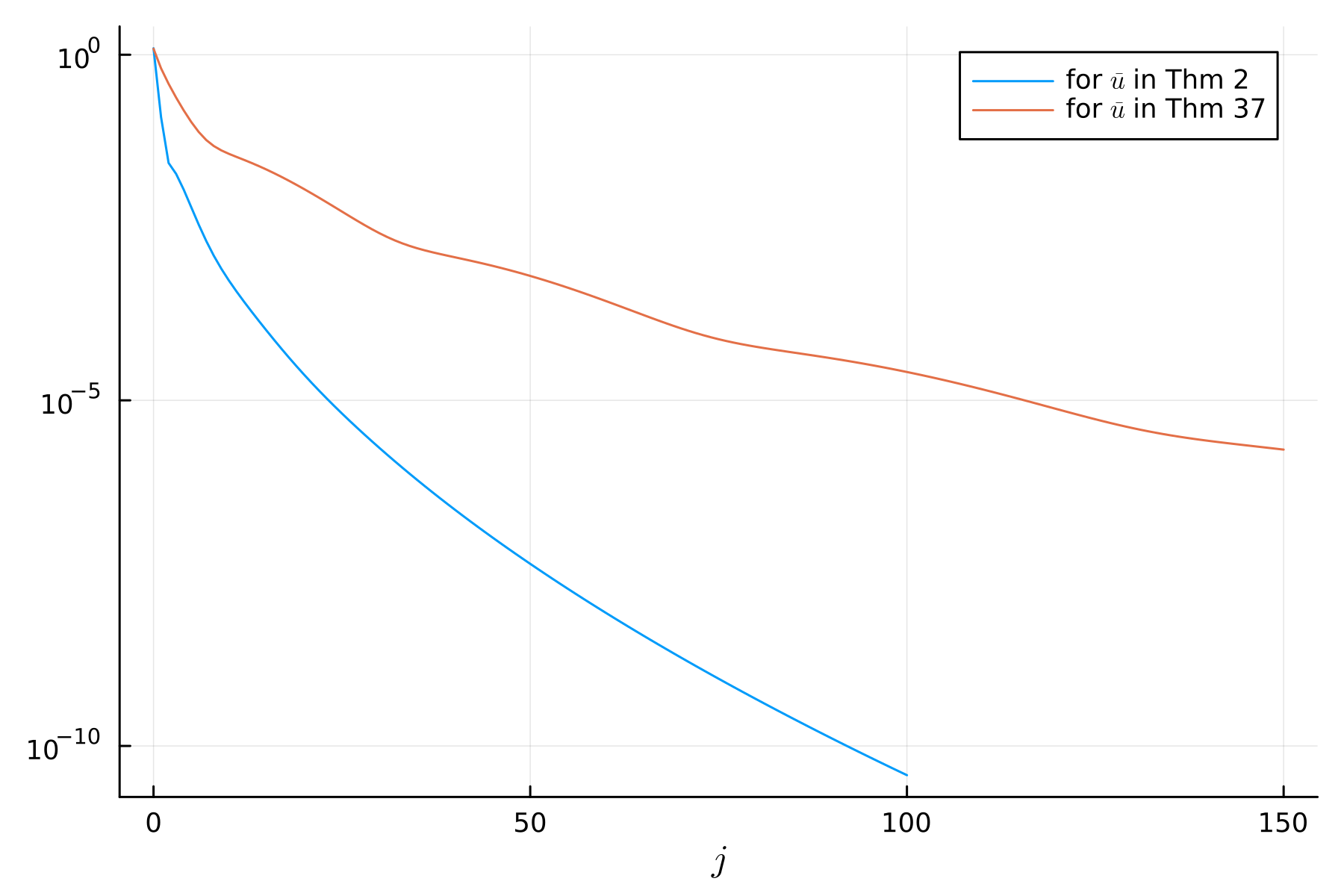}
    \caption{For each $j$, we display here the $L^2(\mu)$-norm of the coefficients of $\bu$ associated with eigenfunctions having eigenvalue $\lambda_j$, that is, $\displaystyle\sqrt{\sum_{k+m=j}|\langle\bu, \hat{\psi}_{2k+1,m}\rangle|^2}$. }
    \label{fig:decay}
\end{figure}

\section{Self-similar solutions of the nonlinear Schr\"{o}dinger equation}
\label{sec:Schrodinger}

Let us now turn to the critical nonlinear Schr\"{o}dinger equation
\begin{equation}
    i\partial_t v + \Delta v +\varepsilon|v|^{4/d}v = 0, \label{eqref:nonlin-schrodinger2}
\end{equation}
for $\varepsilon = \pm 1$. It is known~\cite{Kavian1994Self-similarEquation} that this equation admits self-similar solutions of the form
\begin{equation}
    v(t, x) = (1+t^2)^{-d/4}\exp\left(\frac{it|x|^2}{4(1+t^2)}\right)e^{i\omega\arctan{t}}\varphi\left(\frac{\sqrt{2}x}{(1+t^2)^{1/2}}\right),\label{eqn:schrodinger-transformation}
\end{equation}
for any $\omega\in\R$, as soon as $\varphi : \R^d \longrightarrow \mathbb{C}$ solves
\begin{equation}\label{eqn:phi-schrodinger}
-\Delta \varphi +\frac{|x|^2}{16}\varphi +\frac{\omega}{2} \varphi = \frac{\varepsilon}{2}|\varphi|^{4/d}\varphi.
\end{equation}
Assuming $\varphi(x) = e^{|x|^2/8}u(x)$, we get the equation 
\begin{equation}\label{eqn:schrodinger-u-cart}
    \mathcal{L}u = \left(\frac{d}{4}-\frac{\omega}{2}\right) u +\frac{\varepsilon}{2}e^{(p-1)|x|^2/8}|u|^{4/d}u,
\end{equation}
where $p = 4/d+1$.

\begin{rmk}
    In~\cite{Kavian1994Self-similarEquation}, Eq.~\eqref{eqn:phi-schrodinger} is solved via variational methods for $\varphi$ in the Hilbert space
    $$\mathcal{X} = \left\{ \varphi \in H^1(\R^d) : |x|\varphi \in L^2(\R^d)\right\},$$
    but this is equivalent to solving Eq.~\eqref{eqn:schrodinger-u-cart} for $ u \in H^1(e^{|x|^2/4})$. Furthermore, an enclosure of a solution $u^{\star}$ to~\eqref{eqn:schrodinger-u-cart} around $\bar{u}$ with respect to the $H^2(\mu)$-norm will automatically yield an enclosure for a solution $\varphi^{\star} := e^{|x|^2/8} u^{\star}$ to~\eqref{eqn:phi-schrodinger} around $\bar{\varphi} := e^{|x|^2/8}\bar{u}$ with respect to the $H^2(\R^d)$-norm by inequality~\eqref{eqn:Hessian-ineq}.
\end{rmk}

In the radial case, we thus aim to find a non-trivial zero of
\begin{equation}\label{eqn:F-Schrodinger}
F(u) = u  - \mathcal{L}^{-1}\left(\left(\frac{d}{4}-\frac{\omega}{2}\right)u +\frac{\varepsilon}{2}e^{(p-1)r^2/8}u^p\right).
\end{equation}
We get similar estimates for the radii polynomials coefficients as in Section~\ref{sec:bounds} and take
\begin{align*}
    Y &= \left\{\|\mathcal{L}A_n(P_nF(\bar{u}))\|^2_{L^2(\mu)}+\frac{1}{4}\left(\|e^{(p-1)r^2/8}\bar{u}^p\|^2_{L^2(\mu)}-\|P_n(e^{(p-1)r^2/8}\bar{u}^p)\|^2_{L^2(\mu)}\right)\right\}^{1/2}\\
    Z^{11} &= \|\mathcal{L}(I_n - A_n P_nDF(\bar{u})P_n)\mathcal{L}^{-1}\|_{L^2(\mu),L^2(\mu)}\\
    Z^{21} &=\frac{p}{2}\|\mathcal{L}^{-1}w\|_{L^2(\mu)}\\
    Z^{12} &= \frac{p}{2\lambda_{n+1}}\left\|\,\left|\mathcal{L}A_n \mathcal{L}^{-1}\right|w\right\|_{L^2(\mu)}\\
    Z^{22} &= \frac{\|d/4-\omega/2+\varepsilon p(e^{r^2/8}\bar{u})^{p-1}/2\|_{\infty}}{\lambda_{n+1}}\\
    Z_k &= \frac{p!}{d(p-k)!}C(d)^{k-1}\|\mathcal{L}A\mathcal{L}^{-1}\|_{L^2(\mu),L^2(\mu)}\|e^{r^2/8}\bar{u}\|_{\infty}^{p-k}  \\
    \text{or }Z_k &= \frac{p!}{2(p-k)!}\left(\frac{2}{d}\right)^{k/2}c(d, k)\|\mathcal{L}A\mathcal{L}^{-1}\|_{L^2(\mu),L^2(\mu)}\|e^{r^2/8}\bar{u}\|_{\infty}^{p-k},
\end{align*}
where
$$w_m := \|\OP ((e^{r^2/8}\bar{u})^{p-1}\hat{\psi}_m)\|_{L^2(\mu)} =  \left(\|(e^{r^2/8}\bar{u})^{p-1} \hat{\psi}_m\|^2_{L^2(\mu)}-\|P_n((e^{r^2/8}\bar{u})^{p-1}\hat{\psi}_m)\|^2_{L^2(\mu)}\right)^{1/2}.$$
Similarly, the quadrature rule of Section~\ref{sec:quadrature} can be adapted. For instance, the matrix
    $$G_{ij} = \langle \hat{\psi}_i, e^{(p-1)r^2/8}\bar{u}^{p-1}\hat{\psi}_j\rangle$$
can be written as $G = \bar{V}^{T}\mathrm{Diag}(\bar{V} \bar{\mathbf{u}})^{p-1}\bar{V}$ for $\bar{V}$ as in Section~\ref{sec:quadrature}, with $m =p+1$ and $\beta = (p+1)/2$. One could also study the positivity of our radial solutions by applying Lemma~\ref{lem:max-princ} to Eq.~\eqref{eqn:phi-schrodinger}.

Furthermore, one can find non-radial solutions by adapting the setup and the above bounds analogously to Section~\ref{sec:heat-asym}.

\begin{figure}[h]
\begin{subfigure}{0.5\textwidth}
\includegraphics[width=0.9\linewidth]{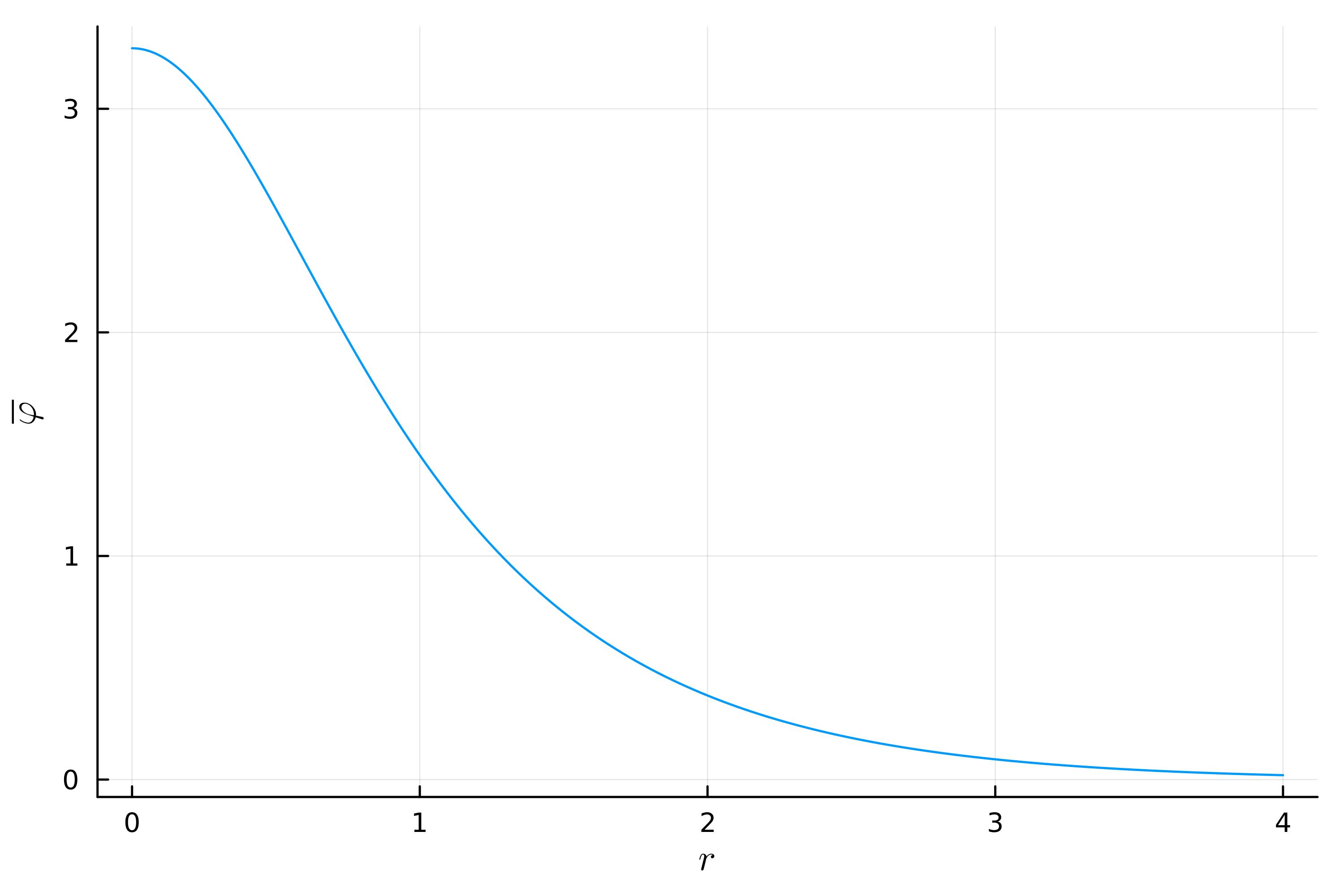} 
\caption{$\varepsilon = +1$, $d = 2$, $ \omega = 2$}
\label{fig:schrodinger-rad-sol-1}
\end{subfigure}
\begin{subfigure}{0.5\textwidth}
\includegraphics[width=0.9\linewidth]{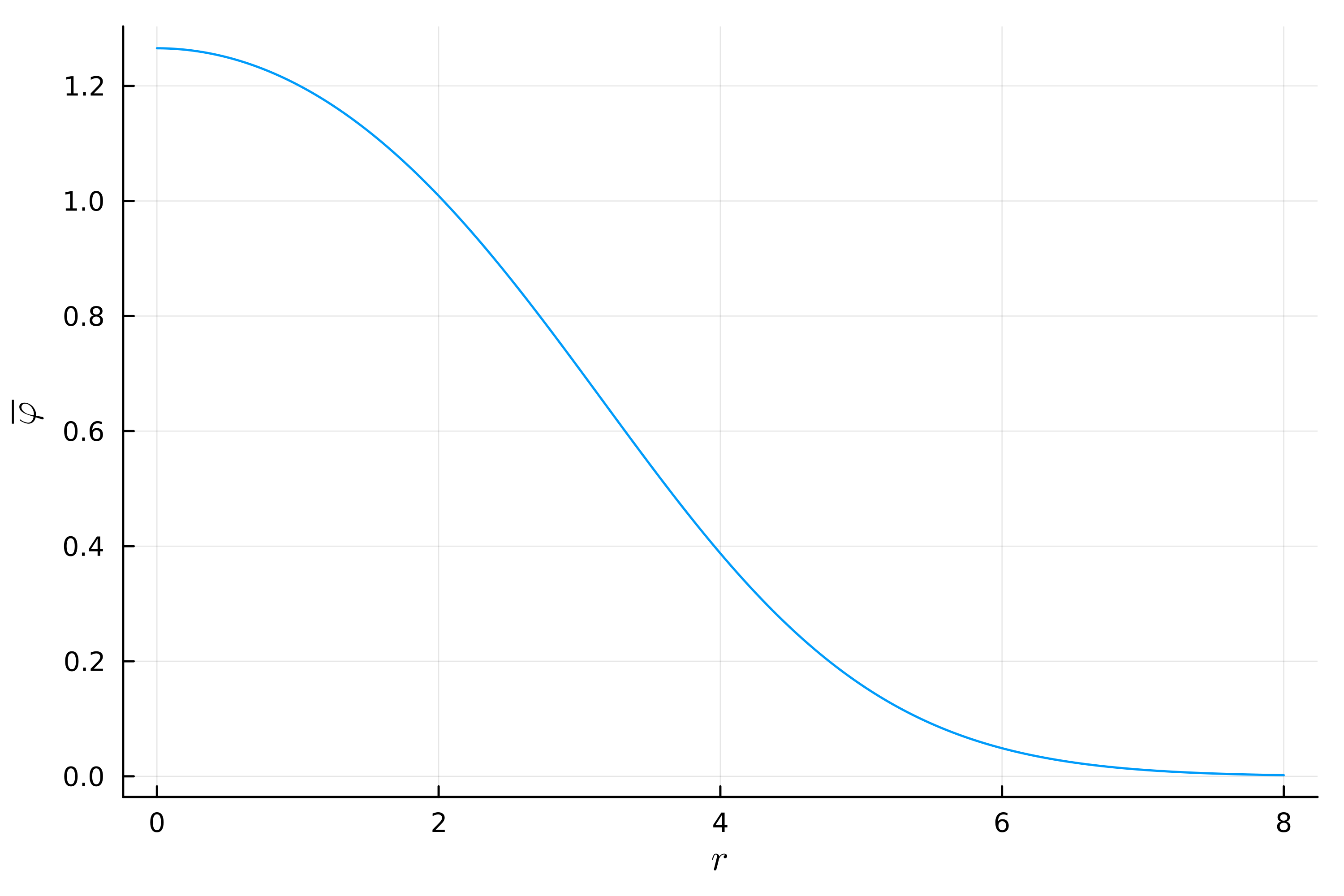} 
\caption{$\varepsilon = -1$, $d = 2$, $ \omega = -2$}
\label{fig:schrodinger-rad-sol-2}
\end{subfigure}

\caption{Plots of the numerical approximations $\bar{\varphi} = e^{|x|^2/8}\bar{u}$ to Eq.~\eqref{eqn:phi-schrodinger}.\label{fig:schrodinger-rad-sol}}
\end{figure}

\begin{table}[H]
\centering
\begin{tabular}{|c|c|c|c|}
    \hline
    & $\varepsilon = +1$, $d = 2$, $ \omega = 2$ & $\varepsilon = -1$, $d = 2$, $ \omega = -2$ & $\varepsilon = -1$, $d = 2$, $ \omega = -5/2$ \\
    \hline
    $Y$ & $1.1097624368731403\times 10^{-7}$ & $1.510794398394117\times 10^{-33}$ & $7.709839958677216\times 10^{-9}$\\
    \hline
    $Z_1$ & $0.015947448213879025$& $0.009444194874814784 $ & $0.04065143211428217$\\
    \hline
    $Z_2$ & $77.79632835336696$& $468.0575302222208 $ & $109.71390801884884$\\
    \hline
    $Z_3$ & $134.26940590853712$& $1264.711434525654 $ & $486.14503594089473$\\
    \hline
    $\underline{\delta}$ & $1.1277557297872954\times 10^{-7} $ & $1.5251987173285673\times 10^{-33}$ & $8.036540516245275\times10^{-9}$\\
    \hline
    $\bar{\delta}$ &$0.007787422383319831$ & $0.0021102955124448573$ & $0.008580958099091766$\\
    \hline
    $\eta$ & $1.8917745266839649\times 10^{-6}$ & $7.152376831616066\times 10^{-33}$ & $ 3.768713253012379\times 10^{-8}$\\
    \hline
    $\mathrm{dim}(P_n\mathcal{H})$ & $501$ & $501$ & $5151$\\
    \hline
    Figure &\ref{fig:schrodinger-rad-sol-1} &\ref{fig:schrodinger-rad-sol-2} & \ref{fig:asymmetric-schrodinger}\\
    \hline
\end{tabular}
\captionsetup{justification=centering}
\caption{Results for the a posteriori validation of the solutions to Eq.~\eqref{eqn:phi-schrodinger} in Theorem~\ref{thm:Schrodinger}.\label{table:schrodigner}}
\end{table}

\begin{thm}
\label{thm:Schrodinger}
    For $d=2$ and $(\varepsilon, \omega) = (+1, 2)$, (resp. $(\varepsilon, \omega) = (-1, -2)$, resp. $(\varepsilon, \omega) = (-1, -5/2)$), let $\bar{\varphi}$ be the function depicted on a compact set in Figure~\ref{fig:schrodinger-rad-sol-1} (resp.~\ref{fig:schrodinger-rad-sol-2}, resp.~\ref{fig:asymmetric-schrodinger}) and whose description in terms of ``Fourier'' coefficients is available at~\cite{Chu2024CodeOn}. There exists a solution $\varphi^{\star}\in H^2(\R^d)$ to Eq.~\eqref{eqn:phi-schrodinger} such that
    $$\|\varphi^{\star} - \bar{\varphi}\|_{H^2(\R^2)}\leq \eta,$$
    where $\eta$ is as in Table~\ref{table:schrodigner}.
\end{thm}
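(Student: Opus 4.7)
The plan is to apply Corollary~\ref{cor:fixed-point} to the map $F$ defined in~\eqref{eqn:F-Schrodinger}, acting on a suitable closed subspace of $H^2(\mu)$, and then transfer the resulting enclosure for $u^\star = e^{-\vert x\vert^2/8}\varphi^\star$ to one for $\varphi^\star$ in $H^2(\R^d)$ via Lemma~\ref{lem:phiH2}. In the two radial cases $(\varepsilon,\omega)=(+1,2)$ and $(-1,-2)$, I would work in the radial subspace $\mathcal{H}\subset H^2(\mu)$ as in Section~\ref{sec:bounds}. In the non-radial case $(\varepsilon,\omega)=(-1,-5/2)$ the $\vartheta$-rotational invariance of~\eqref{eqn:phi-schrodinger} makes local uniqueness fail on all of $H^2(\mu)$, so following the blueprint of Section~\ref{sec:heat-asym} I would instead work in the closed subspace $\tilde{\mathcal{H}}$ spanned by the basis functions $\hat\psi_{2k+1,m}$ from Definition~\ref{def:spherical-basis}, which simultaneously breaks the phase invariance and discards the trivial radial branches.

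Concretely, for each parameter set I would first compute, via a pseudospectral method and Newton iterations, an approximate solution $\bar u\in P_n(H^2(\mu))$ (with $\bar\varphi=e^{\vert x\vert^2/8}\bar u$ depicted in Figures~\ref{fig:schrodinger-rad-sol-1},~\ref{fig:schrodinger-rad-sol-2} and~\ref{fig:asymmetric-schrodinger}), construct the approximate inverse $A$ with $P_nAP_n=A_n\approx(P_nDF(\bar u)P_n)^{-1}$, and then evaluate the bounds $Y$, $Z^{11},Z^{12},Z^{21},Z^{22}$, $Z_2$, $Z_3$ listed just after~\eqref{eqn:F-Schrodinger}. The required integrals have the form~\eqref{eq:intergrals} decorated by an extra factor $e^{(p-1)r^2/8}=e^{r^2/4}$ (since $d=2$, $p=3$); after the standard change of variables $z=r^2/4$ this merely shifts the effective parameter to $\beta=(p+1)/2=2$ in Lemma~\ref{lem:quad}, so the rigorous Gauss--Laguerre quadrature of Section~\ref{sec:quadrature} applies verbatim, and the matrix $G_{ij}=\langle\hat\psi_i,e^{r^2/4}\bar u^2\hat\psi_j\rangle$ can be assembled using the regularised Vandermonde $\bar V$. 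The sup-norm $\|e^{r^2/8}\bar u\|_\infty$ entering $Z^{22}$ and the $Z_k$ is bounded by Lemma~\ref{lem:infty-bound} in the radial cases, and by the angular decomposition together with rigorous polynomial rootfinding of Section~\ref{sec:heat-asym} in the non-radial case. Once the radii polynomial $P$ of Corollary~\ref{cor:fixed-point} is certified positive at some $\delta$, I obtain a fixed point $u^\star$ of $T$ satisfying $\|u^\star-\bar u\|_{H^2(\mu)}\leq\underline\delta$, and $\varphi^\star=e^{\vert x\vert^2/8}u^\star$ inherits an $H^2(\R^2)$ enclosure by bounding each of $\|\varphi^\star-\bar\varphi\|_{L^2(\R^2)}$, $\|\nabla(\varphi^\star-\bar\varphi)\|_{L^2(\R^2)}$, $\|D^2(\varphi^\star-\bar\varphi)\|_{L^2(\R^2)}$ in terms of $\|u^\star-\bar u\|_{H^2(\mu)}$ via~\eqref{eqn:Lions-ineq},~\eqref{eqn:Hessian-ineq} and the Poincaré inequalities~\eqref{eqn:Poincineq}--\eqref{eqn:Poincineq2}; this is how the final constants $\eta$ reported in Table~\ref{table:schrodigner} are produced from $\underline\delta$.

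The main obstacle I expect is the non-radial case $(\varepsilon,\omega)=(-1,-5/2)$. Functionally, the angular symmetry of~\eqref{eqn:phi-schrodinger} must be quotiented out at the level of the ambient space; the choice of odd azimuthal indices $k$ ensures that the nonlinearity $u^3$ preserves the parity structure of the subspace, which is another reason specialising to $d=2$, $p=3$ is convenient. Computationally, with $\dim P_n(\tilde{\mathcal{H}})=5151$ basis functions, the costliest steps are the assembly of $G$ and the numerical diagonalisation of $P_nDF(\bar u)P_n$ used to build $A_n$; without the precomputation trick of Section~\ref{sec:quadrature} that factors $V$ through the diagonal of quadrature weights to yield a well-conditioned $\bar V$, neither the non-rigorous computation of $\bar u$ nor the rigorous evaluation of the $Z^{ij}$ would be feasible in a reasonable amount of time on a standard machine.
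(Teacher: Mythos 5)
Your overall strategy matches the paper's: apply Corollary~\ref{cor:fixed-point} to the map~$F$ from~\eqref{eqn:F-Schrodinger} on a symmetry-reduced subspace of $H^2(\mu)$, compute the $Y$ and $Z_k$ bounds listed after~\eqref{eqn:F-Schrodinger} using the quadrature of Section~\ref{sec:quadrature} (your identification of $\beta=(p+1)/2=2$ is correct), and then convert the resulting $H^2(\mu)$-enclosure for $u^\star$ into an $H^2(\R^2)$-enclosure for $\varphi^\star=e^{|x|^2/8}u^\star$.

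There is, however, a concrete discrepancy in the very last conversion step. You propose to bound $\|\varphi^\star-\bar\varphi\|_{L^2(\R^2)}$, $\|\nabla(\varphi^\star-\bar\varphi)\|_{L^2(\R^2)}$ and $\|D^2(\varphi^\star-\bar\varphi)\|_{L^2(\R^2)}$ \emph{separately} via~\eqref{eqn:Lions-ineq},~\eqref{eqn:Hessian-ineq} and the Poincar\'e inequalities~\eqref{eqn:Poincineq}--\eqref{eqn:Poincineq2}. For $d=2$ each of those three terms is bounded by $\sqrt{Z}\,\|u^\star-\bar u\|_{H^2(\mu)}$, which yields $\|\varphi^\star-\bar\varphi\|_{H^2(\R^2)}\le\sqrt{3Z}\,\underline{\delta}$. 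The paper instead keeps the three terms tied together by invoking the coercivity estimate~\eqref{eqn:full-H2-ineq} from the proof of Lemma~\ref{lem:phiH2},
\begin{equation*}
\|\Delta\varphi\|_{L^2(\R^d)}^2 +\tfrac{d}{2}\|\nabla\varphi\|_{L^2(\R^d)}^2+\tfrac{d}{8}\|\varphi\|_{L^2(\R^d)}^2 \le Z\|u\|_{H^2(\mu)}^2,
\end{equation*}
topping up only the deficit in the zeroth-order coefficient $(1-d/8)$ with the Poincar\'e inequalities, which gives $\|\varphi^\star-\bar\varphi\|_{H^2(\R^2)}\le Z^{1/2}\bigl[1+\max(0,1-\tfrac{d}{8})\tfrac{4}{d^2}\bigr]^{1/2}\underline{\delta}=\tfrac{\sqrt{7Z}}{2}\underline{\delta}$ for $d=2$. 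Since $\sqrt{3Z}\approx 1.31\cdot\tfrac{\sqrt{7Z}}{2}$, your route proves existence but with an $\eta$ roughly 31\% larger than the values reported in Table~\ref{table:schrodigner}; e.g.\ in the case $(\varepsilon,\omega)=(-1,-5/2)$ you would obtain about $4.9\times10^{-8}$ rather than the quoted $3.77\times10^{-8}$. If you want to reproduce the theorem's stated constants, you need the combined inequality~\eqref{eqn:full-H2-ineq}, not the termwise bounds.
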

\begin{proof}
    We consider $F$ defined in~\eqref{eqn:F-Schrodinger}. In the file \texttt{Schrodinger/plus/proof.ipynb} (resp. \newline\texttt{Schrodinger/minus/proof.ipynb}, resp. \texttt{Schrodinger/asymmetric/proof.ipynb}) at \cite{Chu2024CodeOn}, we construct $A_n$ (and thus indirectly $A$ and $T$) as outlined in Section~\ref{sec:fixedpoint} and following the estimates above, we obtain the upper bounds in Table~\ref{table:schrodigner} for $Y$ and the $Z_k$'s to conclude with Corollary~\ref{cor:fixed-point}. Now, observe that from combining~\eqref{eqn:full-H2-ineq} with the Poincaré inequalities~\eqref{eqn:Poincineq} and~\eqref{eqn:Poincineq2}, for $d\geq 2$, for all $u\in H^2(\mu)$
    and $\varphi = e^{|x|^2/8}u$, we have that%
    $$\|\varphi\|_{H^2(\R^d)}^2 := \|D^2\varphi\|_{L^2(\R^d)}^2 +\|\nabla \varphi\|_{L^2(\R^d)}^2+\|\varphi\|_{L^2(\R^d)}^2\leq Z\left[1+\max\left(0,1-\frac{d}{8}\right)\frac{4}{d^2}\right]\| u\|_{H^2(\mu)}^2.$$
    And thus, for $d=2$, we may choose $\eta = (\sqrt{7Z}/2)\underline{\delta} $.
\end{proof}
Note that the third case of Theorem~\ref{thm:Schrodinger} corresponds to Theorem~\ref{thm:intro_asym}.

\section{An example with first order terms}
\label{sec:Burger}

In this final section, we consider a toy problem whose interest lies in the fact that it leads to an equation of the form~\eqref{eqn:general_elliptic_prblm} with a first-order term, i.e. a right-hand side of the form $f(x,u,\nabla u)$ with an actual dependence on $\nabla u$. We focus on a generalised viscous Burgers equation on $\R_+$ with Neumann boundary condition
\begin{equation}\label{eqn:burger2}
    \begin{cases}
        \partial_t v + v^2\partial_x v = \partial_{xx} v,\\
        \partial_x v(t, 0) = 0 \qquad\qquad\quad\mbox{for all $t>0.$}
    \end{cases}
\end{equation}
Then, under the similarity transformation
$$v(t, x) = t^{-1/4}u\left(\frac{x}{\sqrt{t}}\right),$$
$v$ solves Eq.~\eqref{eqn:burger2} for $t>0$ if $u$ solves
\begin{equation}\label{eqn:burger3}
    \mathcal{L}u-\frac{u}{4}+u^2\frac{\partial u}{\partial x} = 0, \qquad x\geq 0.
\end{equation}
Similar problems have been studied in~\cite{Aguirre1990Self-SimilarProblems, SrinivasaRao2003Self-similarDamping} and the references therein.
This equation can be approached with our method, as $u\in H^2(\mu)$ implies that $u\in \mathcal{C}^{1}_0(\R)$ and thus $u^2\partial_x u \in L^2(\mu)$. 
\begin{rmk}
    We deal with a one-dimensional example here for simplicity, but terms like $(u\cdot\nabla)u$ could in principle also be handled in dimension $d\in\{2,3\}$, as $u\in H^2(\mu)$ is then still enough to guarantee that $(u\cdot\nabla)u\in L^2(\mu)$ since $u\in L^{\infty}(\R^d)$.
\end{rmk}
In order to enforce the Neumann boundary condition, we symmetrise the problem and work on the Hilbert space
$$\mathcal{H} 
= \overline{\mathrm{Span}\{\hat{\psi}_m\}_{m=0}^{\infty}}^{H^2} = \overline{\mathrm{Span}\{e^{-x^2/4}\hat{H}_{2m}(x/2)\}_{m=0}^{\infty}}^{H^2},$$
i.e.~we make use of ``even half-Hermite series'' where $\hat{H}_{n} = H_{n}/\sqrt{2^{n}n!\sqrt{\pi}}$ denotes a normalised Hermite polynomial. Note that working on $\mathcal{H}$ is equivalent to working on $H^2(\R_+, e^{x^2/4})$ with the additional Neumann boundary condition (see also~\cite[Section 3]{Weissler1986RapidlyEquations}). Furthermore, for clarity, in what follows, all elements of $L^2(\mu)$ that we will consider should be treated as functions on $\R_+$ having an even extension on $\R$. In particular, for such functions $u$ and $v$, we have that
$$\langle u, v \rangle = \int_{-\infty}^{\infty}uv\mu(x) \d x = \frac{1}{2}\int_{-\infty}^{\infty}uve^{x^2/4} \d x = \int_{0}^{\infty} u v e^{x^2/4}\d x.$$
We consider
\begin{align}
    F:\mathcal{H}&\longrightarrow \mathcal{H} \nonumber\\
    u &\longmapsto u + \mathcal{L}^{-1}\left(-\frac{u}{4} + u^2\partial_x u\right),\label{eqn:F-burger}
\end{align}
and construct our fixed-point operator $T:\mathcal{H}\to \mathcal{H}$ as outlined in~\ref{sec:fixedpoint}. Note that $T$ is still polynomial with respect to $u$ in this case and most bounds for the validation can be derived similarly as in Section~\ref{sec:bounds}:
\begin{align*}
    Y &= \left\{\|\mathcal{L}A_n(P_nF(\bar{u}))\|^2_{L^2(\mu)}+\left(\|\bar{u}^2\partial_x\bar{u}\|^2_{L^2(\mu)}-\|P_n(\bar{u}^2\partial_x\bar{u})\|^2_{L^2}\right)\right\}^{1/2},\\
    Z^{11} &= \|\mathcal{L}(I_n - A_n P_nDF(\bar{u})P_n)\mathcal{L}^{-1}\|_{L^2(\mu),L^2(\mu)},\\
    Z^{21} &=\|\mathcal{L}^{-1}w\|_{L^2(\mu)},\\
    Z^{22} &= \frac{\|1/4+2\bar{u}\partial_x \bar{u}\|_{\infty}}{\lambda_{n+1}} +\frac{\|\bar{u}\|_{\infty}^2}{\sqrt{\lambda_{n+1}}},\\
     Z_2 &= 2^{15/4}\|\mathcal{L}A\mathcal{L}^{-1}\|_{L^2(\mu),L^2(\mu)}(\sqrt{2}\|\bar{u}\|_{\infty} +\|\partial_x \bar{u}\|_{\infty}),\\
     Z_3 &= 96\|\mathcal{L}A\mathcal{L}^{-1}\|_{L^2(\mu),L^2(\mu)},
\end{align*}
with
\begin{align*}
    w_m :&= 2\|\OP(\bar{u}\hat{\psi}_m\partial_x\bar{u})\|_{L^2}+\|\OP  (\bar{u}^2\partial_x\hat{\psi}_m)\|_{L^2}\\
    &=  2\left(\|\bar{u}\hat{\psi}_m\partial_x\bar{u}\|^2_{L^2}-\|P_n(\bar{u}\hat{\psi}_m\partial_x\bar{u})\|^2_{L^2}\right)^{1/2}+  \left(\|\bar{u}^2\partial_x\hat{\psi}_m\|^2_{L^2}-\|P_n(\bar{u}^2\partial_x\hat{\psi}_m)\|^2_{L^2}\right)^{1/2}.
\end{align*}
The only estimate which requires substantial adaptations is $Z^{12}$. For $\ih \in \iH$, we get
\begin{align}
\label{eq:Z12_Burger}
    \langle 2 \bar{u}\partial_x\bar{u} \ih + \bar{u}^2\partial_x \ih,  \hat{\psi}_m \rangle &= \langle \partial_x (\bar{u}^2\ih), \hat{\psi}_m\rangle \nonumber\\
    &=\int_0^{\infty}\partial_x(\bar{u}^2\ih)\hat{\psi}_m e^{x^2/4}\mathrm{d} x \nonumber\\
    &= \bar{u}(0)^2\ih(0)\hat{\psi}_m(0) - \int_0^{\infty}\bar{u}^2\ih\partial_x(\hat{\psi}_m e^{x^2/4})\mathrm{d} x \qquad \mbox{by parts.}
\end{align}
We deal with both terms appearing in~\eqref{eq:Z12_Burger} separately. First, writing $\ih$ as 
$$ \ih = \sum_{m = n+1}^{\infty}a_m \hat{\psi}_m,$$
we have that
\begin{align*}
    \ih(0)^2 &= \left(\sum_{m = n+1}^{\infty}a_m \hat{\psi}_m(0)\right)^2\\
    &\leq\left(\sum_{m = n+1}^{\infty}\lambda_m^2a_m^2\right)\left(\sum_{m = n+1}^{\infty}\frac{\hat{\psi}_m(0)^2}{\lambda_m^2}\right)\qquad \mbox{by Cauchy--Schwarz inequality}\\
    &=\frac{\|\ih\|_{H^2(\mu)}^2}{\Gamma(1/2)^2}\sum_{m=n+1}^{\infty}\frac{\Gamma(m+1/2)}{(m+1/2)^2m!} \qquad\mbox{by~\eqref{eqn:tail-estimate}}\\
    &= \frac{\|\ih\|_{H^2(\mu)}^2}{\pi}\left(\sum_{m=0}^{\infty}\frac{\Gamma(m+1/2)}{(m+1/2)^2m!}-\sum_{m=0}^{n}\frac{\Gamma(m+1/2)}{(m+1/2)^2m!}\right)\\
    &\leq \frac{\|\ih\|_{H^2(\mu)}^2}{\pi}\left(\pi^{3/2}\log{4}-\sum_{m=0}^{n}\frac{\Gamma(m+1/2)}{(m+1/2)^2m!}\right) \\
    &\leq \|\ih\|_{H^2(\mu)}^2S_n,
\end{align*}
where
$$ S_n = \frac{1}{\pi}\left(\pi^{3/2}\log{4}-\sum_{m=0}^{n}\frac{\Gamma(m+1/2)}{(m+1/2)^2 m!}\right).$$
Now, for the integral term in~\eqref{eq:Z12_Burger},%
\begin{align*}
        \left|\int_0^{\infty}\bar{u}^2\ih\partial_x(\hat{\psi}_m e^{x^2/4})\mathrm{d} x\right|&=\left|\int_0^{\infty}\bar{u}^2\ih\left(\partial_x\hat{\psi}_m +\frac{x}{2}\hat{\psi}_m\right) e^{x^2/4}\mathrm{d} x\right|\\
        &\leq \left|
        \left\langle \ih, \bar{u}^2\left(\partial_x\hat{\psi}_m +\frac{x}{2}\hat{\psi}_m\right) \right\rangle\right|\\
        &\leq \left|
        \left\langle \ih, \OP\left(\bar{u}^2\left(\partial_x\hat{\psi}_m +\frac{x}{2}\hat{\psi}_m\right)\right) \right\rangle\right|\qquad \mbox{as $\ih\in \iH$}\\
        & \leq\|\ih\|_{L^2(\mu)}\left\|\OP\left(\bar{u}^2\left(\partial_x\hat{\psi}_m +\frac{x}{2}\hat{\psi}_m\right)\right) \right\|_{L^2(\mu)}\\
        &\leq\frac{\|\ih\|_{H^2(\mu)}}{\lambda_{n+1}}\left(\left\|\bar{u}^2\left(\partial_x\hat{\psi}_m +\frac{x}{2}\hat{\psi}_m\right)\right\|_{L^2(\mu)}^2- \left\|P_{n}\left(\bar{u}^2\left(\partial_x\hat{\psi}_m +\frac{x}{2}\hat{\psi}_m\right)\right) \right\|_{L^2(\mu)}^2\right)^{1/2}.
\end{align*}
Thus, with
$$\tilde{w}_m = \bar{u}(0)^2\hat{\psi}_m(0)\sqrt{S_n}+\frac{1}{\lambda_{n+1}}\left(\left\|\bar{u}^2\left(\partial_x\hat{\psi}_m +\frac{x}{2}\hat{\psi}_m\right)\right\|_{L^2(\mu)}^2- \left\|P_{n}\left(\bar{u}^2\left(\partial_x\hat{\psi}_m +\frac{x}{2}\hat{\psi}_m\right)\right) \right\|_{L^2(\mu)}^2\right)^{1/2},$$
we can choose
$$Z^{12} = \left\|\,\left|\mathcal{L}A_n \mathcal{L}^{-1}\right|\tilde{w}\right\|_{L^2(\mu)}.$$
An estimate based on Parseval's identity on $H^1(\mu)$ is also possible, but gives similar results.

\begin{rmk}
Note that if $u(x) = q(x)e^{-x^2/4}$ for a polynomial $q$, then $\partial_x u(x) = (\partial_x q(x) -x/2q(x))e^{-x^2/4}$ and thus the integrals necessary to compute our bounds can also be evaluated with quadrature rules as in Section~\ref{sec:quadrature}.
Concerning $L^{\infty}$-bounds, from~\cite[\href{https://dlmf.nist.gov/18.14.E9}{(18.14.9)}]{NIST:DLMF}, we have that for all $m\in\N$
\begin{align*}
    \|\hat{\psi}_m\|_{\infty}\leq\|\hat{\psi}_m e^{x^2/8}\|_{\infty} = \|e^{-x^2/8}\hat{H}_{2m}(x/2)\|_{\infty}\leq \pi^{-1/4},
    \end{align*}
    and for all $m\in\N^*$
    \begin{align*}\|\partial_x\hat{\psi}_m\|_{\infty}\leq \left\|\sqrt{m}e^{-x^2/4}\hat{H}_{2m-1}(x/2) - \frac{x}{2}\hat{\psi}_m\right\|_{\infty}\leq \pi^{-1/4}\left(\sqrt{m}+\frac{1}{\sqrt{e}}\right)\qquad \mbox{as $\left\|\dfrac{x}{2}e^{-x^2/8}\right\|_{\infty}=\dfrac{1}{\sqrt{e}}$}.
\end{align*}
\end{rmk}
With $n=1500$, we find the approximate solution $\bar{u}$ represented in Figure~\ref{fig:burger}.
\begin{figure}[H]
    \centering
    \includegraphics[width=0.6\textwidth]{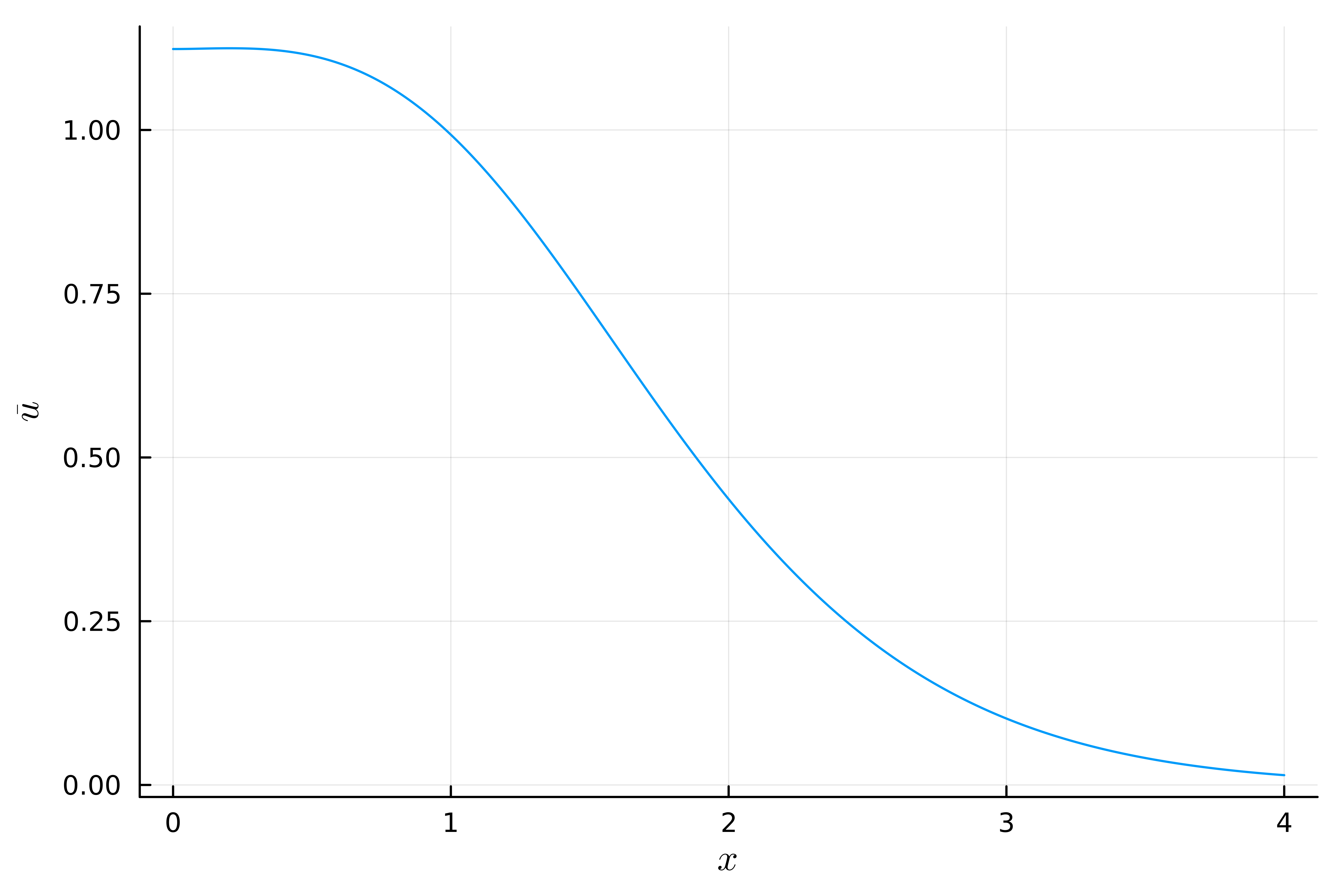}
    \caption{The numerical solution $\bar{u}$ to Eq.~\eqref{eqn:burger3}.\label{fig:burger}}
\end{figure}

\begin{thm}
\label{thm:Burgers}
    Let $\bar{u}:\R_+\to\R$ be the function represented on the interval $[0, 4]$ in Fig~\ref{fig:burger} and whose description in terms of ``Fourier'' coefficients in $\{\hat{\psi}_m\}_{m=0}^n$ is available at~\cite{Chu2024CodeOn}, then there exists a positive solution $u^{\star}\in\mathcal{H}$ to Eq.~\eqref{eqn:burger3} such that
    $$\|u^{\star} - \bar{u}\|_{H^2(\mu)} \leq 10^{-3}.$$
\end{thm}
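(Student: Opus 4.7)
The plan is to apply Corollary~\ref{cor:fixed-point} to the quasi-Newton operator $T=I-AF$ with $F:\mathcal{H}\to\mathcal{H}$ defined in~\eqref{eqn:F-burger}. Since $F$ is polynomial of degree three in $u$ (due to the cubic term $u^2\partial_x u$), I would assemble $Z_1$ from the $2\times 2$ block structure of Section~\ref{sec:boundsZ1} using the $Z^{ij}$'s, and complement it with the bounds $Z_2$ and $Z_3$ controlling the quadratic and cubic pieces of the Taylor expansion of $T$ around $\bar{u}$. The starting point is to numerically compute the approximate solution $\bar{u}\in\mathcal{H}_n$ with $n=1500$ depicted in Figure~\ref{fig:burger}, together with the approximate inverse $A_n$ of $P_n DF(\bar{u})P_n$.

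Then, using the quadrature of Lemma~\ref{lem:quad} (straightforwardly adapted to handle the first-order term, since for $u(x)=q(x)e^{-x^2/4}$ one has $\partial_x u(x)=(\partial_x q(x)+x q(x)/2)e^{-x^2/4}$), I would rigorously evaluate via interval arithmetic the quantities $Y$, $Z^{11}$, $Z^{21}$, $Z^{22}$, $Z_2$ and $Z_3$ using the explicit expressions in Section~\ref{sec:Burger}. The bound $Z^{12}$ is the delicate one: integration by parts in~\eqref{eq:Z12_Burger} generates a boundary contribution $\bar{u}(0)^2\ih(0)\hat{\psi}_m(0)$ whose pointwise factor $\ih(0)$ is controlled by the closed-form tail sum $S_n$ derived from Szeg\"{o}'s estimate~\eqref{eqn:tail-estimate}. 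Once the radii polynomials are verified to admit a positive root below $\bar{\delta}$, Corollary~\ref{cor:fixed-point} produces a unique $u^\star\in\mathcal{H}$ with $\Vert u^\star-\bar{u}\Vert_{H^2(\mu)}\leq\underline{\delta}$, and one checks numerically that $\underline{\delta}\leq 0.0009686954373860599$.

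For the positivity claim, I would mimic the strategy of Section~\ref{sec:positivity} via the change of unknown $\varphi=e^{x^2/8}u^\star$. A short computation based on~\eqref{eqn:unknown-change} shows that $\varphi$ satisfies an equation of the form $-\partial_{xx}\varphi+b(x)\partial_x\varphi+c(x)\varphi=0$ on $\R_+$, where $b(x)=u^\star(x)^2$ and $c(x)=x^2/16-(x/4)\,u^\star(x)^2$ are continuous functions of $x$. The one-dimensional embedding $H^2(\mu)\hookrightarrow L^\infty(e^{x^2/8}\,\d x)$, obtained by combining~\eqref{eqn:infty-estimate} with~\eqref{eqn:Poincineq2}, yields $u^\star(x)=\mathcal{O}(e^{-x^2/8})$ together with an explicit $L^\infty$-bound on $e^{x^2/8}u^\star$, and hence an explicit radius $r_0$ beyond which $c>0$. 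Positivity of $u^\star$ on $[0,r_0]$ is then checked by interval evaluation of $\bar{u}$ together with the pointwise control of $u^\star-\bar{u}$ provided by the same embedding; combined with $\varphi(x)\to 0$ as $x\to\infty$ (via $\varphi\in H^1(\R_+)$ and~\cite[Corollary VIII.8]{Brezis2011FunctionalEquations}), a maximum principle argument analogous to Lemma~\ref{lem:max-princ} then concludes.

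The main obstacle is the rigorous treatment of $Z^{12}$: the presence of a derivative in the nonlinearity forces, through integration by parts, a boundary term involving pointwise values at the origin that cannot be controlled by a naive Cauchy--Schwarz in $L^2(\mu)$, which is precisely why the explicit tail sum $S_n$ is essential. A subsidiary subtlety is that the standard version of Lemma~\ref{lem:max-princ} assumes $b$ and $c$ are given \emph{a priori}, whereas here they are only known through $u^\star$; this causes no issue because once the fixed point is produced, $b$ and $c$ become fixed continuous functions of $x$ alone, and the strong maximum principle argument applies verbatim.
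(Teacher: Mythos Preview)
Your proposal is correct and follows essentially the same route as the paper: apply Corollary~\ref{cor:fixed-point} to $T=I-AF$ with $F$ from~\eqref{eqn:F-burger}, assemble $Y$, $Z_1$ (via the block bounds $Z^{ij}$, including the integration-by-parts treatment of $Z^{12}$ with the tail sum $S_n$), $Z_2$, $Z_3$ as derived in Section~\ref{sec:Burger}, and then deduce positivity from the maximum principle applied to $\varphi=e^{x^2/8}u^\star$ with $b=u^{\star 2}$ and $c=x^2/16-(x/4)u^{\star 2}$. The only cosmetic difference is that the paper obtains the $\mathcal{C}^2$ regularity of $u^\star$ (needed for Lemma~\ref{lem:max-princ}) by simply observing that~\eqref{eqn:burger3} is an ODE with smooth vector field, whereas you leave this step implicit.
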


\begin{proof}
    We consider $F :\mathcal{H}\to\mathcal{H}$ defined in~\eqref{eqn:F-burger}. In the file \texttt{Burger/proof.ipynb} at \cite{Chu2024CodeOn}, we construct $A_n$ (and thus indirectly $A$ and $T$) as outlined in Section~\ref{sec:fixedpoint} and following the estimates above, we find the following sufficient bounds:
    \begin{align*}
        Y &= 0.00075636391,\\
        Z_1 &= 0.065135932,\\
        Z_2 &= 343.3917,\\
        Z_3 &= 556.478,\\
        \bar{\delta} &= 0.00271646316,
    \end{align*}
    and we conclude with Corollary~\ref{cor:fixed-point} to the existence of a solution $u^{\star}\in\mathcal{H}$ to Eq.~\eqref{eqn:burger3}. As Eq.~\eqref{eqn:burger3} can be seen as an ordinary differential equation with a smooth vector field, we immediately get that $u^{\star}\in \mathcal{C}^{\infty}(\R_+)$ and thus we can apply Lemma~\ref{lem:max-princ} (again to $\varphi = e^{|x|^2/8}u^{\star}$) with $b(x) = u^{\star 2}(x)$ and $c(x)= x^2/16 -xu^{\star 2}(x)/4$, to get that $u^{\star}$ is positive.
\end{proof}

\section*{Acknowledgments}

The authors thank Miguel Escobedo, Olivier Hénot, Marc Nualart Batalla and Sheehan Olver for useful discussions.

This work was conducted as part of HC's doctoral research at Imperial College, supported by a scholarship from the Imperial College London EPSRC DTP in Mathematical Sciences (EP/W523872/1) and by the EPSRC Centre for Doctoral Training in Mathematics of Random Systems: Analysis, Modelling and Simulation (EP/S023925/1).
MB was supported by the ANR project CAPPS: ANR-23-CE40-0004-01, and also acknowledges the hospitality of the Department of Mathematics of Imperial College London. MB and HC are also grateful to the CNRS--Imperial \emph{Abraham de Moivre} IRL for supporting their research.

\printbibliography[heading=bibintoc]

\end{document}